\documentclass{article}

\usepackage{amssymb, amsmath, amsthm}
\usepackage{dsfont, mathrsfs, color, bm, enumitem, url, xr}
\usepackage{subfigure, graphicx, transparent} 
\usepackage{multirow, titlecaps, accents, epstopdf}
\usepackage{caption}

\usepackage{algorithm, algorithmic}

\theoremstyle{definition}
\newtheorem{theorem}{Theorem}[section]
\newtheorem{definition}[theorem]{Definition}

\newtheorem{proposition}[theorem]{Proposition}

\newtheorem{lemma}[theorem]{Lemma}
\newtheorem{remark}[theorem]{Remark}
\newtheorem{example}[theorem]{Example}

\usepackage[utf8]{inputenc}    
\usepackage[T1]{fontenc}       
\usepackage{hyperref}          
\usepackage{url}               
\usepackage{booktabs}          
\usepackage{amsfonts}          
\usepackage{nicefrac}          
\usepackage{microtype}         
\PassOptionsToPackage{numbers, compress}{natbib}
\makeatletter
\def\munderbar#1{\underline{\sbox\tw@{$#1$}\dp\tw@\z@\box\tw@}}
\makeatother

\newcommand{\be}{\begin{equation}}
\newcommand{\ee}{\end{equation}}
\newcommand{\bea}{\begin{equation*}\begin{aligned}}
\newcommand{\eea}{\end{aligned}\end{equation*}}

\newcommand{\Sup}{\sup\limits_}
\newcommand{\Inf}{\inf\limits_}
\newcommand{\Tr}[1]{\Trace \left[ #1 \right]}

\newcommand{\mc}{\mathcal}
\newcommand{\mbb}{\mathbb}
\newcommand{\inner}[2]{\big \langle #1, #2 \big \rangle }
\newcommand{\norm}[1]{\big\| #1\big\| }
\newcommand{\cov}{\Sigma} 


\newcommand{\Ambi}{\mc P} 




\DeclareMathOperator{\Trace}{Tr}

\DeclareMathOperator{\dd}{d}
\DeclareMathOperator{\st}{s.t.}

\newcommand{\PSD}{\mathbb{S}_{+}} 
\newcommand{\PD}{\mathbb{S}_{++}} 

\newcommand{\eps}{\varepsilon}
\newcommand{\ra}{\rightarrow}

\newcommand{\EE}{\mathds{E}}

\newcommand{\half}{\frac{1}{2}}

\newcommand{\Hc}{\mathcal{H}}
\newcommand{\Sc}{\mathcal{S}}
\newcommand{\Nc}{\mathcal{N}}
\newcommand{\Oc}{\mathcal{O}}
\newcommand{\Cc}{\mathcal{C}}

\newcommand{\Vc}{\mathcal{V}}

\newcommand{\Lc}{\mathcal{L}}
\newcommand{\Pc}{\mathcal{P}}
\newcommand{\PP}{\mathbb{P}}
\newcommand{\QQ}{\mathbb{Q}}
\newcommand{\RR}{\mathbb{R}}
\newcommand{\opt}{^\star}
\newcommand{\diff}{\mathrm{d}}

\DeclareMathOperator{\vect}{vec}
\newcommand{\ubar}[1]{\underaccent{\bar}{#1}}

\newcommand{\Let}{\triangleq}


\usepackage[preprint]{nips_2018}

\title{Wasserstein Distributionally Robust Kalman Filtering}

\author{
	Soroosh Shafieezadeh-Abadeh \qquad Viet Anh Nguyen \qquad Daniel Kuhn \\
	\'Ecole Polytechnique F\'ed\'erale de Lausanne, CH-1015 Lausanne, Switzerland \\
	\texttt{ \{soroosh.shafiee,viet-anh.nguyen,daniel.kuhn\} @epfl.ch } 
	\AND
	Peyman Mohajerin Esfahani \\
	Delft Center for Systems and Control, TU Delft, The Netherlands \\
	\texttt{P.MohajerinEsfahani@tudelft.nl} 	
}

\begin{document}
	
	\maketitle	
	
	\begin{abstract}
	We study a distributionally robust mean square error estimation problem over a nonconvex Wasserstein ambiguity set containing only normal distributions. We show that the optimal estimator and the least favorable distribution form a Nash equilibrium. Despite the non-convex nature of the ambiguity set, we prove that the estimation problem is equivalent to a tractable convex program. We further devise a Frank-Wolfe algorithm for this convex program whose direction-searching subproblem can be solved in a quasi-closed form. Using these ingredients, we introduce a distributionally robust Kalman filter that hedges against model risk.
	\end{abstract}

	\section{Introduction}
	
	The Kalman filter is the workhorse for the online tracking and estimation of a dynamical system's internal state based on indirect observations \cite{anderson1979optimal}. It has been applied with remarkable success in areas as diverse as automatic control, brain-computer interaction, macroeconomics, robotics, signal processing, weather forecasting and many more. The classical Kalman filter critically relies on the availability of an accurate state-space model and is therefore susceptible to model risk. This observation has led to several attempts to robustify the Kalman filter against modeling errors.

	The $\Hc_\infty$-filter targets situations in which the statistics of the noise process is uncertain and where one aims to minimize the worst case instead of the variance of the estimation error \cite{bacsar2008h, zhou1996robust}. This filter bounds the $\Hc_\infty$-norm of the transfer function that maps the disturbances to the estimation errors. However, in transient operation, the desired $\Hc_\infty$-performance is lost, and the filter may diverge unless some (typically restrictive) positivity condition holds in each iteration. In set-valued estimation the disturbance vectors are modeled through bounded sets such as ellipsoids \cite{bertsekas1971recursive, shtern2016semi}. In this framework, one attempts to construct the smallest ellipsoids around the state estimates that are consistent with the observations and the exogenous disturbance ellipsoids. However, the resulting robust filters ignore any distributional information and thus have a tendency to be over-conservative. A filter that is robust against more general forms of (set-based) model uncertainty was first studied in \cite{sayed2001framework}. This filter iteratively minimizes the worst-case mean square error across all models in the vicinity of a nominal state space model. While performing well in the face of large uncertainties, this filter may be too conservative under small uncertainties. A generalized Kalman filter that addresses this shortcoming and strikes the balance between nominal and worst-case performance has been proposed in \cite{xu2009kalman}. A risk-sensitive Kalman filter is obtained by minimizing the moment-generating function instead of the mean of the squared estimation error \cite{speyer1992optimal}. This risk-sensitive Kalman filter is equivalent to a distributionally robust filter proposed in \cite{levy2013robust}, which minimizes the worst-case mean square error across all joint state-output distributions in a Kullback-Leibler (KL) ball around a nominal distribution. Extensions to more general $\tau$-divergence balls are investigated in \cite{zorzi2017robust}.

	In this paper we use ideas from distributionally robust optimization 
	to design a Kalman-type filter that is immunized against model risk. Specifically, we assume that the joint distribution of the states and outputs is uncertain but known to reside in a given {\em ambiguity set} that contains all distributions in the proximity of the nominal distribution generated by a nominal state-space model. The ambiguity set thus reflects our level of (dis)trust in the nominal model. We then construct the most accurate filter under the least favorable distribution in this set. The hope is that hedging against the worst-case distribution has a regularizing effect and will lead to a filter that performs well under the unknown true distribution. Distributionally robust filters of this type have been studied in~\cite{eldar2005competitive, ning2015linearmodels} using uncertainty sets for the covariance matrix of the state vector and in~\cite{levy2013robust, zorzi2017robust} using ambiguity sets defined via information divergences. Inspired by recent progress in data-driven distributionally robust optimization \cite{esfahani2015data}, we construct here the ambiguity set as a ball around the nominal distribution with respect to the type-2 Wasserstein distance. The Wasserstein distance has seen widespread application in machine learning \cite{arjovsky2017wasserstein, cuturi2014ground, rolet2016fast}, and an intimate relation between regularization and Wasserstein distributional robustness has been discovered in \cite{shafieezadeh2015distributionally, shafieezadeh2017regularization, sinha2018certifiable, nguyen2018distributionally}. Also, the Wasserstein distance is known to be more statistically robust than other information divergences~\cite{chen2016distance}.	

	
	We summarize our main contributions as follows:
	
	\begin{itemize}[leftmargin=*]
		\item We introduce a distributionally robust mean square estimation problem over a nonconvex Wasserstein ambiguity set containing {\em normal} distributions only, and we demonstrate that the optimal estimator and the least favorable distribution form a Nash equilibrium. 
		\item Leveraging modern reformulation techniques from \cite{nguyen2018distributionally}, we prove that this problem is equivalent to a tractable convex program---despite the nonconvex nature of the underlying ambiguity set---and that the optimal estimator is an affine function of the observations.
		\item We devise an efficient Frank-Wolfe-type first-order method inspired by \cite{jaggi2013revisiting} to solve the resulting convex program. We show that the direction-finding subproblem can be solved in quasi-closed form, and we derive the algorithm's convergence rate.
		\item We introduce a Wasserstein distributionally robust Kalman filter that hedges against model risk. The filter can be computed efficiently by solving a sequence of robust estimation problems via the proposed Frank-Wolfe algorithm. Its performance is validated on standard test instances.
	\end{itemize}
	
	All proofs are relegated to Appendix~A, and additional numerical results are reported in Appendix~B.
	
	\paragraph{Notation:} For any $A \in \RR^{d \times d}$ we use $\Tr{A}$ to denote the trace and $\| A \|$ to denote the spectral norm of $A$. By slight abuse of notation, the Euclidean norm of $v \in \RR^d$ is also denoted by $\| v \|$.
	Moreover, $I_d$ stands for the identity matrix in $\RR^{d\times d}$. For any $A, B \in \RR^{d\times d}$, we use $\inner{A}{B} = \Tr{A^\top B}$ to denote the trace inner product.
	The space of all symmetric matrices in $\RR^{d\times d}$ is denoted by $\mathbb S^d$. We use $\PSD^d$ ($\PD^d$) to represent the cone of symmetric positive semidefinite (positive definite) matrices in $\mathbb S^d$.
	For any $A, B \in \mathbb S^d$, the relation $A \succeq B$ ($A \succ B$) means that $A - B \in \PSD^d$ ($A - B \in \PD^d$). Finally, the set of all normal distribution on $\RR^d$ is denoted by $\Nc_d$.

	\section{Robust Estimation with Wasserstein Ambiguity Sets}
	Consider the problem of estimating a signal $x \in \RR^{n}$ from a potentially noisy observation $y \in \RR^{m}$. In practice, the joint distribution of $x$ and $y$ is never directly observable and thus fundamentally uncertain. This distributional uncertainty should be taken into account in the estimation procedure. In this paper, we model distributional uncertainty through an {\em ambiguity set} $\Pc$, that is, a family of distributions on $\RR^d$, $d=n+m$, that are sufficiently likely to govern~$x$ and~$y$ in view of the available data or that are sufficiently close to a prescribed nominal distribution. We then seek a robust estimator that minimizes the worst-case mean square error across all distributions in the ambiguity set. In the following, we propose to use the Wasserstein distance in order to construct ambiguity sets.
	
	\begin{definition}[Wasserstein distance]
		\label{definition:wasserstein}
		The type-2 Wasserstein distance between two distributions $\QQ_1$ and $\QQ_2$ on $\RR^d$ is defined as
		\begin{equation}
		\label{eq:Wasserstein}
		W_2(\QQ_1, \QQ_2) \Let \inf_{\pi \in \Pi(\QQ_1, \QQ_2)} \left\{
		\left(
		\int_{\RR^d\times\RR^d} \| z_1 - z_2 \|^2\, \pi(\dd z_1, \dd z_2) 
		\right)^{\frac{1}{2}} \right\},
		\end{equation}
		where $\Pi(\QQ_1, \QQ_2)$ is the set of all probability distributions on $\RR^d\times\RR^d$ with marginals $\QQ_1$ and~$\QQ_2$.
	\end{definition}
	
	\begin{proposition}[{\cite[Proposition~7]{givens1984class}}]
		\label{prop:givens}
		The type-2 Wasserstein distance between two normal distributions $\QQ_1 = \Nc_d(\mu_1, \cov_1)$ and $\QQ_2 = \Nc_d(\mu_2, \cov_2)$ with $\mu_1, \mu_2 \in \RR^d$ and $\cov_1, \cov_2 \in \PSD^d$ equals
		\begin{equation*}
		W_2(\QQ_1, \QQ_2) = \sqrt{\norm{\mu_1 - \mu_2}^2 + \Tr{\cov_1 + \cov_2 - 2 \left( \cov_2^\half \cov_1 \cov_2^\half \right)^\half} }.
		\end{equation*}
	\end{proposition}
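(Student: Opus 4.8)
The plan is to recast the Wasserstein infimum over couplings as a finite-dimensional semidefinite program over the cross-covariance matrix and then solve that program in closed form. First I would fix an arbitrary coupling $\pi \in \Pi(\QQ_1, \QQ_2)$, with the marginal $z_i$ having mean $\mu_i$ and covariance $\cov_i$, and expand the transportation cost as
\begin{equation*}
\EE_\pi \norm{z_1 - z_2}^2 = \norm{\mu_1 - \mu_2}^2 + \Tr{\cov_1} + \Tr{\cov_2} - 2\Tr{C},
\end{equation*}
where $C \Let \EE_\pi[(z_1 - \mu_1)(z_2 - \mu_2)^\top]$ is the cross-covariance induced by $\pi$. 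The mean term decouples cleanly because, after centering $z_i = \mu_i + \tilde z_i$, the cross term is linear in $\tilde z_i$ and vanishes in expectation. Hence the cost depends on $\pi$ only through its second-moment data, and minimizing $W_2^2$ reduces to maximizing $\Tr{C}$ over all cross-covariances $C$ attainable by some coupling.

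Next I would characterize the attainable $C$. On the one hand, for any coupling the full second-moment matrix $\left(\begin{smallmatrix}\cov_1 & C\\ C^\top & \cov_2\end{smallmatrix}\right)$ is a genuine covariance matrix and hence positive semidefinite. On the other hand, given any $C$ rendering this block matrix positive semidefinite, the normal law on $\RR^{2d}$ with mean $(\mu_1,\mu_2)$ and that covariance is a valid coupling of $\QQ_1$ and $\QQ_2$, since its marginals are exactly $\Nc_d(\mu_1,\cov_1)$ and $\Nc_d(\mu_2,\cov_2)$. Consequently the infimum equals the value of
\begin{equation*}
\max_{C}\ \Tr{C}\quad \st \quad \begin{pmatrix}\cov_1 & C\\ C^\top & \cov_2\end{pmatrix}\succeq 0,
\end{equation*}
and it is attained by a Gaussian coupling. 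I expect this realizability argument---that no non-Gaussian coupling can beat the best Gaussian one, and that every feasible $C$ is genuinely achievable---to be the conceptual crux of the proof.

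Finally I would solve this program. Assuming first $\cov_1,\cov_2 \succ 0$, a Schur-complement argument turns the constraint into $\cov_1 \succeq C\cov_2^{-1}C^\top$. Substituting $C = \cov_1^\half O \cov_2^\half$ reduces the constraint to $\norm{O}\le 1$ in spectral norm, while the objective becomes $\Tr{\cov_2^\half \cov_1^\half O}$. By von Neumann's trace inequality this linear functional is maximized over the spectral-norm ball by aligning $O$ with the singular vectors of $K \Let \cov_2^\half \cov_1^\half$, giving optimal value equal to the sum of the singular values of $K$, namely $\Tr{(KK^\top)^\half} = \Tr{(\cov_2^\half \cov_1 \cov_2^\half)^\half}$. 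Substituting $2\Tr{C\opt} = 2\Tr{(\cov_2^\half\cov_1\cov_2^\half)^\half}$ into the expansion of the first paragraph yields the claimed formula. The remaining case of singular $\cov_i \in \PSD^d$ then follows by a standard perturbation argument, replacing $\cov_i$ by $\cov_i + \eps I$ and letting $\eps \downarrow 0$, since both sides are continuous in the covariance matrices.
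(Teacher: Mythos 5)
Your proof is correct. There is, however, no in-paper argument to compare it against: the paper imports this formula verbatim from \cite{givens1984class} without proof, so your write-up supplies what the paper only cites. Your route is essentially the classical one from that literature (Givens--Shortt, and also Dowson--Landau and Olkin--Pukelsheim): expand the quadratic cost so that the coupling enters only through the cross-covariance $C$, observe that the attainable $C$ are \emph{exactly} those making the block matrix $\left(\begin{smallmatrix}\cov_1 & C\\ C^\top & \cov_2\end{smallmatrix}\right)$ positive semidefinite---with the Gaussian coupling realizing every feasible $C$, which is indeed the conceptual crux, since it shows no non-Gaussian coupling can do better---and then solve $\max\,\Tr{C}$ via the Schur complement and von Neumann's trace inequality, obtaining the nuclear norm of $\cov_2^\half \cov_1^\half$, namely $\Tr{ ( \cov_2^\half \cov_1 \cov_2^\half )^\half}$. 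Two minor points are worth making explicit in a full write-up: (i) the feasible set of cross-covariances is compact (entrywise, $|C_{ij}|\le \sqrt{(\cov_1)_{ii}(\cov_2)_{jj}}$), so the maximum is attained and an optimal Gaussian coupling genuinely exists; and (ii) in the perturbation step for singular $\cov_i$, the continuity of the left-hand side should be justified via the triangle inequality together with the bound $W_2\big(\Nc_d(\mu,\cov+\eps I_d),\Nc_d(\mu,\cov)\big)^2 \le \eps d$, obtained by coupling through an independent $\Nc_d(0,\eps I_d)$ increment; this also confirms the formula for degenerate Gaussian marginals, which the statement's hypothesis $\cov_1,\cov_2\in\PSD^d$ permits.
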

	
	Consider now a $d$-dimensional random vector $z = [x^\top, y^\top]^\top$ comprising the signal $x\in \RR^n$  and the observation $y\in\RR^m$, where $d = n + m$. For a given ambiguity set $\Pc$, the {\em distributionally robust minimum mean square error estimator} of $x$ given $y$ is a solution of the outer minimization problem in
	\begin{equation}
	\label{eq:minimax}
	\inf_{\psi \in \Lc} \sup_{\QQ \in \Pc} \EE^\QQ \left[ \| x - \psi(y) \|^2 \right],
	\end{equation}
	where $\Lc$ denotes the family of all measurable functions from $\RR^m$ to $\RR^n$. Problem~\eqref{eq:minimax} can be viewed as a zero-sum game between a statistician choosing the estimator $\psi$ and a fictitious adversary (or nature) choosing the distribution~$\QQ$. By construction, the minimax estimator performs best under the worst possible distribution $\QQ\in\Pc$. From now on we assume that $\Pc$ is the Wasserstein ambiguity set 
	\begin{equation}
	\label{eq:ambiguity}
	\Pc = \Big\{ \QQ \in \Nc_d ~:~ W_2(\QQ, \PP) \leq \rho \Big\},
	\end{equation}
	which can be interpreted as a ball of radius $\rho\ge 0$ in the space of normal distributions. We will further assume that $\Pc$ is centered at a normal distribution $\PP=\Nc_d(\mu, \cov)$ with covariance matrix $\cov\succ 0$. 

	Even though the Wasserstein ambiguity set $\Ambi$ is nonconvex (as mixtures of normal distributions are generically not normal), we can prove a minimax theorem, which ensures that one may interchange the infimum and the supremum in \eqref{eq:minimax} without affecting the problem's optimal value. 
	
	\begin{theorem}[Minimax theorem]
		\label{thm:saddle}
		If $\Ambi$ is a Wasserstein ambiguity set of the form~\eqref{eq:ambiguity}, then
		\begin{equation}
		\label{eq:zero-duality}
		\inf_{\psi \in \Lc} \sup_{\QQ \in \Pc} \EE^\QQ \left[ \| x - \psi(y) \|^2 \right] = \sup_{\QQ \in \Pc} \inf_{\psi \in \Lc} \EE^\QQ \left[ \| x - \psi(y) \|^2 \right].
		\end{equation}
	\end{theorem}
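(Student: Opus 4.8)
The plan is to prove the nontrivial inequality in~\eqref{eq:zero-duality} by exhibiting an explicit saddle point (Nash equilibrium) of the game $f(\psi,\QQ)\Let\EE^\QQ[\norm{x-\psi(y)}^2]$; this simultaneously yields the equilibrium announced in the abstract. The inequality $\inf_\psi\sup_\QQ f\ge\sup_\QQ\inf_\psi f$ holds for any function on a product domain, so only ``$\le$'' requires work. First I would dispose of the inner minimization: for fixed $\QQ=\Nc_d(m,\Lambda)\in\Ambi$ the minimizer of $f(\cdot,\QQ)$ over $\Lc$ is the conditional mean $y\mapsto m_x+\Lambda_{xy}\Lambda_{yy}^{-1}(y-m_y)$, which is affine, with optimal value $\Tr{\Lambda_{xx}-\Lambda_{xy}\Lambda_{yy}^{-1}\Lambda_{yx}}$ equal to the trace of a Schur complement and, crucially, independent of the mean $m$. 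Parametrizing each normal law by its $(m,\Lambda)$ pair and using Proposition~\ref{prop:givens}, the ambiguity set becomes $\{(m,\Lambda):\norm{m-\mu}^2+\mathbb{G}(\Lambda)\le\rho^2\}$ with $\mathbb{G}(\Lambda)\Let\Tr{\Lambda+\cov-2(\cov^\half\Lambda\cov^\half)^\half}$, so the right-hand side of~\eqref{eq:zero-duality} equals $\sup\{\Tr{\Lambda_{xx}-\Lambda_{xy}\Lambda_{yy}^{-1}\Lambda_{yx}}:\mathbb{G}(\Lambda)\le\rho^2\}$ (the mean may be set to $\mu$, freeing the whole budget for $\Lambda$). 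The decisive point is that although $\Ambi$ is nonconvex as a family of distributions, the covariance projection $\{\Lambda:\mathbb{G}(\Lambda)\le\rho^2\}$ is convex and compact, while $\Lambda\mapsto\Tr{\Lambda_{xx}-\Lambda_{xy}\Lambda_{yy}^{-1}\Lambda_{yx}}$ is concave; hence the supremum is attained at some $\cov\opt$.

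Next I would build the candidate equilibrium. Set $\QQ\opt\Let\Nc_d(\mu,\cov\opt)$ and let $\psi\opt$ be the associated optimal affine estimator $\psi\opt(y)=A\opt y+b\opt$ with $A\opt\Let\cov\opt_{xy}(\cov\opt_{yy})^{-1}$ and $b\opt\Let M\opt\mu$, where $M\opt\Let[I_n,-A\opt]$, so that $f(\psi,\QQ)=\norm{Mm-b}^2+\Tr{M\Lambda M^\top}$ for every affine $\psi$. By construction $\psi\opt$ is a best response to $\QQ\opt$, and $f(\psi\opt,\QQ\opt)=\Tr{M\opt\cov\opt(M\opt)^\top}$ coincides with the value computed above. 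It then suffices to show that $\QQ\opt$ is a best response to $\psi\opt$, i.e.\ $\sup_{\QQ\in\Ambi}f(\psi\opt,\QQ)\le f(\psi\opt,\QQ\opt)$, because the saddle-point inequalities then give $\inf_\psi\sup_\QQ f\le\sup_\QQ f(\psi\opt,\QQ)=f(\psi\opt,\QQ\opt)=\inf_\psi f(\psi,\QQ\opt)\le\sup_\QQ\inf_\psi f$, which closes the chain against the trivial direction.

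The main obstacle is precisely this last step, because against the fixed $\psi\opt$ the adversary maximizes $\norm{M\opt(m-\mu)}^2+\inner{(M\opt)^\top M\opt}{\Lambda}$ and could \emph{a priori} gain by spending budget on a mean shift $m\ne\mu$ rather than on the covariance. I would resolve this in two pieces. First, the first-order optimality of $\cov\opt$ for the concave program, together with the identity $\nabla_\Lambda\Tr{\Lambda_{xx}-\Lambda_{xy}\Lambda_{yy}^{-1}\Lambda_{yx}}\big|_{\cov\opt}=(M\opt)^\top M\opt$, shows that $\cov\opt$ also maximizes the \emph{linear} functional $\Lambda\mapsto\inner{(M\opt)^\top M\opt}{\Lambda}$ over the Bures ball $\{\mathbb{G}(\Lambda)\le\rho^2\}$, so no reallocation purely within the covariance helps. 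Second, splitting the budget as $\norm{m-\mu}^2\le t$ and $\mathbb{G}(\Lambda)\le\rho^2-t$, the mean term contributes at most $t\,\lambda_{\max}(M\opt(M\opt)^\top)$, whereas the Bures value function $h(r)\Let\max_{\mathbb{G}(\Lambda)\le r}\inner{(M\opt)^\top M\opt}{\Lambda}$ is concave with asymptotic slope $\lambda_{\max}(M\opt(M\opt)^\top)$ (since $\mathbb{G}(\Lambda)-\Tr{\Lambda}$ grows only like $\|\Lambda\|^{1/2}$); concavity then forces $h(\rho^2)-h(\rho^2-t)\ge t\,\lambda_{\max}(M\opt(M\opt)^\top)$. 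Thus every unit of budget is at least as valuable on the covariance as on the mean, so $t=0$ is optimal, $\sup_{\QQ\in\Ambi}f(\psi\opt,\QQ)=h(\rho^2)=f(\psi\opt,\QQ\opt)$, and $(\psi\opt,\QQ\opt)$ is a saddle point, establishing~\eqref{eq:zero-duality}.
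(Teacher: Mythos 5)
Your proposal takes a genuinely different route from the paper. The paper never exhibits the saddle point directly: it shows that both the inf--sup value (after restriction to affine estimators) and the sup--inf value collapse to one and the same finite program~\eqref{eq:program:primal}, by Lagrangian dualization of the Wasserstein constraint, the closed-form dual of the linear Bures subproblem (Lemma~\ref{lemma:main}, borrowed from \cite{nguyen2018distributionally}), and three invocations of the classical minimax theorem to tame the convex maximization over the mean $c$, from which $c\opt=\mu$ emerges. You instead verify the two best-response inequalities for an explicit pair $(\psi\opt,\QQ\opt)$. Your substitute for the paper's interchanges over $c$ --- the budget-splitting argument that a mean shift of squared norm $t$ earns at most $t\,\lambda_{\max}(M\opt(M\opt)^\top)$, while chordal slopes of the concave value function $h$ dominate its asymptotic slope $\lambda_{\max}$, so that $h(\rho^2)-h(\rho^2-t)\ge t\,\lambda_{\max}$ --- is correct: the asymptotic slope bound is certified by the rank-one test matrices $s\,v_1v_1^\top$ aligned with the top eigenvector of $D\opt$, for which $\mathbb{G}(s\,v_1v_1^\top)=s-2\sqrt{s\,v_1^\top\cov v_1}+\Tr{\cov}$, giving $h(r)\ge\lambda_{\max}(r-\Tr{\cov})$. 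This is arguably a more self-contained explanation of why nature leaves the mean at $\mu$ than the paper's duality machinery, and it avoids Lemma~\ref{lemma:main} for the saddle verification.

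There is, however, a genuine gap in your ``first piece.'' You invoke the first-order condition $\inner{\nabla\varphi(\cov\opt)}{\Lambda-\cov\opt}\le 0$ with $\nabla\varphi(\cov\opt)=(M\opt)^\top M\opt$, which presupposes a maximizer $\cov\opt$ at which $\varphi(\Lambda)=\Tr{\Lambda_{xx}-\Lambda_{xy}\Lambda_{yy}^{-1}\Lambda_{yx}}$ is differentiable, i.e.\ $\cov\opt_{yy}\succ 0$ --- the same condition you silently need for $\psi\opt$ to be well defined. For $\rho^2\ge\ubar\sigma$ the Bures ball contains matrices with singular $yy$-block; there $\varphi$ must be extended via the generalized Schur complement, under which $\varphi=\min_G\inner{D(G)}{\Lambda}$ is merely upper semicontinuous, and at a boundary maximizer the superdifferential need not be a singleton, so the particular supergradient $D(G\opt)$ you pick may fail the variational inequality (compare $\min(x,1-x)$ maximized at $1/2$: the supergradient $+1$ does not certify linear optimality). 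The paper sidesteps exactly this issue by interchanging $\min_G$ with $\max_S$ \emph{before} analyzing the maximizer, so the inner problem is linear in $S$ and Lemmas~\ref{lemma:main} and~\ref{lemma:oracle} yield the explicit solution $S\opt=\gamma^2(\gamma I_d-D)^{-1}\cov(\gamma I_d-D)^{-1}\succeq\ubar\sigma I_d$, legitimizing the constraint $S\succeq\ubar\sigma I_d$ and all gradient formulas. Your proof can be repaired without abandoning its structure: replace the first-order condition by a saddle-point existence argument for the game $(G,\Lambda)\mapsto\inner{D(G)}{\Lambda}$, which is convex and coercive in $G$ and linear in $\Lambda$ over the compact convex Bures ball (compactness itself needs a trace bound as in Lemma~\ref{lemma:compact}); a saddle $(G\opt,\cov\opt)$ then simultaneously delivers the maximizer of $\varphi$, the best response $G\opt$, and the linear optimality of $\cov\opt$ for $D(G\opt)$ that your argument requires, without any differentiability claim. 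As written, though, this step is a gap.
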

	
	
	\begin{remark}[Connection to Bayesian estimation]
		The optimal solutions $\psi\opt$ and $\QQ\opt$ of the two dual problems in \eqref{eq:zero-duality} represent the minimax strategies of the statistician and nature, respectively. Theorem~\ref{thm:saddle} implies that $(\psi\opt,\QQ\opt)$ forms a saddle point (and thus a Nash equilibrium) of the underlying zero-sum game. Hence, the robust estimator $\psi\opt$ is also the optimal Bayesian estimator for the prior $\QQ\opt$. For this reason, $\QQ\opt$ is often referred to as the {\em least favorable prior}~\cite{lehmann2006theory}. 
	\end{remark}

	We now demonstrate that the minimax problem~\eqref{eq:minimax} is equivalent to a tractable convex program, whose solution allows us to recover both the optimal estimator $\psi\opt$ as well as the least favorable prior $\QQ\opt$.
	
	\begin{theorem}[Tractable reformulation]
		\label{theorem:program:FW}
		The minimax problem~\eqref{eq:minimax} with the Wasserstein ambiguity set~\eqref{eq:ambiguity} centered at $\PP=\Nc_d(\mu, \cov)$, $\ubar \sigma \Let \lambda_{\rm min}(\cov)>0$, is equivalent to the finite convex program
		%
		\begin{equation}
		\label{eq:program:dual}
		\begin{split}
		\sup \quad & \Tr{S_{xx} - S_{xy} S_{yy}^{-1} S_{yx} } \\
		\st \quad & S= \begin{bmatrix} S_{xx} & S_{xy} \\ S_{yx} & S_{yy} \end{bmatrix} \in \PSD^d, \quad S_{xx} \in \PSD^n, \quad S_{yy} \in \PSD^m, \quad S_{xy}=S_{yx}^\top \in \RR^{n \times m} \\
		& \Tr{S + \cov - 2 \left( \cov^\half S \cov^\half \right)^\half} \leq \rho^2,\quad S  \succeq \ubar \sigma I_d .
		\end{split}
		\end{equation}		
		If $S\opt$, $S_{xx}\opt$, $S_{yy}\opt$ and $S_{xy}\opt$ is optimal in \eqref{eq:program:dual} and $\mu = [\mu_x^\top, \mu_y^\top]^\top$ for some $\mu_x\in\RR^n$ and $\mu_y\in\RR^m$, then the affine function $\psi\opt(y) = S_{xy}\opt (S_{yy}\opt)^{-1} (y - \mu_y) + \mu_x$ is the distributionally robust minimum mean square error estimator, and the normal distribution $\QQ\opt=\Nc_d(\mu, S\opt)$ is the least favorable prior. 
	\end{theorem}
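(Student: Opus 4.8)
The plan is to combine the minimax theorem with the classical structure of Gaussian minimum mean square error estimation. First I would invoke Theorem~\ref{thm:saddle} to replace~\eqref{eq:minimax} by the max-min problem $\sup_{\QQ\in\Pc}\inf_{\psi\in\Lc}\EE^\QQ[\|x-\psi(y)\|^2]$, which is more tractable because the inner infimum admits a closed form. For a fixed $\QQ=\Nc_d(m,S)$ with $m=[m_x^\top,m_y^\top]^\top$ and $S=\begin{bmatrix} S_{xx} & S_{xy}\\ S_{yx} & S_{yy}\end{bmatrix}$, the estimator minimizing the mean square error is the conditional mean $\psi_\QQ(y)=\EE^\QQ[x\mid y]$. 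Exploiting the joint normality of $(x,y)$, this estimator is the affine map $\psi_\QQ(y)=m_x+S_{xy}S_{yy}^{-1}(y-m_y)$, and (for nondegenerate $S_{yy}$) the attained value equals the trace of the conditional covariance, i.e.\ the Schur complement $\Tr{S_{xx}-S_{xy}S_{yy}^{-1}S_{yx}}$. The crucial observation is that this value depends on $\QQ$ only through its covariance $S$, not through its mean $m$.

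Next I would reduce the outer supremum to an optimization over $S$ alone. By Proposition~\ref{prop:givens}, the Wasserstein constraint reads $\|m-\mu\|^2+\Tr{S+\cov-2(\cov^\half S\cov^\half)^\half}\le\rho^2$. Since the objective is independent of $m$ while $m$ only consumes budget through $\|m-\mu\|^2$, any feasible $\Nc_d(m,S)$ can be replaced by $\Nc_d(\mu,S)$ without decreasing the objective and without violating feasibility; hence it is optimal to center the worst-case distribution at $\mu$. This collapses the problem to $\sup_S\{\Tr{S_{xx}-S_{xy}S_{yy}^{-1}S_{yx}}: S\in\PSD^d,\ \Tr{S+\cov-2(\cov^\half S\cov^\half)^\half}\le\rho^2\}$, where the block constraints $S_{xx}\in\PSD^n$, $S_{yy}\in\PSD^m$, $S_{xy}=S_{yx}^\top$ are inherited from $S\in\PSD^d$. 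I would then verify convexity: the objective is concave because $\Tr{S_{xx}-S_{xy}S_{yy}^{-1}S_{yx}}=\min_{A\in\RR^{n\times m}}\Tr{[I_n,-A]\,S\,[I_n,-A]^\top}$ is a pointwise infimum of functions linear in $S$, while the feasible set is convex because $S\mapsto\Tr{(\cov^\half S\cov^\half)^\half}$ is concave, so the Gelbrich penalty is convex.

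The remaining and most delicate step is to justify the eigenvalue constraint $S\succeq\ubar\sigma I_d$, which I would impose without loss of optimality. The objective is monotone in the positive semidefinite order, since $S'\succeq S$ implies that the Schur complement of $S'$ dominates that of $S$. Given any feasible $S$ with eigendecomposition $S=\sum_i s_i u_iu_i^\top$, I would consider the inflated matrix $S'=\sum_i\max(s_i,\ubar\sigma)\,u_iu_i^\top\succeq S$, which satisfies $S'\succeq\ubar\sigma I_d$ and does not decrease the objective. The key lemma is that this inflation does not increase the squared Gelbrich distance to $\cov$: raising an eigenvalue below $\ubar\sigma=\lambda_{\min}(\cov)$ up to $\ubar\sigma$ moves it closer to the spectrum of $\cov$ and therefore relaxes rather than tightens the Wasserstein constraint. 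I expect this monotonicity of the Gelbrich distance under eigenvalue inflation to be the main obstacle, since it is transparent only when $S$ and $\cov$ are simultaneously diagonalizable and requires a careful trace/concavity argument (in the spirit of \cite{nguyen2018distributionally}) in the non-commuting case.

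Finally, I would read off the optimal estimator and the least favorable prior from an optimal $S\opt$. The reduction above identifies the worst-case distribution as $\QQ\opt=\Nc_d(\mu,S\opt)$, and since $S\opt\succeq\ubar\sigma I_d\succ0$ guarantees $S_{yy}\opt\succ0$, the conditional-mean formulas of the first step are classical and the best response to $\QQ\opt$ is the affine map $\psi\opt(y)=\mu_x+S_{xy}\opt(S_{yy}\opt)^{-1}(y-\mu_y)$. By Theorem~\ref{thm:saddle} the pair $(\psi\opt,\QQ\opt)$ is a saddle point, so $\psi\opt$ solves the outer minimization in~\eqref{eq:minimax} and the optimal values of the minimax problem and of the convex program~\eqref{eq:program:dual} coincide, which establishes the claimed equivalence.
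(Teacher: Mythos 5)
Your proposal is correct in outline but reaches program~\eqref{eq:program:dual} by a genuinely different route than the paper, and the difference is concentrated in the justification of the constraint $S \succeq \ubar\sigma I_d$. The paper never evaluates the inner infimum against a fixed Gaussian; instead, its proof of Theorem~\ref{thm:saddle} already reduces~\eqref{eq:minimax} to the min--max problem~\eqref{eq:bound1.5} over the affine parameters $(G,g)$ and the covariance $S$, and Lemma~\ref{lemma:oracle} (borrowed from \cite{nguyen2018distributionally}) delivers the inner worst case in closed form, $S\opt = \gamma^2(\gamma I_d - D)^{-1}\cov(\gamma I_d - D)^{-1}$ with $0 \preceq D \prec \gamma I_d$, from which $\|(S\opt)^{-1}\| \le \|\cov^{-1}\| = \ubar\sigma^{-1}$ follows in one line; the constraint is then appended as redundant, $\inf_G$ and $\sup_S$ are interchanged via \cite[Proposition~5.5.4]{bertsekas2009convex}, and $G$ is eliminated through $G\opt = S_{xy}S_{yy}^{-1}$. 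You instead work on the sup--inf side from the start, land on the Schur-complement objective directly via the conditional-mean formula (your monotonicity claim for the Schur complement is correct, by the variational identity $x^\top(S_{xx}-S_{xy}S_{yy}^{-1}S_{yx})x=\min_y [x^\top, y^\top] S [x^\top, y^\top]^\top$), and therefore need an eigenvalue-inflation lemma the paper never requires. What your route buys is a more self-contained argument with no second minimax interchange and no linearized oracle; what the paper's route buys is that the $\ubar\sigma$-bound falls out of the same closed-form solution that later powers the bisection oracle in Algorithm~1. The lemma you flag as the main obstacle is in fact true in the non-commuting case and admits a short proof via operator monotonicity of the square root. For $S(t) = S + t\,uu^\top$ with $Su = su$, $\|u\|=1$ and $S(t) \succ 0$, one computes
\begin{equation*}
\frac{\diff}{\diff t}\,\Tr{S(t) + \cov - 2\left(\cov^\half S(t)\cov^\half\right)^\half} = 1 - u^\top T(t)\,u,
\qquad
T(t) \Let \cov^\half \left(\cov^\half S(t) \cov^\half\right)^{-\half} \cov^\half,
\end{equation*}
and $T(t)$ is the unique positive definite solution of $T S(t) T = \cov$, so it also equals $S(t)^{-\half}\bigl(S(t)^\half \cov S(t)^\half\bigr)^\half S(t)^{-\half}$. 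As long as $s+t \le \ubar\sigma$ we have $\cov \succeq (s+t) I_d$, hence $S(t)^\half \cov S(t)^\half \succeq (s+t) S(t)$, and the L\"owner--Heinz theorem gives $\bigl(S(t)^\half \cov S(t)^\half\bigr)^\half \succeq \sqrt{s+t}\, S(t)^\half$; congruence by $S(t)^{-\half}$ yields $T(t) \succeq \sqrt{s+t}\, S(t)^{-\half}$, so $u^\top T(t) u \ge \sqrt{s+t}\,(s+t)^{-\half} = 1$ because $S(t)u = (s+t)u$. Thus the Gelbrich term is nonincreasing along the entire inflation path (singular $S$ is handled by continuity), exactly as you conjectured. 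One further caveat applies to your last step: value equality from Theorem~\ref{thm:saddle} together with optimality of $\QQ\opt$ does not by itself make the best response $\psi\opt$ minimax optimal---in general, a best response to an optimal adversary need not be minimax. Like the paper, you should exploit that $S\opt \succeq \ubar\sigma I_d$ makes the Bayes response unique (strict convexity) and invoke the saddle-point characterization \cite[Section~5.5.5]{boyd2004convex} to conclude that $\psi\opt$ solves the outer minimization in~\eqref{eq:minimax}.
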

	Theorem~\ref{theorem:program:FW} provides a tractable procedure for constructing a Nash equilibrium $(\psi\opt, S\opt)$ for the statistician's game against nature. 	
	Note that if $\rho=0$, then $S\opt=\Sigma$ is the unique solution to~\eqref{eq:program:dual}. In this case the estimator $\psi\opt$ reduces to the Bayesian estimator corresponding to the nominal distribution $\PP=\Nc_d(\mu, \cov)$. We emphasize that the choice of the Wasserstein radius $\rho$ may have a significant impact on the resulting estimator. In fact, this is a key distinguishing feature of the Wasserstein ambiguity set~\eqref{eq:ambiguity} with respect to other popular divergence-based ambiguity sets. 
	
	\begin{remark}[Divergence-based ambiguity sets]
	\label{rem:divergence}
		As a natural alternative, one could replace the Wasserstein distance in~\eqref{eq:ambiguity} with an information divergence. For example, ambiguity sets defined via $\tau$-divergences, which encapsulate the popular KL divergence as a special case, have been studied in \cite{levy2013robust, zorzi2017robust}. As shown in \cite[Theorem~1]{levy2013robust} and \cite[Theorem~2.1]{zorzi2017robust}, the optimal estimator corresponding to any $\tau$-divergence ambiguity set always coincides with the Bayesian estimator for the nominal distribution $\PP=\Nc_d(\mu, \cov)$ irrespective of $\rho$. Thus, in stark contrast to the setting considered here, the size of a $\tau$-divergence ambiguity set has no impact on the corresponding optimal estimator. Moreover, the least favorable prior $\QQ=\Nc_d(\mu, S\opt)$ for a $\tau$-divergence ambiguity set always satisfies
		\begin{equation}
		\label{eq:structure}
		S\opt = \begin{bmatrix} S_{xx}\opt & \cov_{xy} \\ \cov_{yx} & \cov_{yy} \end{bmatrix}.
		\end{equation}
		Thus, in order to harm the statistician, nature only perturbs the second moments of the signal but sets all second moments of the observation as well as all cross moments to their nominal values.
	\end{remark}

	\begin{example}[Impact of $\rho$ on the Nash equilibrium]
		We illustrate the dependence of the saddle point $(\psi\opt, \QQ\opt)$ on the size $\rho$ of the ambiguity set in a $2$-dimensional example. Suppose that the nominal distribution $\PP$ of $[x, y] \in \mathbb R^2$ satisfies $\mu_x=\mu_y=0$, $\Sigma_{xx} = \Sigma_{xy} = 1$ and $\Sigma_{yy} = 1.1$, implying that the noise $w \Let y-x$ and the signal $x$ are independent ($\EE^{\PP}[xw] = \Sigma_{xy} - \Sigma_{xx} = 0$). Figure~\ref{fig:level-set} visualizes the canonical $90\%$ confidence ellipsoids of the the least favorable priors as well as the graphs of the optimal estimators for different sizes of the Wasserstein and KL ambiguity sets. 		
		As $\rho$ increases, the least favorable prior for the Wasserstein ambiguity set displays the following interesting properties: (i) the signal variance $S\opt_{xx}$ increases, (ii) the measurement variance $S\opt_{yy}$ decreases, (iii) the signal-measurement covariance $S\opt_{xy}$ decreases towards 0, and (iv) the noise variance $\EE^{\QQ\opt}[w^2] = S\opt_{yy} - 2S\opt_{xy} + S\opt_{xx}$ increases. Hence, (v) the signal-noise covariance $\EE^{\QQ\opt}[xw] =  S\opt_{xy} - S\opt_{xx}$ decreases and is \textit{negative} for all $\rho>0$, and (vi) the optimal estimator $\psi^\star$ tends to the zero function.
		Note that the optimal estimator and the measurement variance remain constant in $\rho$ when working with a KL ambiguity set. 		
	\end{example}
	
	\vspace{-6mm}
	\begin{figure} [!htb]
		\centering
		\subfigure{\label{fig:wass-set} 
			\includegraphics[width=0.48\columnwidth]{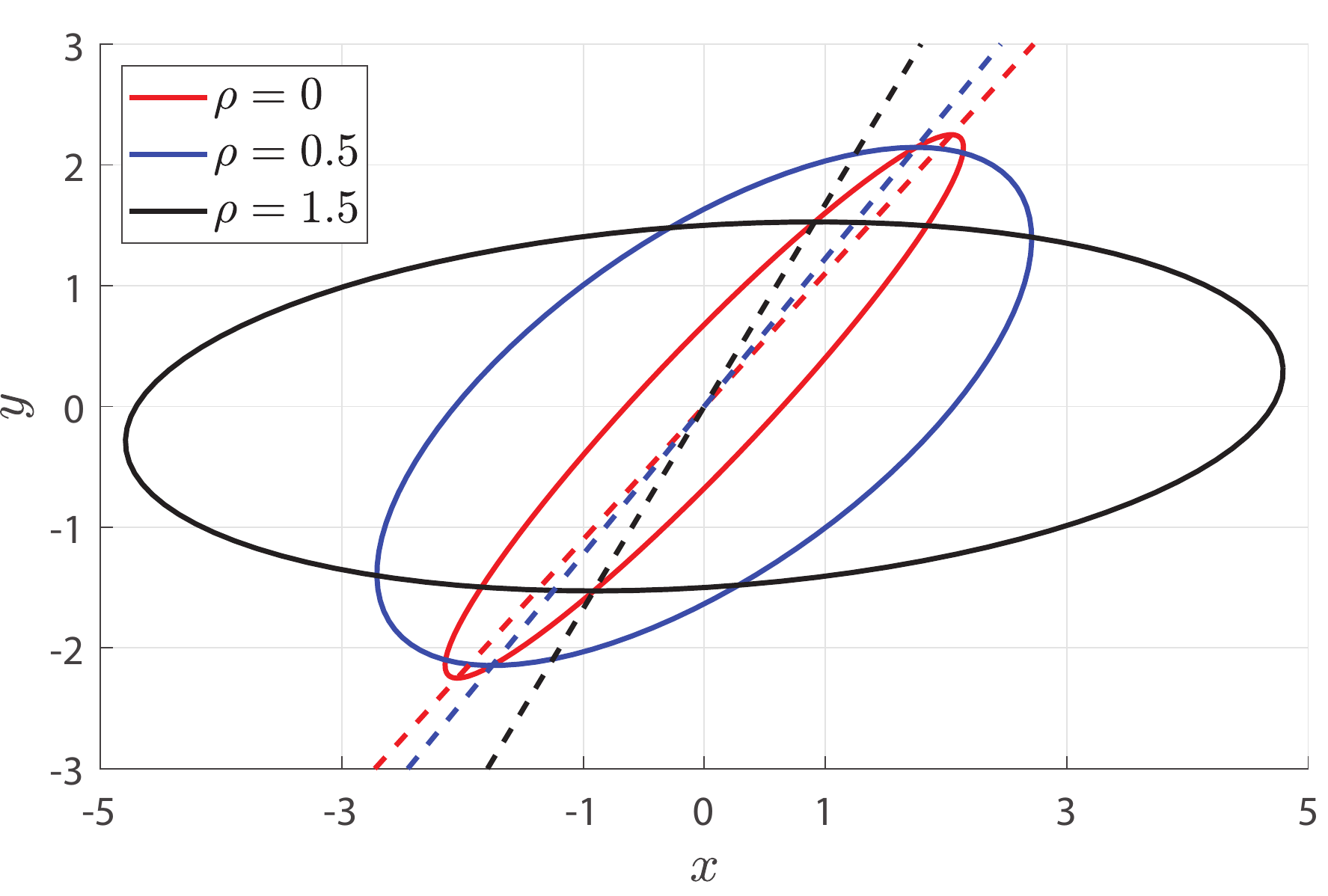}} 
		\subfigure{\label{fig:tau-set} 
			\includegraphics[width=0.48\columnwidth]{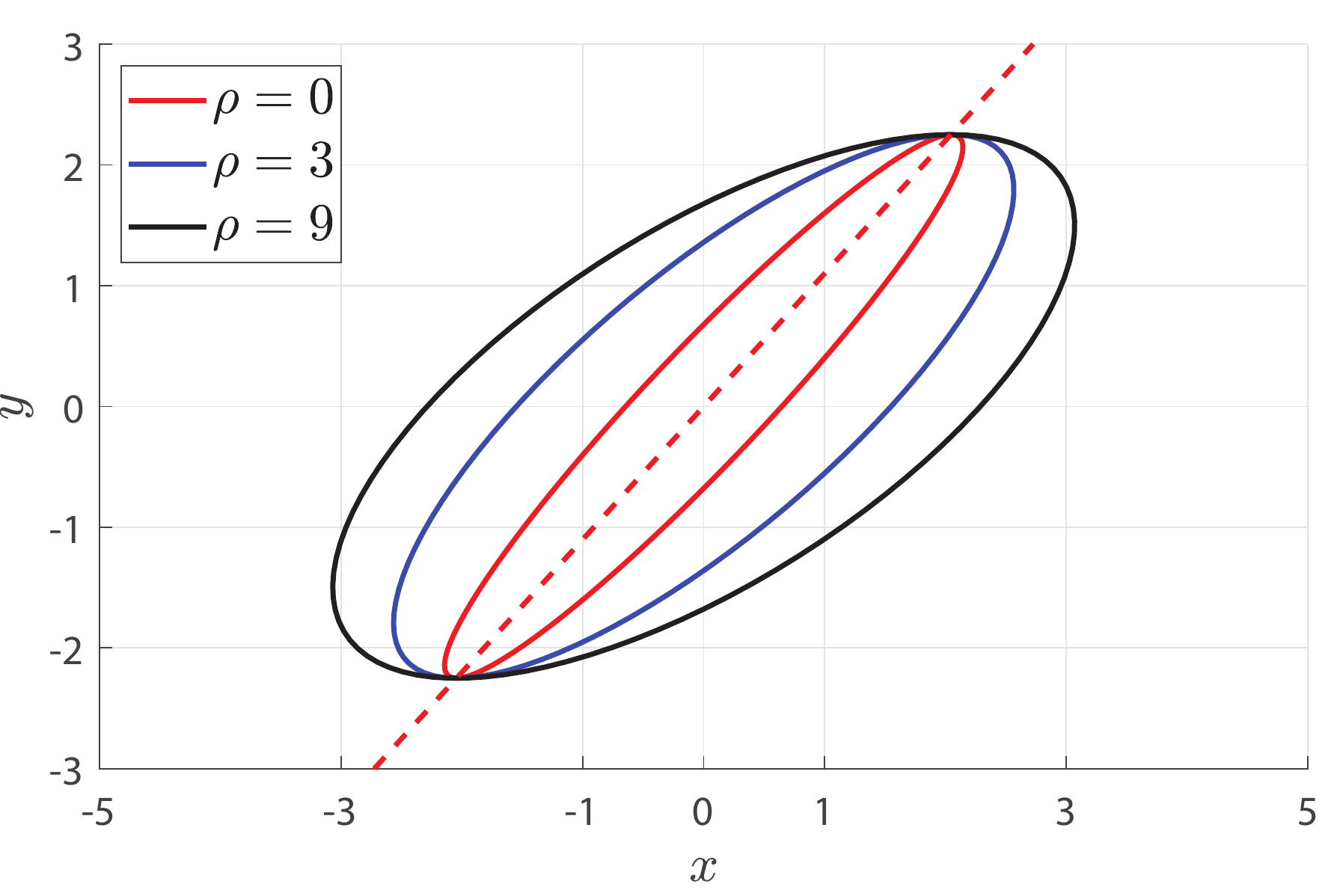}} \\
		\captionsetup{skip=0pt}
		\caption{Least favorable priors (solid ellipsoids) and optimal estimators (dashed lines) for Wasserstein (left) and KL (right) ambiguity sets with different radii $\rho$. The Wasserstein estimators vary with $\rho$, while the KL estimators remain unaffected by $\rho$.}
		\label{fig:level-set}
	\end{figure}
	
	\begin{remark}[Ambiguity sets with non-normal distributions]
		Theorem~\ref{thm:saddle} can be generalized to Wasserstein ambiguity set of the form
		$\mc Q = \{ \QQ \in \mc M(\mbb R^d) ~:~ W_2(\QQ, \PP) \leq \rho \},$
		where $\mc M(\mbb R^d)$ denotes the set of all (possibly non-normal) probability distributions on $\mbb R^d$ with finite second moments, and $\PP=\Nc_d(\mu, \cov)$.  In this case, the minimax result~\eqref{eq:zero-duality} remains valid provided that the set $\mc L$ of all measurable estimators is restricted to the set $\mc A$ of all affine estimators. Theorem~\ref{theorem:program:FW} also remains valid under this alternative setting.
	\end{remark}

	\section{Efficient Frank-Wolfe Algorithm}
	\label{sec:frank-wolfe}
	The finite convex optimization problem~\eqref{eq:program:dual} is numerically challenging as it constitutes a {\em nonlinear} semi-definite program (SDP). In principle, it would be possible to eliminate all nonlinearities by using Schur complements and to reformulate~\eqref{eq:program:dual} as a {\em linear} SDP, which is formally tractable. However, it is folklore knowledge that general-purpose SDP solvers are yet to be developed that can reliably solve large-scale problem instances. We thus propose a tailored first-order method to solve the nonlinear SDP~\eqref{eq:program:dual} directly, which exploits a covert structural property of the problem's objective function
	\[
		f(S)\Let \Tr{S_{xx} - S_{xy} S_{yy}^{-1} S_{yx} }.
	\]
	
	\begin{definition}[Unit total elasticity\footnote{Our terminology is inspired by the definition of the elasticity of a univariate function $\varphi(s)$ as $\frac{\diff \varphi(s)}{\diff s}\frac{s}{\varphi(s)}$.}]
		\label{rem:obj}
		We say that a function $\varphi:\PSD^d\rightarrow \RR_+$ has unit total elasticity if
		\[ 
			\varphi(S)=\inner{S}{\nabla \varphi(S)}\quad \forall S\in\PSD^d.
		\]
	\end{definition}	
	It is clear that every linear function has unit total elasticity. Maybe surprisingly, however, the objective function $f(S)$ of problem~\eqref{eq:program:dual} also enjoys unit total elasticity because
		\begin{equation*}
		\inner{S}{\nabla f(S)} = \left \langle{\begin{bmatrix} S_{xx} & S_{xy} \\ S_{yx} & S_{yy} \end{bmatrix}},{\begin{bmatrix} I_n & -S_{xy} S_{yy}^{-1} \\ 
		-S_{yy}^{-1} S_{yx} & S_{yy}^{-1} S_{yx} S_{xy} S_{yy}^{-1} \end{bmatrix}}\right\rangle= f(S). 
		\end{equation*}		
	Moreover, as will be explained below, it turns out problem~\eqref{eq:program:dual} can be solved highly efficiently if its objective function is replaced with a linear approximation. These observations motivate us to solve~\eqref{eq:program:dual} with a Frank-Wolfe algorithm \cite{frank1956algorithm}, which starts at $S^{(0)}=\cov$ and constructs iterates
	\begin{subequations}
		\label{Frank-Wolfe}
		\begin{align}
		\label{Frank-Wolfe_iteration}
		S^{(k+1)} = \alpha_k F\big(S^{(k)}\big) + (1-\alpha_k)S^{(k)}\quad \forall k\in\mathbb N\cup\{0\}, 
		\end{align}
		where $\alpha_k$ represents a judiciously chosen step-size, while the oracle mapping $F:\PSD \ra \PSD$ returns the unique solution of the direction-finding subproblem
		\begin{equation}
		\label{eq:program:subproblem}
		F(S) \Let \left\{
		{\begin{array}{c@{\quad}l}
			\mathop{\arg\max}\limits_{L  \succeq \ubar \sigma I_d} & \inner{L}{\nabla f(S)} \\
			\st & \Tr{L + \cov - 2 \left( \cov^\half L \cov^\half \right)^\half} \leq \rho^2\,.
			\end{array}}
		\right.
		\end{equation}
	\end{subequations}
	In each iteration, the Frank-Wolfe algorithm thus minimizes a linearized objective function over the original feasible set. In contrast to other commonly used first-order methods, the Frank-Wolfe algorithm thus obviates the need for a potentially expensive projection step to recover feasibility. It is easy to convince oneself that any solution of the nonlinear SDP~\eqref{eq:program:dual} is indeed a fixed point of the operator~$F$. To make the Frank-Wolfe algorithm \eqref{Frank-Wolfe} work in practice, however, one needs
	\begin{enumerate}[label = (\roman*)]
		\item an efficient routine for solving the direction-finding subproblem~\eqref{eq:program:subproblem};
		\item a step-size rule that offers rigorous guarantees on the algorithm's convergence rate. 
	\end{enumerate}
	In the following, we propose an efficient bisection algorithm to address (i). As for (ii), we show that the convergence analysis portrayed in \cite{jaggi2013revisiting} applies to the problem at hand. The procedure for solving~\eqref{eq:program:subproblem} is outlined in Algorithm~\ref{algorithm:bisection}, which involves an auxiliary function $h:\RR_+\rightarrow \RR$ defined via
	\begin{equation}
		\label{eq:h-function}
		h(\gamma) \Let \rho^2 - \inner{\cov}{\big( I_d - \gamma (\gamma I_d - \nabla f(S))^{-1} \big)^2}.
	\end{equation}
		
	\begin{theorem}[Direction-finding subproblem]
		\label{theorem:approximate}
		For any fixed inputs $\rho, \varepsilon \in \RR_{++}$, $\cov \in \PD^d$ and $S \in \PSD^d$, Algorithm~\ref{algorithm:bisection} outputs a feasible and $\eps$-suboptimal solution to \eqref{eq:program:subproblem}. 
	\end{theorem}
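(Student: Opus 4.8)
The plan is to recognize \eqref{eq:program:subproblem} as a convex program---the maximization of a linear functional over the intersection of the (convex) Bures--Wasserstein ball $\{L : \Tr{L + \cov - 2(\cov^\half L \cov^\half)^\half} \le \rho^2\}$ with the spectrahedron $\{L \succeq \ubar\sigma I_d\}$---and to solve it through its single scalar Lagrange multiplier. First I would verify Slater's condition: the center $\cov$ attains Wasserstein distance $0 < \rho^2$, so strong duality holds for the Wasserstein constraint. Writing $G \Let \nabla f(S) = W W^\top \succeq 0$ with $W = \begin{bmatrix} I_n \\ -S_{yy}^{-1} S_{yx} \end{bmatrix}$, the objective $\inner{L}{G}$ is unbounded above on $\{L \succeq \ubar\sigma I_d\}$ alone (scale $L$ to infinity), so the Wasserstein constraint must be active at the optimum and its multiplier $\gamma\opt$ is finite and positive.

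Next I would dualize only the Wasserstein constraint and solve the inner maximization in closed form. Setting the gradient of $\inner{L}{G} - \gamma \Tr{L + \cov - 2(\cov^\half L \cov^\half)^\half}$ to zero yields, for every $\gamma > \lambda_{\max}(G)$, the unique positive-definite maximizer
\[
  L(\gamma) = \gamma^2 (\gamma I_d - G)^{-1}\, \cov\, (\gamma I_d - G)^{-1}.
\]
The key structural observation---and the reason the spectrahedral constraint can be dropped---is that $B \Let \gamma(\gamma I_d - G)^{-1} \succeq I_d$ whenever $\gamma > \lambda_{\max}(G)$, because $G \succeq 0$ forces every eigenvalue $\gamma/(\gamma - g_i) \ge 1$. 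Combined with $\cov \succeq \ubar\sigma I_d$ this gives $L(\gamma) = B\cov B \succeq \ubar\sigma B^2 \succeq \ubar\sigma I_d$, so $L(\gamma)$ automatically satisfies $L \succeq \ubar\sigma I_d$ and hence solves the \emph{full} subproblem. Substituting $L(\gamma)$ into the Wasserstein expression and using the identity $I_d - \gamma(\gamma I_d - G)^{-1} = -G(\gamma I_d - G)^{-1}$ shows that the constraint value equals $\inner{\cov}{(I_d - \gamma(\gamma I_d - G)^{-1})^2}$, so the active-constraint equation is \emph{precisely} $h(\gamma) = 0$.

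It then remains to analyze $h$ and the bisection. I would show that $h$ is continuous and strictly increasing on $(\lambda_{\max}(G), \infty)$ with $h(\gamma)\to\rho^2 > 0$ as $\gamma\to\infty$ and $h(\gamma)\to-\infty$ as $\gamma\downarrow\lambda_{\max}(G)$ (treating separately the degenerate case in which $\cov$ is orthogonal to the top eigenspace of $G$), so that a unique root $\gamma\opt$ exists and lies strictly above $\lambda_{\max}(G)$---exactly the configuration the bisection is designed to bracket. Starting from a finite interval $[\lambda_{\max}(G),\gamma_{\max}]$ with $h(\gamma_{\max}) > 0$, each step halves the bracket, so after $O(\log(1/\varepsilon))$ steps the returned multiplier $\gamma$ is within any prescribed tolerance of $\gamma\opt$. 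To deliver a genuinely feasible point I would return the iterate on the side with $h(\gamma) \ge 0$, guaranteeing $L(\gamma)$ obeys the Wasserstein constraint, and then bound the suboptimality $\inner{L(\gamma\opt) - L(\gamma)}{G}$ by the local Lipschitz modulus of the smooth monotone map $\gamma \mapsto \inner{L(\gamma)}{G}$ times $|\gamma - \gamma\opt|$, choosing the iteration count so this product is at most $\varepsilon$.

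The main obstacle I anticipate is this last quantitative step: converting the multiplier-space tolerance produced by the bisection into a certified $\varepsilon$-bound on the objective gap while simultaneously guaranteeing exact feasibility. This requires uniform control of the derivative of $\gamma \mapsto \inner{L(\gamma)}{G}$ on the bracketing interval---hence a lower bound on $\gamma\opt - \lambda_{\max}(G)$ to keep $(\gamma I_d - G)^{-1}$ bounded---and careful bookkeeping to ensure the reported output lands on the feasible side of the root. By contrast, the closed-form algebra for $L(\gamma)$, for $h$, and for their derivatives is routine once one diagonalizes along the eigenbasis of $G$ and reduces the matrix identities to scalar statements about the eigenvalues.
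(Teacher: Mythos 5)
Your structural analysis coincides with the paper's: the closed-form maximizer $L(\gamma) = \gamma^2(\gamma I_d - D)^{-1}\cov(\gamma I_d - D)^{-1}$ obtained by dualizing only the Wasserstein constraint, the observation that $L(\gamma) \succeq \ubar\sigma I_d$ holds automatically (your argument via $\gamma(\gamma I_d - D)^{-1} \succeq I_d$ is essentially the paper's Lemma~\ref{lemma:oracle}), the identification of $h(\gamma\opt)=0$ as the active-constraint equation, and the bracketing interval (Lemma~\ref{lemma:interval}; note your worry about the ``degenerate case'' where $\cov$ is orthogonal to the top eigenspace of $D$ is vacuous, since $\cov \succ 0$ forces $v_1^\top \cov v_1 \geq \ubar\sigma > 0$, which also hands you the lower bound $\gamma - \lambda_1 \geq \lambda_1\sqrt{\ubar\sigma}/\rho$ on the whole bracket for free). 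Where you diverge---and where there is a genuine gap---is the $\eps$-suboptimality certificate. The obstacle you flag at the end (converting a multiplier-space tolerance into an objective-gap bound via a Lipschitz modulus of $\gamma \mapsto \inner{L(\gamma)}{D}$) is one the paper never faces, because it uses weak duality instead: for any $\gamma$ with $\gamma I_d \succ D$, the dual objective
\begin{equation*}
\gamma\big(\rho^2 - \Tr{\cov}\big) + \gamma^2 \inner{(\gamma I_d - D)^{-1}}{\cov}
\end{equation*}
upper-bounds the inaccessible optimal value $\inner{L(\gamma\opt)}{D}$, so the quantity $\Delta$ computed inside Algorithm~\ref{algorithm:bisection} is itself a certified bound on the suboptimality $\inner{L(\gamma\opt) - L(\gamma)}{D}$. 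Feasibility and $\eps$-suboptimality then follow immediately from the stopping test $h(\gamma) > 0$ and $\Delta < \eps$, with no derivative estimates and no lower bound on $\gamma\opt - \lambda_1$ required.

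This difference is not merely aesthetic: Theorem~\ref{theorem:approximate} is a claim about Algorithm~\ref{algorithm:bisection} \emph{as written}, whose termination rule is the adaptive test $h(\gamma) > 0 \wedge \Delta < \eps$, not a preselected iteration count. Your plan---choose the number of bisection steps a priori so that (local Lipschitz constant) $\times$ (bracket width) $\leq \eps$---would, even if carried out with the uniform bounds sketched above, establish correctness of a \emph{modified} algorithm with a fixed iteration budget; it would not explain why the algorithm's own stopping criterion certifies $\eps$-suboptimality, nor why that criterion is eventually met (the latter follows because $\Delta$ is continuous and vanishes at $\gamma\opt$ by strong duality and activeness of the constraint, so bisection drives $\Delta$ below $\eps$). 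The missing idea, concretely, is that the dual objective value at the \emph{current} iterate doubles as an a posteriori optimality certificate---precisely the role of $\Delta$ in the algorithm---which dissolves the quantitative step you identified as the main difficulty.
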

	
	We emphasize that the most expensive operation in Algorithm~\ref{algorithm:bisection} is the matrix inversion $(\gamma I_d - D)^{-1}$, which needs to be evaluated repeatedly for different values of $\gamma$. These computations can be accelerated by diagonalizing $D$ only once at the beginning. The repeat loop in Algorithm~\ref{algorithm:bisection} carries out the actual bisection algorithm, and a suitable initial bisection interval is determined by a pair of a priori bounds $LB$ and $UB$, which are available in closed form (see Appendix~A).

	\begin{table}[t]
		\centering
		\begin{minipage}[t]{19em}
			\vspace{-1em}
			\begin{algorithm}[H]
				\caption{Bisection algorithm to solve \eqref{eq:program:subproblem}}
				\label{algorithm:bisection}
				\begin{algorithmic}
					\REQUIRE Covariance matrix $\cov \succ 0$ \\
					\hspace{2em} Gradient matrix $D\Let\nabla f(S)\succeq 0$ \\
					\hspace{2em} Wasserstein radius $\rho > 0$ \\					
					\hspace{2em} Tolerance $\varepsilon > 0$
					\vspace{0.21em}
					\STATE Denote the largest eigenvalue of $D$ by $\lambda_1$
					\STATE Let $v_1$ be an eigenvector of $\lambda_1$
					\STATE Set $LB \leftarrow \lambda_1 ( 1 + \sqrt{v_1^\top \cov v_1} / \rho ) $
					\STATE Set $UB \leftarrow \lambda_1 ( 1 + \sqrt{\Tr{\cov}} / \rho ) $
					\REPEAT
					\STATE Set $\gamma \leftarrow (UB + LB) / 2$
					\STATE Set $L \leftarrow \gamma^2 (\gamma I_d - D)^{-1} \cov (\gamma I_d - D)^{-1}$
					\IF{$ h(\gamma) < 0 $}
					\STATE Set $LB \leftarrow \gamma$
					\ELSE
					\STATE Set $UB \leftarrow \gamma$
					\ENDIF
					\STATE Set $\Delta \leftarrow \gamma ( \rho^2 - \Tr{\cov} ) - \inner{L}{D} $
					\STATE \hspace{4em} $+ \gamma^2 \inner{(\gamma I_d - D)^{-1}}{\cov} $
					\UNTIL{$h(\gamma) > 0$ and $\Delta < \varepsilon$ } \vspace{0.21em}
					\ENSURE $L$
				\end{algorithmic}
			\end{algorithm}
		\end{minipage} ~
		\begin{minipage}[t]{19em}
			\vspace{-1em}
			\begin{algorithm}[H]
				\caption{Frank-Wolfe algorithm to solve~\eqref{eq:program:dual}}
				\label{algorithm:FW}
				\begin{algorithmic}
					\REQUIRE Covariance matrix $\cov \succ 0$ \\
					\hspace{2em} Wasserstein radius $\rho > 0$ \\					
					\hspace{2em} Tolerance $\delta > 0$ \\
					\vspace{0.5em}
					\STATE Set $\ubar\sigma \leftarrow \lambda_{\min}(\cov), \bar \sigma \leftarrow (\rho + \sqrt{\Tr{\cov}})^2 $					
					\STATE Set $\overline{C} \leftarrow 2 \bar\sigma^4 / \ubar\sigma^3$
					\STATE Set $S^{(0)} \leftarrow \cov, k \leftarrow 0$
					\WHILE {Stopping criterion is not met} \vspace{0.5ex}
					\STATE Set $\alpha_k \leftarrow \frac{2}{k+2}$ \vspace{0.1em}
					\STATE Set $G \leftarrow S^{(k)}_{xy} (S^{(k)}_{yy})^{-1}$
					\STATE Compute gradient $D \leftarrow \nabla f(S^{(k)})$ by \vspace{0.1em}
					\STATE \hspace{2em}$ D \leftarrow [I_n, ~ -G]^\top [I_n, ~ -G]$ \vspace{0.5ex}
					\STATE Set $\varepsilon \leftarrow \alpha_k \delta \overline{C}$ 
					\STATE Solve the subproblem~\eqref{eq:program:subproblem} by Algorithm~\ref{algorithm:bisection} \vspace{0.5ex}
					\STATE \hspace{2em}$ L \leftarrow \text{Bisection}(\cov, D, \rho, \varepsilon)$ \vspace{0.5ex}
					\STATE Set $S^{(k+1)} \leftarrow S^{(k)} + \alpha_k (L - S^{(k)})$
					\STATE Set $k \leftarrow k+1$
					\ENDWHILE \vspace{0.5em}
					\ENSURE $S^{(k)}$
				\end{algorithmic}
			\end{algorithm} 
		\end{minipage}
	\end{table}
	
	The overall structure of the proposed Frank-Wolfe method is summarized in Algorithm~\ref{algorithm:FW}. We borrow the step-size rule suggested in \cite{jaggi2013revisiting} to establish rigorous convergence guarantees. This is accomplished by showing that the objective function $f$ has a bounded {\em curvature constant}. Our convergence result is formalized in the next theorem. 	
	
	\begin{theorem}[Convergence analysis]
		\label{theorem:convergence}
		If $\cov \succ 0$, $\rho > 0$, $\delta>0$ and $\alpha_k = 2/(2+k)$ for any $k\in\mathbb N$, then the $k$-th iterate $S^{(k)}$ computed by Algorithm~\ref{algorithm:FW} is feasible in \eqref{eq:program:dual} and satisfies
		\begin{equation*}
		f(S\opt) - f(S^{(k)}) \leq \frac{4 \bar\sigma^4}{\ubar\sigma^3(k+2)} (1+\delta),
		\end{equation*}
		where $S\opt$ is an optimal solution of \eqref{eq:program:dual}, $\ubar\sigma$ is the smallest eigenvalue of $\cov$, and $\bar\sigma \Let (\rho + \sqrt{\Tr{\cov}})^2$.
	\end{theorem}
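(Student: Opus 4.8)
The plan is to recognize Algorithm~\ref{algorithm:FW} as an instance of the approximate Frank--Wolfe scheme analyzed in \cite{jaggi2013revisiting}, and to supply the two problem-specific ingredients its convergence guarantee requires: (i) that every iterate $S^{(k)}$ stays inside the feasible set $\mathcal D$ of \eqref{eq:program:dual}, which is convex and compact; and (ii) an explicit bound on the \emph{curvature constant} $C_f$ of $f$ over $\mathcal D$. Once these are in place the stated rate drops out of the standard Frank--Wolfe recursion with step-sizes $\alpha_k=2/(k+2)$, the constant $\overline C=2\bar\sigma^4/\ubar\sigma^3$ serving as an upper bound on $C_f$.

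First I would settle feasibility by induction. The starting point $S^{(0)}=\cov$ is feasible because the Wasserstein constraint reads $\Tr{\cov+\cov-2(\cov^\half\cov\cov^\half)^\half}=0\le\rho^2$ and $\cov\succeq\ubar\sigma I_d$ by definition of $\ubar\sigma$. For the induction step, convexity of the feasible set of \eqref{eq:program:dual}---already used in the reformulation underlying Theorem~\ref{theorem:program:FW}, the only nontrivial point being concavity of $S\mapsto\Tr{(\cov^\half S\cov^\half)^\half}$---together with the feasibility of the oracle output $L$ guaranteed by Theorem~\ref{theorem:approximate} shows that $S^{(k+1)}=(1-\alpha_k)S^{(k)}+\alpha_k L$ is a convex combination of feasible points and hence feasible. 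Along the way I would record the uniform bounds $\ubar\sigma I_d\preceq S\preceq\bar\sigma I_d$ valid on all of $\mathcal D$: the lower bound is a constraint of \eqref{eq:program:dual}, while the upper bound follows from the triangle inequality for the Bures metric $d(S,\cov)=\big(\Tr{S+\cov-2(\cov^\half S\cov^\half)^\half}\big)^{1/2}$, which gives $\sqrt{\Tr S}=d(S,0)\le d(S,\cov)+d(\cov,0)\le\rho+\sqrt{\Tr\cov}$, i.e.\ $\Tr S\le\bar\sigma$ and therefore $\norm S\le\bar\sigma$.

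The crux is the curvature bound. Writing $f(S)=\Tr{S_{xx}}-g(S)$ with $g(S)=\Tr{S_{xy}S_{yy}^{-1}S_{yx}}$, I would compute the second directional derivative of $f$ at a point $\xi\in\mathcal D$ along a direction $H=L-S$ and show that it equals $-2\Tr{M\,\xi_{yy}^{-1}M^\top}$ with $M=H_{xy}-\xi_{xy}\xi_{yy}^{-1}H_{yy}$. This both reconfirms concavity of $f$ (so the Frank--Wolfe lower model is valid) and furnishes $C_f=\sup 2\Tr{M\,\xi_{yy}^{-1}M^\top}$, the supremum being over feasible $\xi$ and $H=L-S$ with $L,S\in\mathcal D$. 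Observing that $M$ is the $y$-block of $[I_n,\,-\xi_{xy}\xi_{yy}^{-1}]H$ and that $\nabla f(\xi)=[I_n,\,-\xi_{xy}\xi_{yy}^{-1}]^\top[I_n,\,-\xi_{xy}\xi_{yy}^{-1}]\succeq 0$, I would bound $\Tr{M\xi_{yy}^{-1}M^\top}\le\ubar\sigma^{-1}\Tr{MM^\top}\le\ubar\sigma^{-1}\norm{\nabla f(\xi)}\,\Tr{H^2}$ and then control $\norm{\nabla f(\xi)}$ and $\Tr{H^2}$ through the bounds $\ubar\sigma I_d\preceq S\preceq\bar\sigma I_d$ to arrive at $C_f\le\overline C=2\bar\sigma^4/\ubar\sigma^3$. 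I expect this estimate---squeezing the second-order remainder into the clean closed form $2\bar\sigma^4/\ubar\sigma^3$ with exactly these powers of $\bar\sigma$ and $\ubar\sigma$---to be the main obstacle, since naive triangle-inequality splittings of $M$ overshoot the target constant, and one must exploit both the positive semidefiniteness of $\nabla f(\xi)$ and the \emph{trace} (rather than merely spectral) control on feasible matrices to keep the constant tight.

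Finally I would assemble the rate exactly as in \cite{jaggi2013revisiting}. The concavity curvature inequality gives, at each step, $f(S^{(k+1)})\ge f(S^{(k)})+\alpha_k\inner{L-S^{(k)}}{\nabla f(S^{(k)})}-\tfrac12\alpha_k^2 C_f$; combining the $\varepsilon$-optimality of $L$ from Theorem~\ref{theorem:approximate} with the concavity bound $\inner{S\opt-S^{(k)}}{\nabla f(S^{(k)})}\ge f(S\opt)-f(S^{(k)})$ yields the recursion $h_{k+1}\le(1-\alpha_k)h_k+\alpha_k\varepsilon_k+\tfrac12\alpha_k^2 C_f$ for the optimality gap $h_k=f(S\opt)-f(S^{(k)})$. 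Substituting $C_f\le\overline C$ and the tolerance $\varepsilon_k$ chosen in Algorithm~\ref{algorithm:FW} (proportional to $\alpha_k\delta\overline C$), a routine induction on $\alpha_k=2/(k+2)$ then gives $h_k\le\frac{4\bar\sigma^4}{\ubar\sigma^3(k+2)}(1+\delta)$, as claimed. Beyond this bookkeeping of constants, the final step requires no new ideas.
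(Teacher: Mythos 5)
Your proposal is correct and follows the paper's overall skeleton---invoke the approximate Frank--Wolfe guarantee of \cite{jaggi2013revisiting} with step-sizes $\alpha_k = 2/(k+2)$ after establishing feasibility of the iterates and the curvature bound $C_{-f}\leq \overline C = 2\bar\sigma^4/\ubar\sigma^3$---but you derive the two auxiliary estimates by genuinely different arguments. For the uniform bound $\Tr{S}\leq\bar\sigma$ on the feasible set, the paper (Lemma~\ref{lemma:compact}) maximizes $\inner{S}{I_d}$ subject to the Wasserstein constraint and solves this in closed form via Lemma~\ref{lemma:oracle}, obtaining the exact maximizer $S\opt = \frac{(\rho+\sqrt{\Tr{\cov}})^2}{\Tr{\cov}}\cov$; your triangle inequality for the Bures metric, $\sqrt{\Tr{S}} = d(S,0)\leq d(S,\cov)+d(\cov,0)\leq \rho+\sqrt{\Tr{\cov}}$, reaches the same constant in two lines and is conceptually cleaner, though the paper's route additionally certifies that the bound is attained. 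For the curvature constant, the paper expands $-f(S+\Delta)$ to second order via a Neumann series, assembles a compressed Hessian $H$ in Kronecker form with $\|H\| = 2\|S_{yy}^{-1}\|\cdot\|\nabla f(S)\|$, and applies $C_{-f}\leq(\mathrm{diam}(\Sc))^2\,\mathrm{Lip}(\nabla f)$ from \cite[Lemma~7]{jaggi2013revisiting}; your direct computation of the second directional derivative of the Schur-complement trace, $-2\Tr{M\,\xi_{yy}^{-1}M^\top}$ with $M = H_{xy}-\xi_{xy}\xi_{yy}^{-1}H_{yy}$, is equivalent (your estimate $2\ubar\sigma^{-1}\|\nabla f(\xi)\|\Tr{H^2}$ is exactly the paper's Lipschitz constant times the squared Frobenius diameter) but avoids the vectorization bookkeeping and reconfirms concavity of $f$ as a byproduct; the remaining spectral estimates $\|\xi_{yy}^{-1}\|\leq\ubar\sigma^{-1}$ and $\|\nabla f(\xi)\|\leq\bar\sigma^2/\ubar\sigma^2$ then coincide with the paper's chain (Cauchy interlacing plus $\ubar\sigma I_d\preceq S\preceq\bar\sigma I_d$). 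One remark: the pressure point you correctly anticipate---getting $\Tr{H^2}\leq\bar\sigma^2$ rather than the naive $2\bar\sigma^2$ from $\Tr{L^2}+\Tr{S^2}$---is present in the paper too, whose diameter step $\|S_1-S_2\|_F\leq\Tr{S_1-S_2}$ is itself delicate for differences of positive semidefinite matrices; so on this point your attempt and the paper's proof stand or fall together, and in the worst case both lose only a benign factor of $2$ in $\overline C$.
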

		
	\section{The Wasserstein Distributionally Robust Kalman Filter}	
	Consider a discrete-time dynamical system whose (unobservable) state $x_t\in\RR^n$ and (observable) output $y_t\in \RR^m$ evolve randomly over time. At any time $t\in\mathbb N$, we aim to estimate the current state~$x_t$ based on the output history $Y_t\Let (y_1,\ldots,y_t)$. We assume that the joint state-output process $z_t = [x_{t}^\top\!, ~ y_{t}^\top]^\top$, $t\in\mathbb N$, is governed by an unknown Gaussian distribution $\QQ$ in the neighborhood of a known nominal distribution~$\PP\opt$. The distribution $\PP\opt$ is determined through the linear state-space~model
	\begin{equation}
	\label{eq:system}
	\left.\begin{split}
	x_{t} &= A_{t} x_{t-1} + B_{t} v_{t} \\
	y_{t} &= C_{t} x_{t} + D_{t} v_{t}
	\end{split}	\right\}\quad \forall t\in\mathbb N,
	\end{equation}
	where $A_{t}$, $B_{t}$, $C_{t}$, and $D_{t}$ are given matrices of appropriate dimensions, while $v_{t} \in \RR^d$, $t\in\mathbb N$, denotes a Gaussian white noise process independent of $x_0\sim \Nc_n(\hat x_0, V_0)$. Thus, $v_{t}\sim \Nc_d (0, I_d)$ for all $t$, while $v_t$ and $v_{t'}$ are independent for all $t\neq t'$. Note that we may restrict the dimension of $v_t$ to the dimension $d=n+m$ of $z_t$ without loss of generality. Otherwise, all linearly dependent columns of $[B_{t}^\top\!, \, D_{t}^\top]^\top$ and the corresponding components of $v_t$ can be eliminated systematically. 
	
	By the law of total probability and the Markovian nature of the state-space model~\eqref{eq:system}, the nominal distribution $\PP\opt$ is uniquely determined by the marginal distribution $\PP\opt_{x_0}=\Nc_n(\hat x_0, V_0)$ of the initial state $x_0$ and the conditional distributions
	\begin{equation*}
	\PP\opt_{z_t| x_{t-1}}=  \Nc_d \left( \begin{bmatrix} A_t \\ C_t A_t \end{bmatrix} x_{t-1}, 
	\begin{bmatrix} B_t \\ C_t B_t + D_t \end{bmatrix} \begin{bmatrix} B_t \\ C_t B_t + D_t \end{bmatrix} ^\top \right)
	\end{equation*}
	of $z_t$ given $x_{t-1}$ for all $t\in\mathbb N$.

	Unlike $\PP\opt$, the true distribution $\QQ$ governing $z_t$, $t\in\mathbb N$, is unknown, and thus the estimation problem at hand is not well-defined. We will therefore estimate the conditional mean $\hat x_t$ and covariance matrix $V_t$ of $x_t$ given $Y_t$ under some worst-case distribution $\QQ\opt$ to be constructed recursively. First, we assume that the marginal distribution $\QQ\opt_{x_0}$ of $x_0$ under $\QQ\opt$ equals $\PP_{x_0}$, that is, $\QQ\opt_{x_0}=\Nc_n(\hat x_0, V_0)$. Next, fix any $t\in\mathbb N$ and assume that the conditional distribution $\QQ\opt_{x_{t-1}|Y_{t-1}}$ of $x_{t-1}$ given $Y_{t-1}$ under $\QQ\opt$ has already been computed as $\QQ\opt_{x_{t-1}|Y_{t-1}}\!\!=\Nc_n(\hat x_{t-1}, V_{t-1})$. The construction of $\QQ\opt_{x_{t}|Y_{t}}$ is then split into a prediction step and an update step. The prediction step combines the previous state estimate $\QQ\opt_{x_{t-1}|Y_{t-1}}$ with the nominal transition kernel $\PP\opt_{z_t| x_{t-1}}$ to generate a pseudo-nominal distribution $\PP_{z_t|Y_{t-1}}$ of $z_t$ conditioned on $Y_{t-1}$, which is defined through
	\begin{equation*}
	\PP_{z_t|Y_{t-1}}(B|Y_{t-1})  = \int_{\RR^n} \, \PP\opt_{z_t| x_{t-1}}(B|x_{t-1}) \QQ\opt_{x_{t-1}|Y_{t-1}}(\diff x_{t-1}|Y_{t-1})
	\end{equation*}
	for every Borel set $B\subseteq \RR^d$ and observation history $Y_{t-1}\in\RR^{m\times (t-1)}$. The well-known formula for the convolution of two multivariate Gaussians reveals that $\PP_{z_t|Y_{t-1}}\!\!=\Nc_d (\mu_t, \cov_t)$, where
	\begin{equation}
	\label{eq:prior}
	\mu_t = \begin{bmatrix} A_t \\ C_t A_t \end{bmatrix} \hat x_{t-1} \quad\text{and}\quad \cov_t = \begin{bmatrix} A_t \\ C_t A_t \end{bmatrix} V_{t-1} \begin{bmatrix} A_t \\ C_t A_t \end{bmatrix}^\top + \begin{bmatrix} B_t \\ C_t B_t + D_t \end{bmatrix} \begin{bmatrix} B_t \\ C_t B_t + D_t \end{bmatrix}^\top.
	\end{equation}
	Note that the construction of $\PP_{z_t|Y_{t-1}}$ resembles the prediction step of the classical Kalman filter but uses the least favorable distribution $\QQ\opt_{x_{t-1}|Y_{t-1}}$
	instead of the nominal distribution $\PP\opt_{x_{t-1}|Y_{t-1}}$.
	
	In the update step, the pseudo-nominal a priori estimate $\PP_{z_t|Y_{t-1}}$ is updated by the measurement $y_t$ and robustified against model uncertainty to yield a refined a posteriori estimate $\QQ\opt_{x_t|Y_{t}}$. This a posteriori estimate is found by solving the minimax problem
	\begin{equation}
	\label{eq:kalman}
	\inf_{\psi_{t} \in \Lc}~ \sup_{\QQ \in \Pc_{z_t|Y_{t-1}} } \EE^{\QQ} \left[ \| x_{t} - \psi_{t}(y_{t}) \|^2 \right]
	\end{equation}
	equipped with the Wasserstein ambiguity set 
	\[
	\Pc_{z_t|Y_{t-1}} = \left\{ \QQ \in \Nc_d: W_2(\QQ, \PP_{z_t|Y_{t-1}}) \leq \rho_t \right\}.
	\]
	Note that the Wasserstein radius $\rho_t$ quantifies our distrust in the pseudo-nominal a priori estimate and can therefore be interpreted as a measure of model uncertainty. Practically, we  reformulate~\eqref{eq:kalman} as an equivalent finite convex program of the form~\eqref{eq:program:dual}, which is amenable to efficient computational solution via the Frank-Wolfe algorithm detailed in Section~\ref{sec:frank-wolfe}. By Theorem~\ref{theorem:program:FW}, the optimal solution~$S_t\opt$ of problem~\eqref{eq:program:dual} yields the least favorable conditional distribution $\QQ\opt_{z_t|Y_{t-1}}\!\!=\Nc_d(\mu_t, S_t\opt)$ of $z_t$ given $Y_{t-1}$. By using the well-known formulas for conditional normal distributions (see, {\em e.g.}, \cite[page~522]{rao1973linear}), we then obtain the least favorable conditional distribution $\QQ\opt_{x_t|Y_{t}}\!\!=\Nc_d(\hat x_t, V_t)$ of $x_t$ given $Y_{t}$, where
	\begin{equation*}
	\hat x_{t} = S_{t, xy}\opt (S_{t, yy}\opt)^{-1} (y_t - \mu_{t, y}) + \mu_{t, x}\quad \text{and}\quad  \quad V_{t} = S\opt_{t, \, xx} - S\opt_{t, \, xy} (S\opt_{t, \, yy})^{-1} S\opt_{t, \, yx}.
	\end{equation*}
	The distributionally robust Kalman filtering approach is summarized in Algorithm~\ref{algorithm:kalman}. Note that the robust update step outlined above reduces to the usual update step of the classical Kalman filter for~$\rho \downarrow 0$.
	
	\begin{table}
		\begin{minipage}[t]{21em}
			\vspace{-1em}
			\begin{algorithm}[H]
				\caption{Robust Kalman filter at time $t$}
				\label{algorithm:kalman}
				\begin{algorithmic}
					\REQUIRE Covariance matrix $V_{t-1} \succeq 0$ \\
					\hspace{2em} State estimate $\hat x_{t-1}$ \\
					\hspace{2em} Wasserstein radius $\rho_t > 0$ \\
					\hspace{2em} Tolerance $\delta > 0$
					\STATE \textbf{Prediction:}
					\STATE \hspace{2em} Form the pseudo-nominal distribution 
					\STATE \hspace{2em} $\PP_{z_t|Y_{t-1}} \!\!= \Nc_d(\mu_t, \cov_t)$ using~\eqref{eq:prior}
					\STATE \textbf{Observation:}
					\STATE \hspace{2em} Observe the output $y_t$
					\STATE \textbf{Update:}
					\STATE \hspace{2em} Use Algorithm~\ref{algorithm:FW} to solve \eqref{eq:kalman}
					\STATE \hspace{4em} $S\opt_{t} \leftarrow \text{Frank-Wolfe} (\cov_t, \mu_t, \rho_t, \delta)$ 			
					\ENSURE $V_{t} = S_{t, xx} - S_{t, xy} (S_{t, yy})^{-1} S_{t, yx}$ \\
					\hspace{2.8em} $\hat x_{t} = S_{t, xy}\opt (S_{t, yy}\opt)^{-1} (y_t - \mu_{t, y}) + \mu_{t, x}$
				\end{algorithmic}
			\end{algorithm}
		\end{minipage} ~
		\begin{minipage}[t]{19em}
			\vspace{0em}
			\begin{figure}[H]
				\centering
				\includegraphics[width=\linewidth]{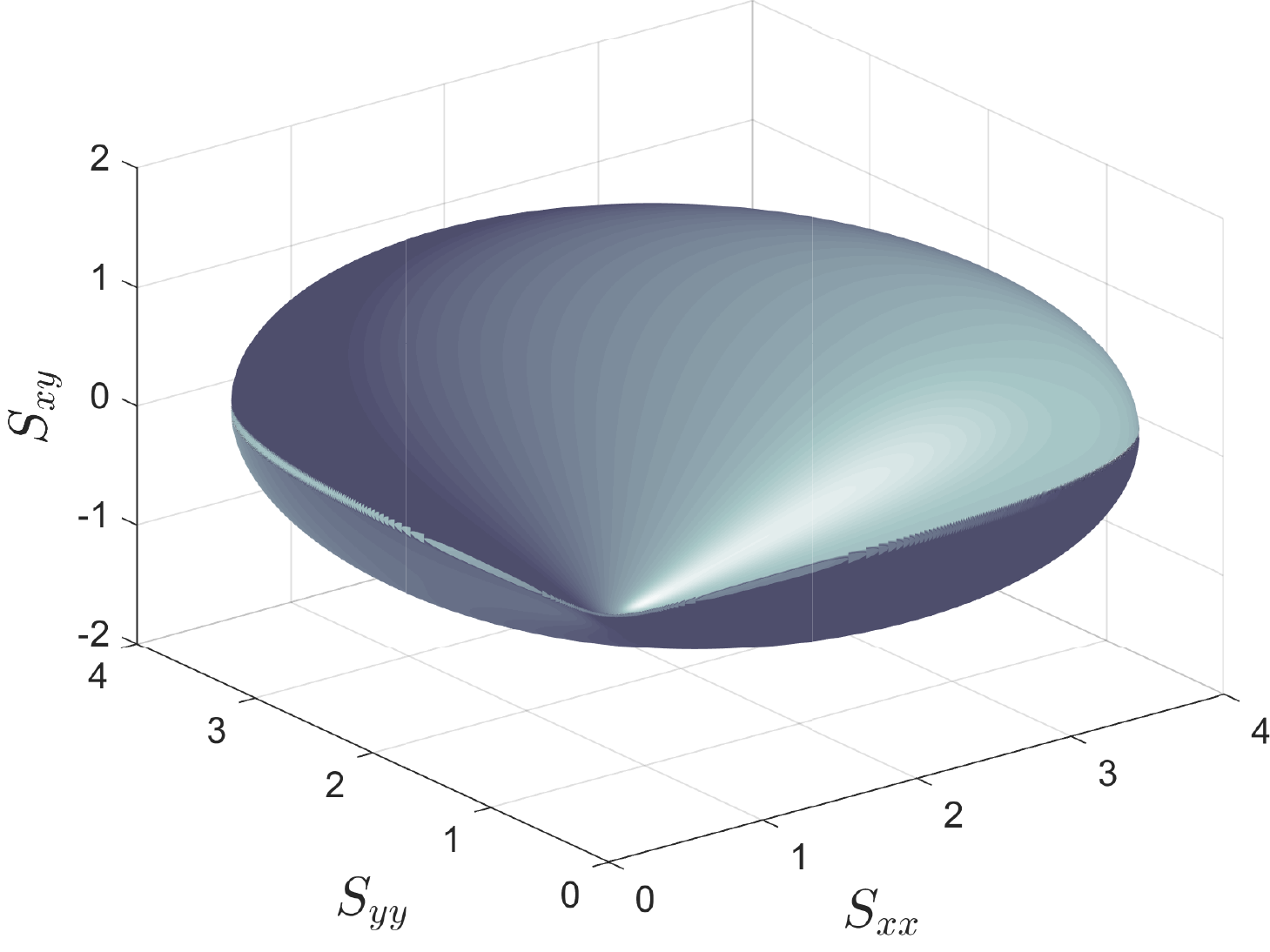}
				\caption{Wasserstein ball in the space $\PSD^2$ of covariance matrices centered at $I_2$ with radius~$1$.}
				\label{fig:ball}
			\end{figure}
		\end{minipage}
		\vspace{-2mm}
	\end{table}
	
	\section{Numerical Results}
	\label{section:numerical}
	We showcase the performance of the proposed Frank-Wolfe algorithm and the distributionally robust Kalman filter in a suite of synthetic experiments. All optimization problems are implemented in MATLAB and run on an Intel XEON CPU with 3.40GHz clock speed and 16GB of RAM, and the corresponding codes are made publicly available at~\url{https://github.com/sorooshafiee/WKF}.
		
	\subsection{Distributionally Robust Minimum Mean Square Error Estimation}
	We first assess the distributionally robust minimum mean square error (robust MMSE) estimator, which is obtained by solving~\eqref{eq:minimax}, against the classical Bayesian MMSE estimator, which can be viewed as the solution of problem~\eqref{eq:minimax} over a singleton ambiguity set that contains only the nominal distribution. Recall from Remark~\ref{rem:divergence} that the optimal estimator corresponding to a KL or $\tau$-divergence ambiguity set of the type studied in \cite{levy2013robust,zorzi2017robust} coincides with the Bayesian MMSE estimator irrespective of~$\rho$. Thus, we may restrict attention to Wasserstein ambiguity sets. In order to develop a geometric intuition, Figure~\ref{fig:ball} visualizes the set of all bivariate normal distributions with zero mean that have a Wasserstein distance of at most 1 from the standard normal distribution---projected to the space of covariance matrices.
	
	In the first experiment we aim to predict a signal $x\in\RR^{4d/5}$ from an observation $y\in\RR^{d/5}$, where the random vector $z = [x^\top\!, ~ y^\top]^\top$ follows a $d$-variate Gaussian distribution with $d\in\{10, 50, 100\}$. The experiment comprises $10^4$ simulation runs. In each run we randomly generate two covariance matrices $\cov\opt$ and $\cov$ as follows. First, we draw two matrices $A\opt$ and $A$ from the standard normal distribution on $\RR^{d\times d}$, and we denote by $R\opt$ and $R$ the orthogonal matrices whose columns correspond to the orthonormal eigenvectors of $A\opt + (A\opt)^\top$ and $A+A^\top$, respectively. Then, we define $\Delta\opt = R\opt \Lambda\opt (R\opt)^\top$ and $\cov= R \Lambda R^\top$, where $\Lambda\opt$ and $\Lambda$ are diagonal matrices whose main diagonals are sampled uniformly from $[0,1]^d$ and $[0.1,10]^d$, respectively. Finally, we set $\cov\opt  = (\cov^{\half} + (\Delta\opt)^{\half})^2$ and define the normal distributions $\PP\opt=\Nc_d(0,\cov\opt)$  and $\PP=\Nc_d(0,\cov)$. By construction, we have
	\begin{equation*}
	W_2(\PP\opt, \PP) \leq \| (\cov\opt)^{\half} -\cov^{\half} \|_F \leq \sqrt{d},
	\end{equation*}
	where $\|\cdot\|_F$ stands for the Frobenius norm, and the first inequality follows from \cite[Proposition~3]{masarotto2018procrustes}.
	We assume that $\PP\opt$ is the true distribution and $\PP$ our nominal prior. The robust MMSE estimator is obtained by solving~\eqref{eq:program:dual} for $\rho = \sqrt{d}$ via the Frank-Wolfe algorithm from Section~\ref{sec:frank-wolfe}, while the Bayesian MMSE estimator under $\PP$ is calculated analytically. In order to provide a meaningful comparison between these two approaches, we also compute the Bayesian MMSE estimator under the true distribution $\PP^\star$ (denoted by MMSE${}^\star$), which is indeed the best possible estimator. Figure~\ref{fig:comparison} visualizes the distribution of the difference between the mean square errors under $\PP\opt$ of the robust MMSE (Bayesian MMSE) and MMSE${}^\star$ estimators. We observe that the robust MMSE estimator produces better results consistently across all experiments, and the effect is more pronounced for larger dimensions~$d$. Figures~\ref{fig:algorithm}(a) and~\ref{fig:algorithm}(b) report the execution time and the iteration complexity of the Frank-Wolfe algorithm for $d\in\{10,\ldots,100\}$ when the algorithm is stopped as soon as the relative duality gap $ \inner{F(S^{k})-S^{k}}{\nabla f(S^{k})} / f(S^{k}) $ drops below $0.01\%$. Note that the execution time grows polynomially due to the matrix inversion in the bisection algorithm. Figure~\ref{fig:algorithm}(c) shows the relative duality gap of the current solution as a function of the iteration count.

	\begin{figure*}
		\centering
		\subfigure[$d=10$]{\label{fig:comparison:a} \includegraphics[width=0.31\columnwidth]{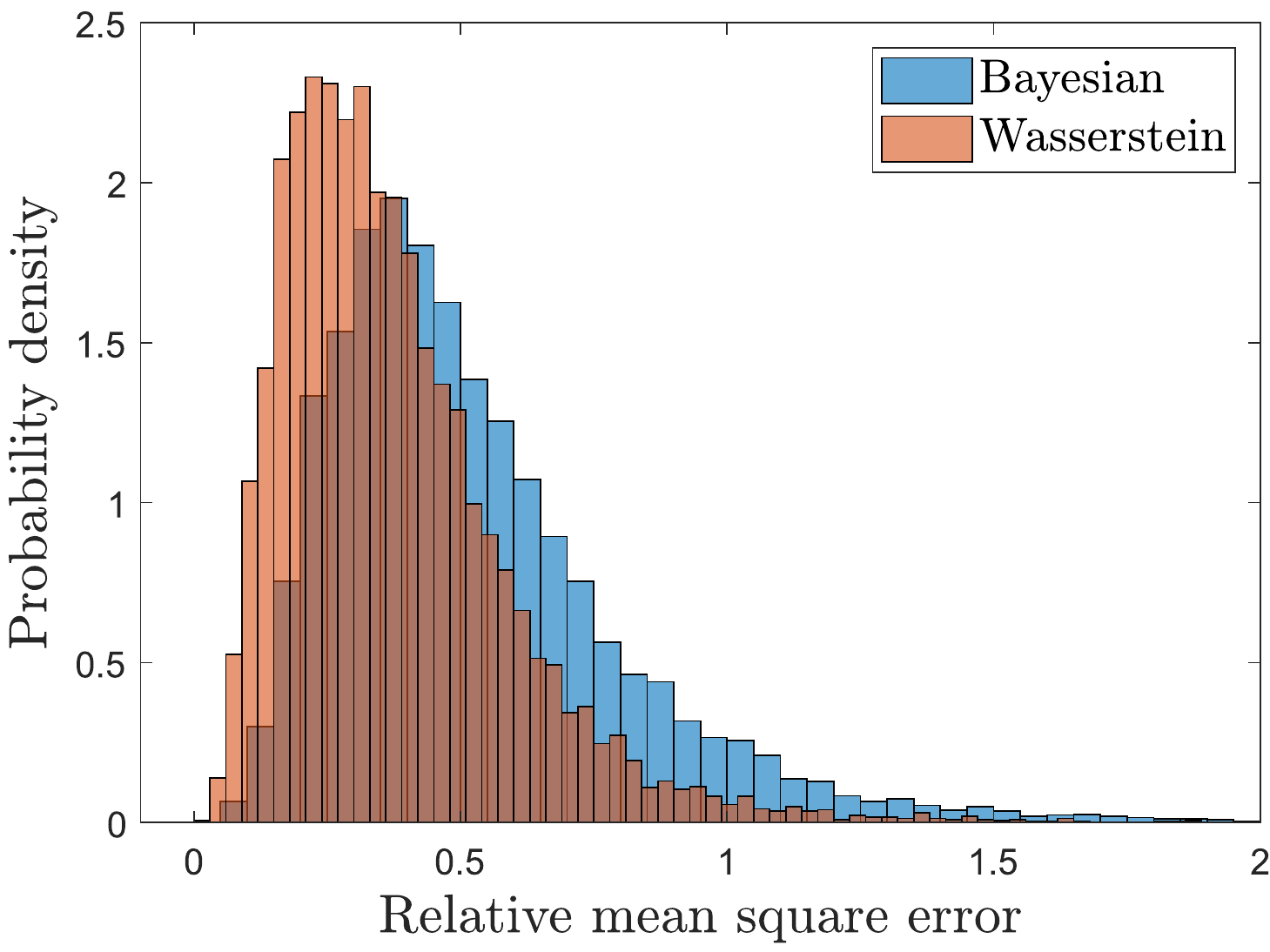}} \hspace{0pt}
		\subfigure[$d=50$]{\label{fig:comparison:b} \includegraphics[width=0.31\columnwidth]{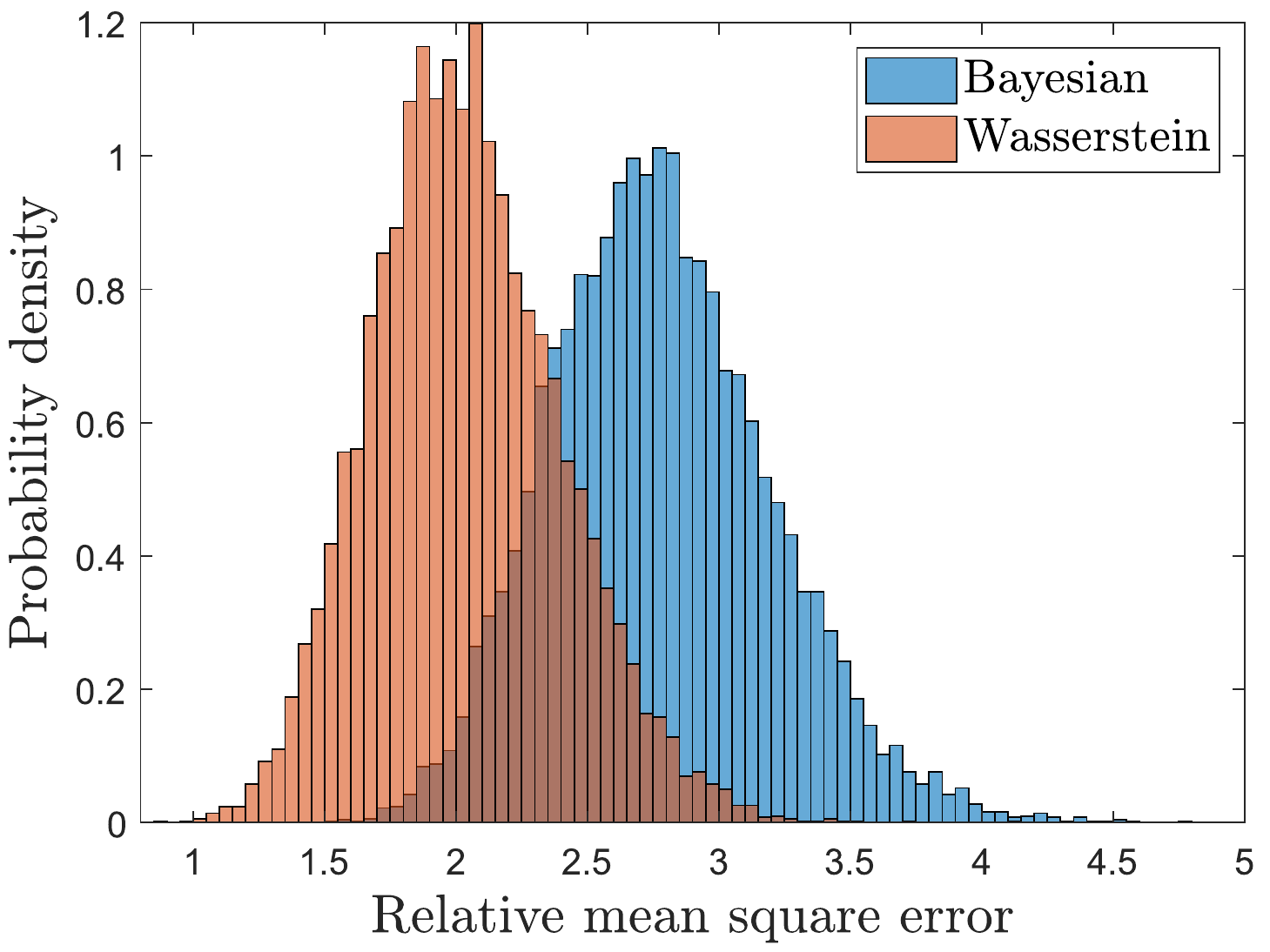}} \hspace{0pt}
		\subfigure[$d=100$]{\label{fig:comparison:c} \includegraphics[width=0.31\columnwidth]{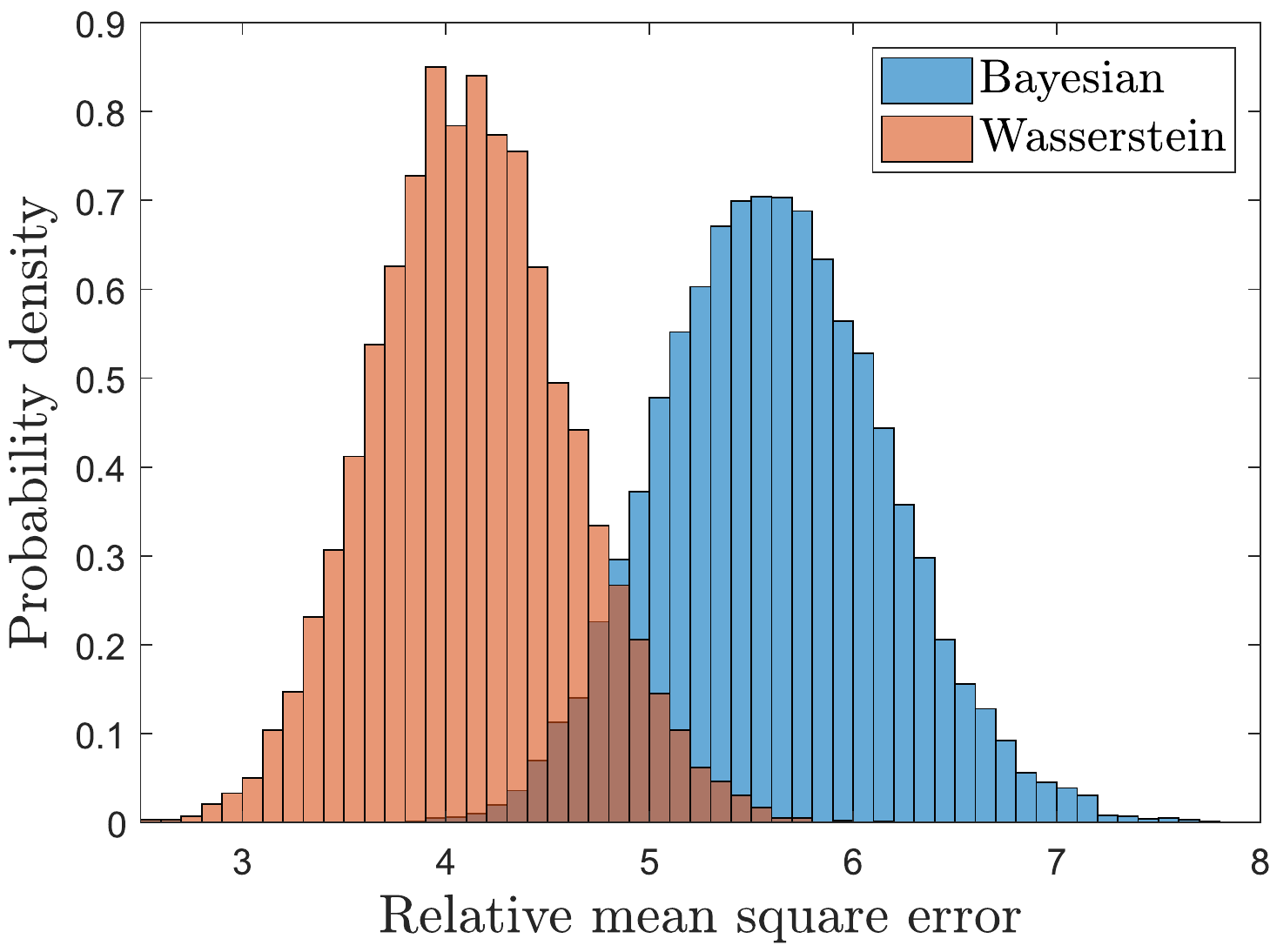}}
		\captionsetup{skip=0pt}
		\caption{Distribution of the difference between the errors of the robust MMSE (Bayesian MMSE) and the ideal MMSE${}^\star$ estimator.}
		\label{fig:comparison}
		\vspace{-5mm}
	\end{figure*}

	\begin{figure*}
		\centering
		\subfigure[Scaling of iteration count]{\label{fig:algorithm:a} \includegraphics[width=0.31\columnwidth]{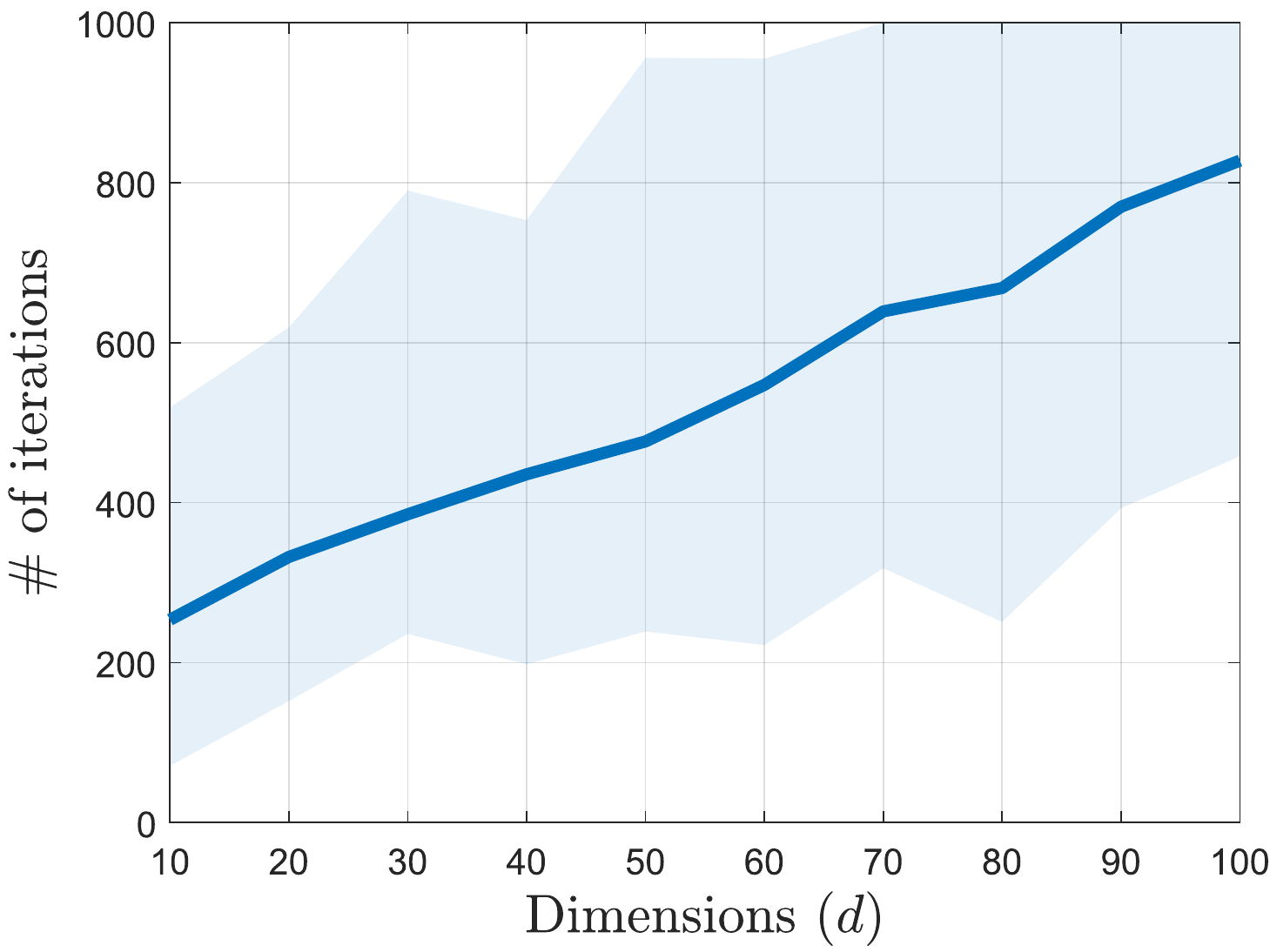}} \hspace{0pt}
		\subfigure[Scaling of execution time]{\label{fig:algorithm:b} \includegraphics[width=0.31\columnwidth]{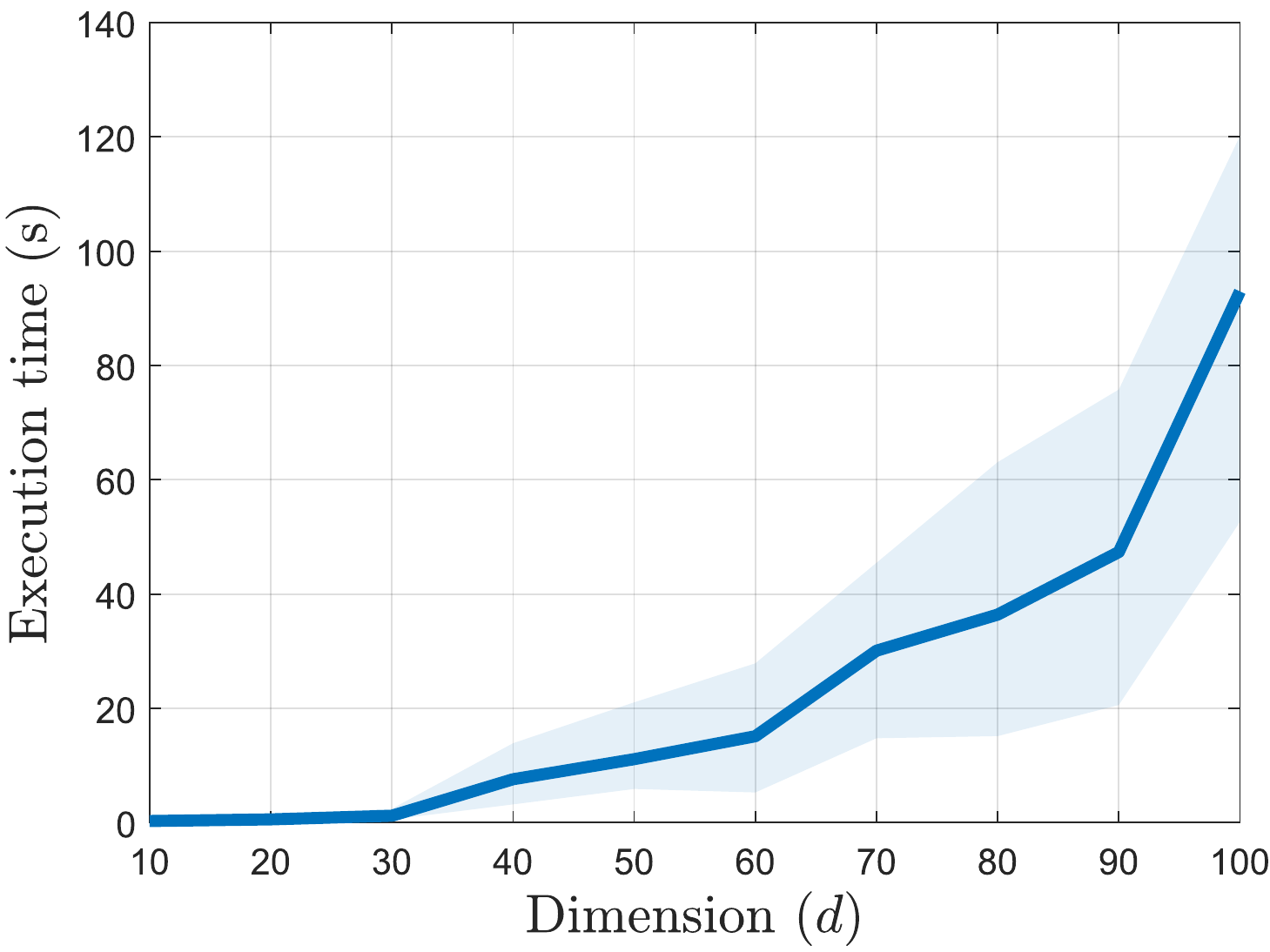}} \hspace{0pt}
		\subfigure[Convergence for $d=100$]{\label{fig:algorithm:c} \includegraphics[width=0.31\columnwidth]{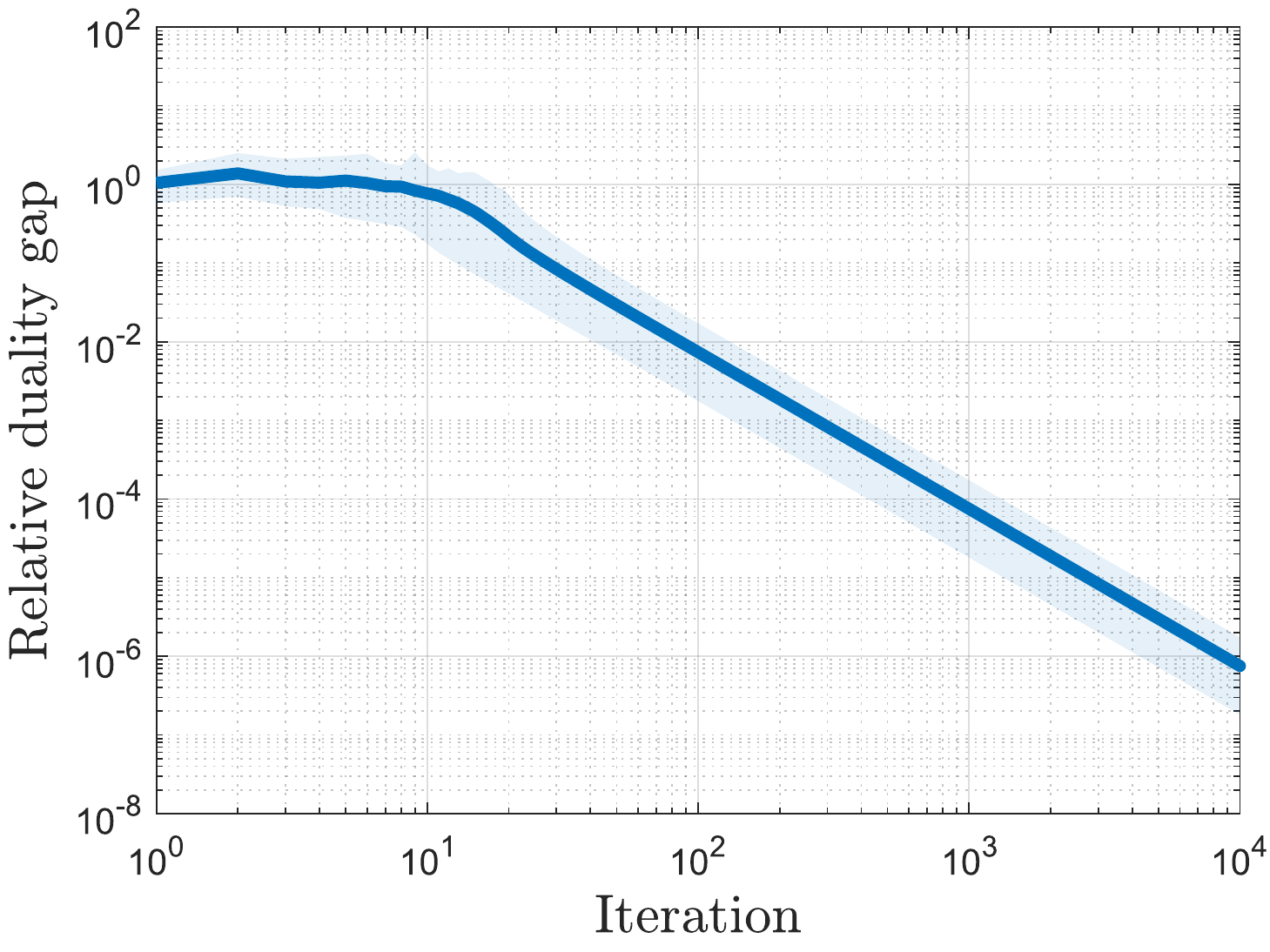}}
		\captionsetup{skip=0pt}
		\caption{Convergence behavior of the Frank-Wolfe algorithm (shown are the average (solid line) and the range (shaded area) of the respective performance measures across $100$ simulation runs)}
		\label{fig:algorithm}
	\end{figure*}

	\subsection{Wasserstein Distributionally Robust Kalman Filtering}
	We assess the performance of the proposed Wasserstein distributionally robust Kalman filter against that of the classical Kalman filter and the Kalman filter with the KL ambiguity set from \cite{levy2013robust}. To this end, we borrow the standard test instance from \cite{sayed2001framework,xu2009kalman,levy2013robust} with $n=2$ and $m=1$. The system matrices satisfy
	\begin{equation*}
	A_t = \begin{bmatrix} 0.9802 & 0.0196 + 0.099\Delta_t \\ 0 & 0.9802 \end{bmatrix}, \,
	B_t B_t^\top = \begin{bmatrix} 1.9608 & 0.0195 \\ 0.0195 & 1.9605 \end{bmatrix}, \,
	C_t = [1, ~ -1], \, 
	D_t D_t^\top = 1, 
	\end{equation*}
	and $B_t D_t^\top=0$, where $\Delta_t$ represents a scalar uncertainty, and the initial state satisfies $x_0\sim \Nc_2(0, I_2)$. In all numerical experiments we simulate the different filters over $1000$ periods starting from $\hat x_0=0$ and $V_0=I_2$. Figure~\ref{fig:kalman} shows the empirical mean square error $\frac{1}{500} \sum_{j=1}^{500} \| x_t^j - \hat x_t^j \|^2$ across $500$ independent simulation runs, where $\hat{x}_t^j$ denotes the state estimate at time $t$ in the $j^{\rm th}$ run. We distinguish four different scenarios: time-invariant uncertainty ($\Delta_t^j = \Delta^j$ sampled uniformly from $[-\bar \Delta,\bar \Delta]$ for each $j$) versus time-varying uncertainty ($\Delta_t^j$ sampled uniformly from $[-\bar \Delta,\bar \Delta]$ for each $t$ and $j$), and small uncertainty ($\bar \Delta=1$) versus large uncertainty ($\bar \Delta=10$). All results are reported in decibel units ($10\log_{10}(\cdot)$). As for the filter design, the Wasserstein and KL radii are selected from the search grids $\{ a \cdot 10^{-1}: a \in \{1, 1.1, \cdots, 2 \} \}$ and $\{ a \cdot 10^{-4}: a \in \{1, 1.1, \cdots, 2 \} \}$, respectively. Figure~\ref{fig:kalman} reports the results with minimum steady state error across all candidate radii.
	
	Under small time-invariant uncertainty (Figure~\ref{fig:kalman}(a)), the Wasserstein and KL distributionally robust filters display a similar steady-state performance but outperform the classical Kalman filter. Note that the KL distributionally robust filter starts from a different initial point as we use the delayed implementation from~\cite{levy2013robust}. Under small time-varying uncertainty (Figure~\ref{fig:kalman}(b)), both distributionally robust filters display a similar performance as the classical Kalman filter. Figures~\ref{fig:kalman}(c) and (d) corresponding to the case of large uncertainty are similar to Figures~\ref{fig:kalman}(a) and (b), respectively. However, the Wasserstein distributionally robust filter now significantly outperforms the classical Kalman filter and, to a lesser extent, the KL distributionally robust filter. Moreover, the Wasserstein distributionally robust filter exhibits the best transient behavior.	
	
	\begin{figure*}
		\vspace{-5mm}
		\centering
		\subfigure[Small time-invariant uncertainty]{\label{fig:kalman:a} \includegraphics[width=0.22\columnwidth]{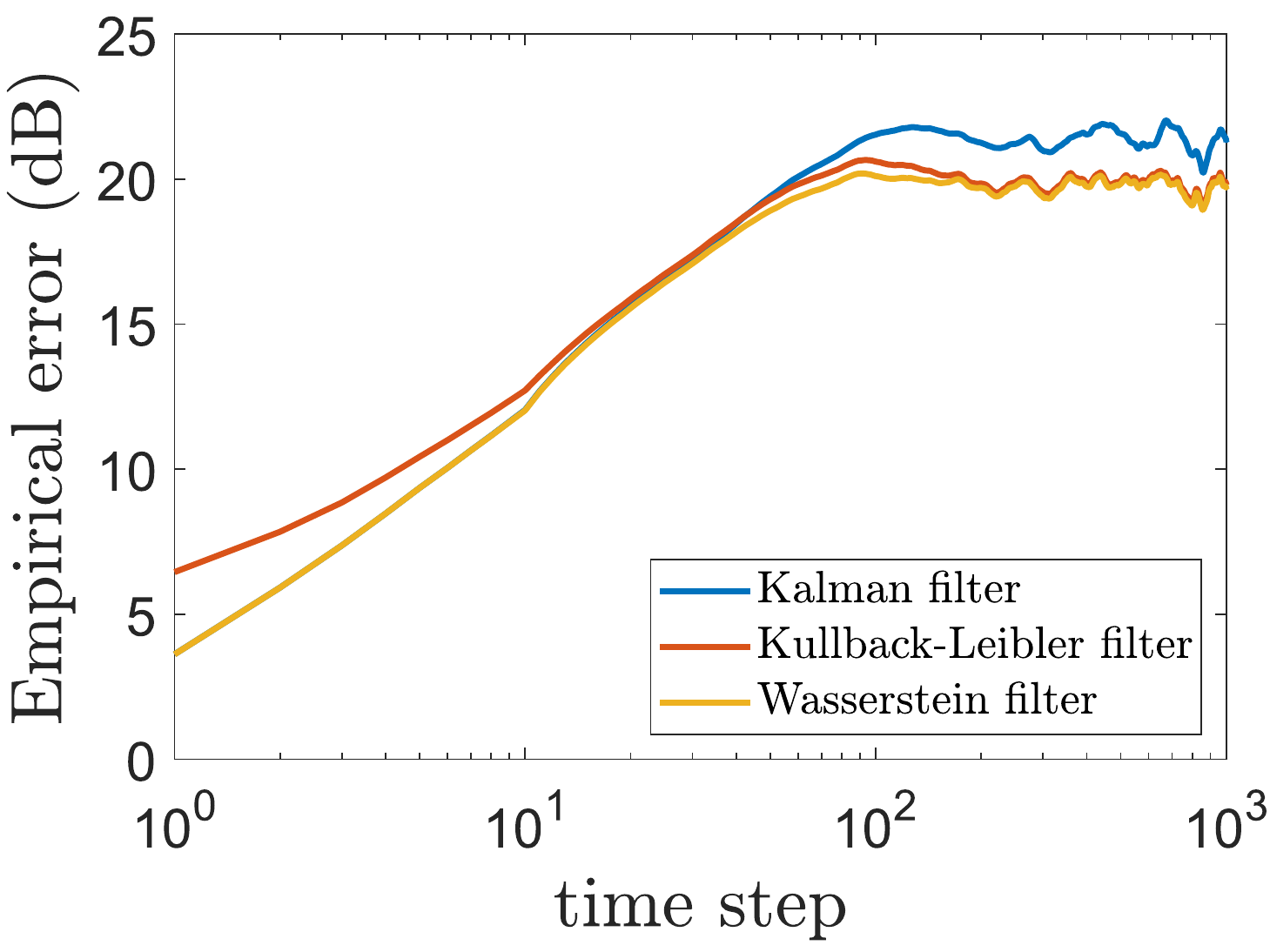}} \hspace{0pt}
		\subfigure[Small time varying uncertainty]{\label{fig:kalman:b} \includegraphics[width=0.22\columnwidth]{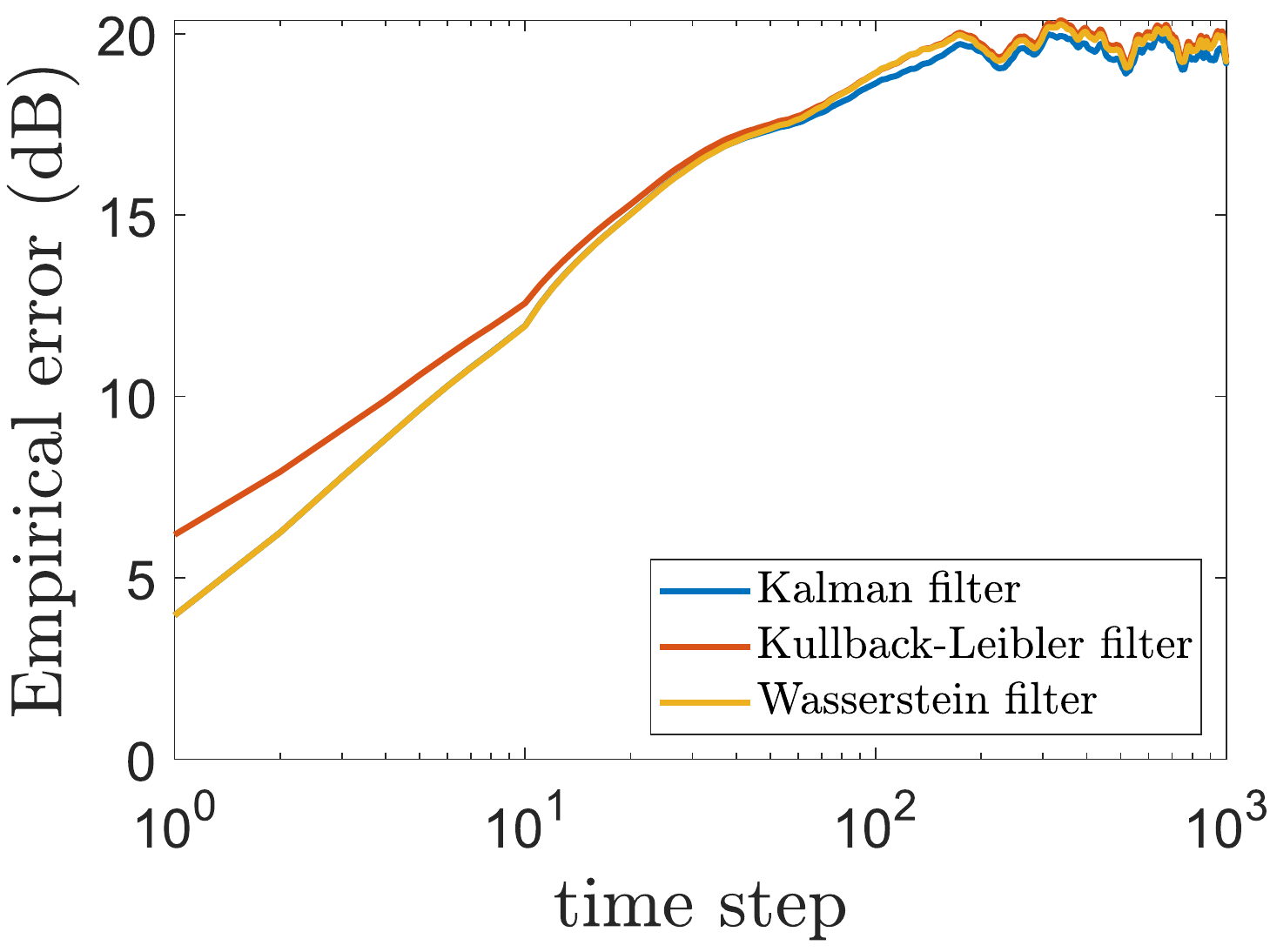}} \hspace{0pt}
		\subfigure[Large time-invariant uncertainty]{\label{fig:kalman:c} \hspace{0pt} \includegraphics[width=0.22\columnwidth]{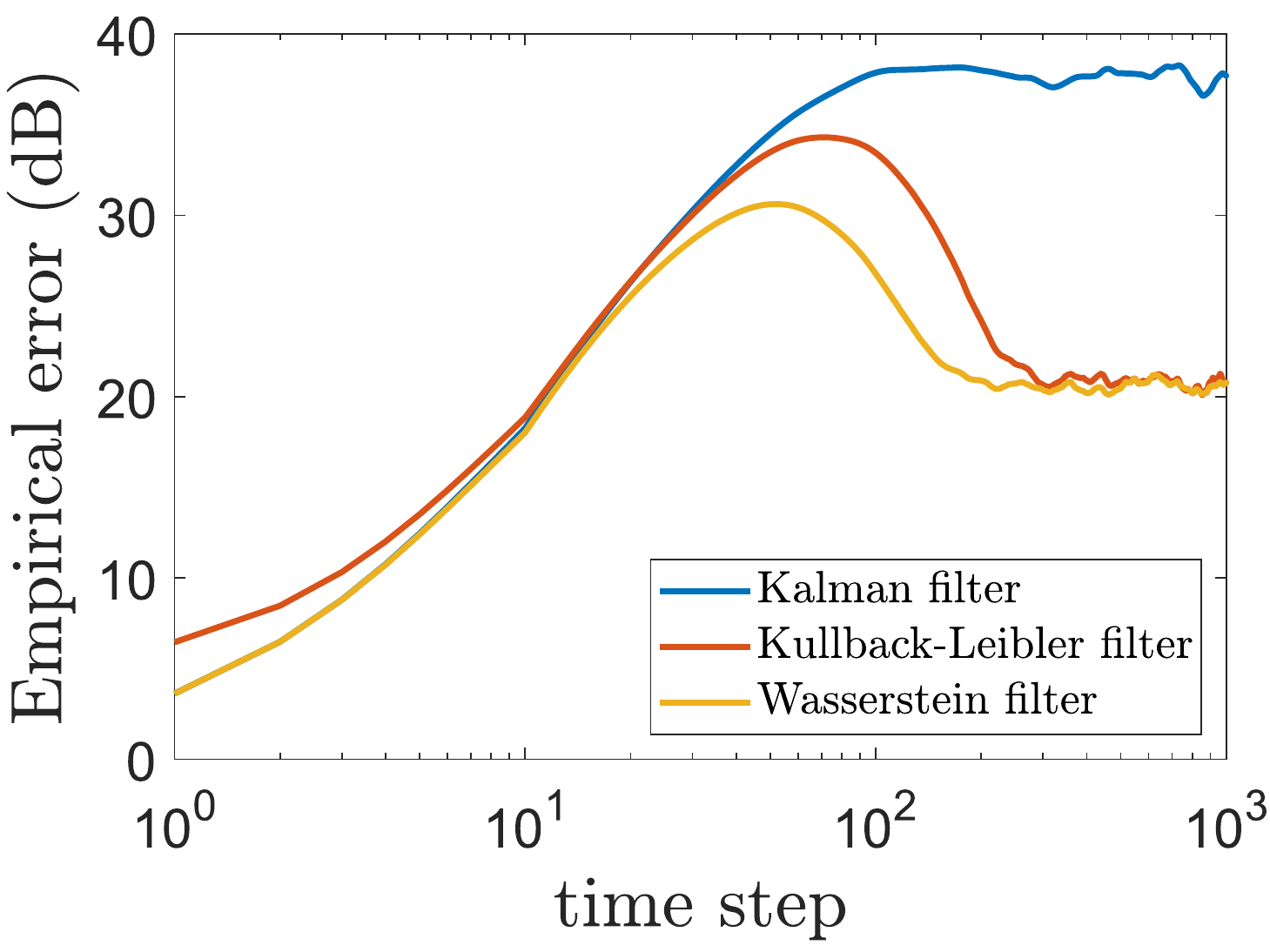}}
		\subfigure[Large time-varying uncertainty]{\label{fig:kalman:d}
			\includegraphics[width=0.22\columnwidth]{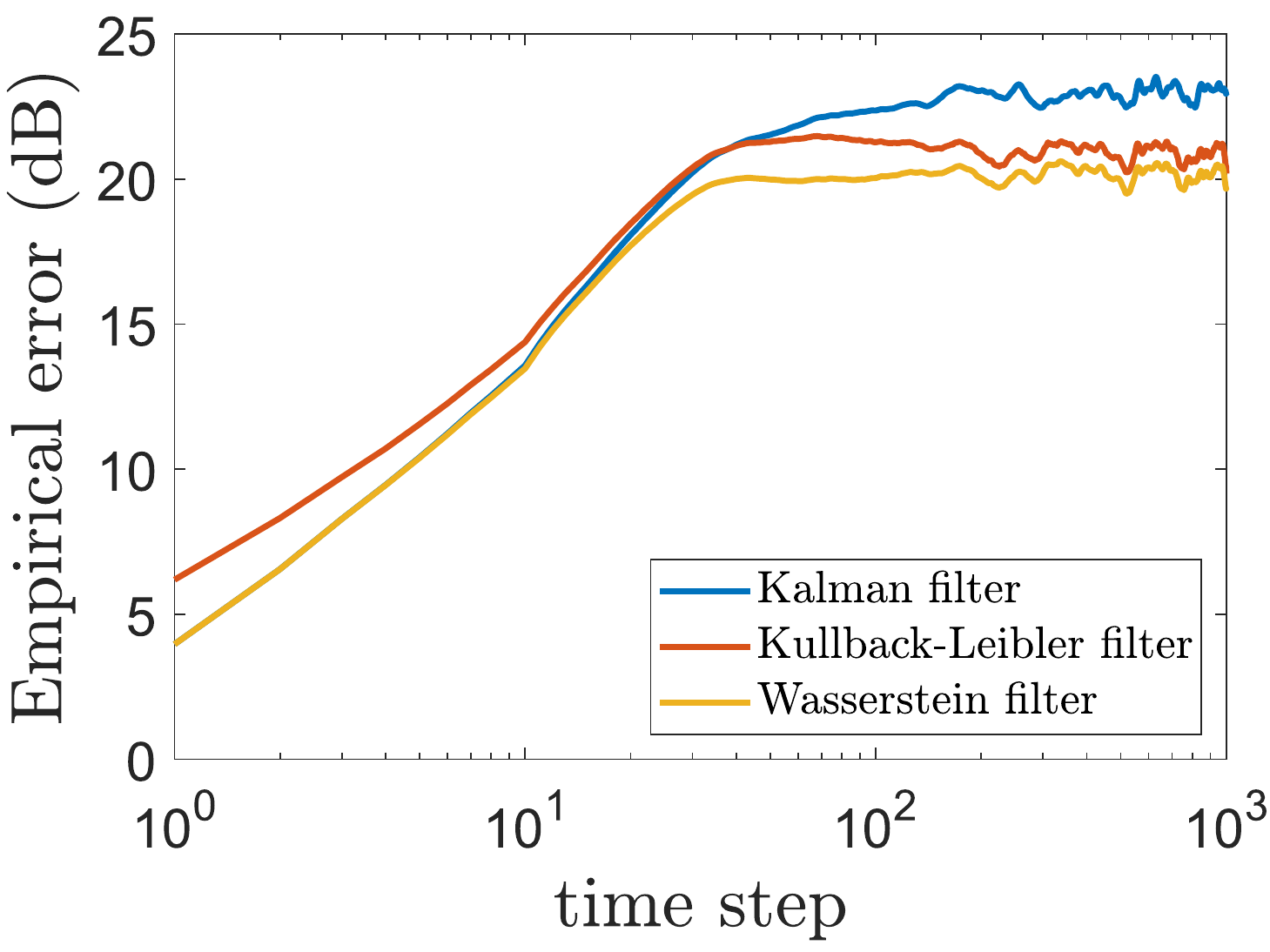}}
		\captionsetup{skip=0pt}
		\caption{Empirical means square estimation error of different filters}
		\label{fig:kalman}
		\vspace{-5mm}
	\end{figure*}

	\appendix
	\renewcommand\thesection{Appendix~\Alph{section}}
	
	\section{Proofs}
	\renewcommand\thesection{\Alph{section}}
	\setcounter{equation}{0}
	\renewcommand{\theequation}{A.\arabic{equation}}
	\subsection{Proof of Theorem~\ref{thm:saddle}}
	The proof of Theorem~\ref{thm:saddle} requires the following preparatory lemma, which we borrow from \cite{nguyen2018distributionally}.
	
	\begin{lemma}[{\cite[Proposition~2.8]{nguyen2018distributionally}}]
		\label{lemma:main}
		For any $\gamma \in \RR_+$, $D \in \PSD^d \backslash \{ 0 \}$ and $ \cov \in \PD^d $, we have
		\begin{equation*}
		\Sup{S \succeq 0} ~ \inner{D}{S} - \gamma \Tr{S - 2 \left( \cov^\half S \cov^\half \right)^\half} 
		= 
		\begin{cases}
		\gamma^2 \inner{(\gamma I_d - D)^{-1}}{\cov} & \text{if } \gamma I_d \succ D, \\
		+\infty & \text{otherwise.}
		\end{cases}
		\end{equation*}
		Moreover, if $\gamma I_d \succ D$, the unique optimal solution of the above maximization problem is given by 
		\[
		S\opt =  \gamma^2 (\gamma I_d - D)^{-1} \cov (\gamma I_d - D)^{-1} .
		\]
	\end{lemma}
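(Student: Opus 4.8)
The plan is to treat \eqref{eq:program:subproblem}-style maximization as a concave program that is smooth in the interior of $\PSD^d$, so that first-order optimality pins down the maximizer in closed form. Writing the objective as
$g(S)\Let \inner{D}{S} - \gamma\Tr{S} + 2\gamma\Tr{\left(\cov^\half S\cov^\half\right)^\half}$,
I would first record that $g$ is concave on $\PSD^d$: the first two terms are linear, while $S\mapsto\Tr{(\cov^\half S\cov^\half)^\half}$ is the composition of the bijective linear map $S\mapsto \cov^\half S\cov^\half$ with the trace function $X\mapsto \Tr{X^\half}$, which is concave on $\PSD^d$ and strictly concave on $\PD^d$. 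The strict concavity is what will eventually deliver uniqueness.

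For the divergent case $\gamma I_d\not\succ D$, I would exhibit an explicit unbounded ray. Choose a unit eigenvector $v$ of $\gamma I_d - D$ associated with a nonpositive eigenvalue, so that $v^\top D v \ge \gamma\|v\|^2$, and evaluate $g$ at $S = t\,vv^\top$. Since $\cov^\half (vv^\top)\cov^\half$ is rank one, the square-root term contributes only $\sqrt{t}\sqrt{v^\top\cov v}$, giving $g(t\,vv^\top) = t\,(v^\top D v - \gamma\|v\|^2) + 2\gamma\sqrt{t}\sqrt{v^\top\cov v}$. If $v^\top D v > \gamma\|v\|^2$ the linear term forces $g\to+\infty$; in the borderline case $v^\top D v = \gamma\|v\|^2$ one has $\gamma I_d\succeq D$ with $D\neq 0$, which forces $\gamma>0$, and the surviving $\sqrt{t}$ term (positive because $\cov\succ 0$) diverges. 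This also covers $\gamma=0$, since $D\succeq 0$, $D\neq 0$ then already violate $\gamma I_d\succ D$.

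In the main case $\gamma I_d\succ D$ (which forces $\gamma>\lambda_{\max}(D)>0$), I would first note coercivity: along any ray $S=tS_0$ with $\Tr{S_0}=1$ the leading coefficient is $\inner{D}{S_0}-\gamma \le \lambda_{\max}(D)-\gamma<0$, so $g\to-\infty$ and the maximum is attained. Next I would differentiate, using $\nabla\Tr{X^\half}=\tfrac12 X^{-\half}$ together with the chain rule, to get $\nabla g(S) = D-\gamma I_d + \gamma\,\cov^\half(\cov^\half S\cov^\half)^{-\half}\cov^\half$. Setting $G\Let \gamma I_d - D\succ 0$ and solving $\nabla g(S)=0$ gives $(\cov^\half S\cov^\half)^\half = \gamma\,\cov^\half G^{-1}\cov^\half$; conjugating by $\cov^{-\half}$ and squaring yields the stationary point $S\opt = \gamma^2 G^{-1}\cov\, G^{-1} = \gamma^2(\gamma I_d - D)^{-1}\cov(\gamma I_d - D)^{-1}$, which is manifestly in $\PD^d$. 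Since $g$ is concave and $S\opt$ is an interior stationary point it is a global maximizer, and strict concavity of the square-root term makes it the unique one.

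Finally I would substitute $S\opt$ back to confirm the value. Using $\Tr{(\cov^\half S\opt\cov^\half)^\half} = \gamma\Tr{G^{-1}\cov}$ and $D=\gamma I_d - G$, the three contributions telescope: the $\gamma^3\Tr{G^{-2}\cov}$ pieces cancel, leaving $g(S\opt)=\gamma^2\Tr{G^{-1}\cov}=\gamma^2\inner{(\gamma I_d - D)^{-1}}{\cov}$. The delicate points, where I would spend the most care, are the differentiation of the matrix square-root trace and the justification that the maximizer lies in the interior $\PD^d$ so that the unconstrained stationarity condition is legitimate; the coercivity estimate together with the explicit positive definiteness of $S\opt$ are exactly what make this step rigorous.
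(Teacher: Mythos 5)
Your proof is correct. Note, however, that the paper itself offers no proof of this lemma: it is imported verbatim as \cite[Proposition~2.8]{nguyen2018distributionally}, so there is no internal argument to compare against, and what you have produced is a self-contained derivation of a borrowed result. Your route is the natural convex-analytic one and all the key computations check out: the concavity of $S \mapsto \Tr{\big(\cov^\half S \cov^\half\big)^\half}$ (composition of an injective congruence with the concave spectral function $\Tr{X^\half}$), the rank-one rays $S = t\,vv^\top$ giving $g(t\,vv^\top) = t\,(v^\top D v - \gamma) + 2\gamma\sqrt{t}\sqrt{v^\top \cov v}$ in the degenerate case, the gradient $\nabla g(S) = D - \gamma I_d + \gamma\,\cov^\half\big(\cov^\half S \cov^\half\big)^{-\half}\cov^\half$, the closed-form stationary point $S\opt = \gamma^2 (\gamma I_d - D)^{-1}\cov\,(\gamma I_d - D)^{-1} \succ 0$, and the telescoping evaluation $g(S\opt) = \gamma^2 \inner{(\gamma I_d - D)^{-1}}{\cov}$. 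Two small points deserve polish. First, in the borderline subcase your inference ``$v^\top D v = \gamma \Rightarrow \gamma I_d \succeq D$'' only holds if $v$ is chosen as an eigenvector of the \emph{smallest} eigenvalue of $\gamma I_d - D$; say so explicitly (if some eigenvalue is strictly negative, the linearly divergent ray applies, and only when the minimal eigenvalue is zero do you get $\gamma I_d \succeq D$ and hence $\gamma \geq \lambda_{\max}(D) > 0$, making the $\sqrt{t}$ term divergent). Second, ray-wise divergence of $g$ does not by itself yield attainment; either upgrade it to the uniform bound $g(S) \leq (\lambda_{\max}(D)-\gamma)\Tr{S} + 2\gamma\sqrt{d\,\|\cov\|\,\Tr{S}}$, which shows the superlevel sets are bounded, or simply observe that attainment is not needed at all: once the interior stationary point $S\opt \succ 0$ is exhibited, the gradient inequality for the concave function $g$ at $S\opt$ gives $g(S) \leq g(S\opt)$ for every $S \succeq 0$ directly. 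With the same interior-point observation, your uniqueness claim is also airtight: any second maximizer would force $g$ to be affine along a segment meeting $\PD^d$, contradicting strict concavity there.
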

	
	\begin{proof}[Proof of Theorem~\ref{thm:saddle}]
		The optimal value of the minimax problem~\eqref{eq:minimax} satisfies
		\begin{subequations}
			\label{eq:duality}
			\begin{align}
			\inf_{\psi \in \Lc} \, \sup_{\QQ \in \Ambi} \, \EE^{\QQ} \left[ \| x - \psi(y) \|^2 \right] 
			&\geq \sup_{\QQ \in \Ambi} \, \inf_{\psi \in \Lc} \, \EE^{\QQ} \left[ \| x - \psi(y) \|^2 \right] \label{eq:duality:1}\\
			&= \sup_{\QQ \in \Ambi} \, \inf_{G, g} \, \EE^{\QQ} \left[ \| x - Gy - g \|^2 \right] \label{eq:duality:2},
			\end{align}		
		\end{subequations}
		where~\eqref{eq:duality:1} follows from the max-min inequality, and~\eqref{eq:duality:2} holds because the inner minimization problem over $\psi$ is solved by the conditional expectation function $\psi\opt(y)=\EE^{\QQ} [x|y]$, which is affine in $y$ for every fixed Gaussian distribution $\QQ\in\Ambi$, see, {\em e.g.}, \cite[page~522]{rao1973linear}. Without loss of generality, one can thus restrict the set of measurable functions $\Lc$ to the set of affine functions parametrized by a sensitivity matrix $G \in \RR^{n \times m}$ and an intercept vector $g \in \RR^{n}$. Recalling the definition of the Wasserstein ambiguity set $\Pc$ in~\eqref{eq:ambiguity} and encoding each normal distribution $\QQ\in\Pc$ by its mean vector $c\in\RR^d$ and covariance matrix $S\in\PSD^d$, we can use Proposition~\ref{prop:givens} to reformulate~\eqref{eq:duality:2} as
		\begin{subequations}
			\begin{equation}
			\label{eq:bound:0}
			\begin{array}{cl}
			\sup & \Inf{G, g}~ \inner{I_n}{S_{xx} + c_x c_x^\top} + \inner{G^\top G}{S_{yy} + c_y c_y^\top} - \inner{G}{S_{xy} + c_x c_y^\top} \\ 
			& \qquad - \inner{G^\top}{S_{yx} + c_y c_x^\top} + 2 \inner{g}{G c_y - c_x} + g^\top g \\ [2ex]
			\st & c \in \RR^d, \quad c_x \in \RR^n, \quad c_y \in \RR^m  \\ [1ex]
			& S \in \PSD^d, \quad S_{xx} \in \PSD^n, \quad S_{yy} \in \PSD^m, \quad S_{xy}=S_{yx}^\top \in \RR^{n \times m} \\ [1ex]
			& c = \begin{bmatrix}
			c_x \\ c_y
			\end{bmatrix}, \,
			S = \begin{bmatrix}
			S_{xx} & S_{xy} \\
			S_{yx} & S_{yy}
			\end{bmatrix}  \succeq 0 \\ [1ex]
			& \| c - \mu \|^2 + \Tr{S + \cov - 2 \left( \cov^\half S \cov^\half \right)^\half} \leq \rho^2. \\
			\end{array}	
			\end{equation}
			Solving the inner minimization problem over $g$ analytically and substituting the optimal solution $ g\opt = c_x-Gc_y$
			back into the objective function shows that \eqref{eq:bound:0} is equivalent to
			\begin{equation}
			\label{eq:bound:1}
			\begin{array}{cl}
			\sup & ~ \Inf{G} \, \left\langle \begin{bmatrix} I_n & -G \\ -G^\top & G^\top G \end{bmatrix}, S \right\rangle \\[2ex] 
			\st & ~ c \in \RR^d, \quad S \in \PSD^d \\
			& ~ \| c - \mu \|^2 + \Tr{S + \cov - 2 \left( \cov^\half S \cov^\half \right)^\half} \leq \rho^2.
			\end{array}
			\end{equation}
			The minimization over $G$ may now be interchanged with the maximization over $c$ and $S$ by using the classical minimax theorem \cite[Proposition~5.5.4]{bertsekas2009convex}, which applies because $c$ and $S$ range over a compact feasible set. The inner maximization problem over $c$ is then solved by $c\opt = \mu$, which maximizes the slack of the Wasserstein constraint. Thus, the minimax problem~\eqref{eq:bound:1} simplifies to
			\begin{equation}
			\label{eq:bound1.5}
			\begin{split}
			\Inf{G} \quad \Sup{S \succeq 0} \quad & \left\langle \begin{bmatrix} I_n & -G \\ -G^\top & G^\top G \end{bmatrix}, S \right\rangle \\ 
			\st \quad \, & \Tr{S + \cov - 2 \left( \cov^\half S \cov^\half \right)^\half} \leq \rho^2.
			\end{split}
			\end{equation}
			Assigning a Lagrange multiplier $\gamma\ge 0$ to the Wasserstein constraint and dualizing the inner maximization problem yields
			\begin{equation}
			\label{eq:bound2}
			\Inf{G} ~\Inf{\gamma\ge 0}~ \Sup{S \succeq 0} ~ \left\langle \begin{bmatrix} I_n & -G \\ -G^\top & G^\top G \end{bmatrix}, S \right\rangle +\gamma \left(\rho^2- \Tr{S + \cov - 2 \left( \cov^\half S \cov^\half \right)^\half}\right).
			\end{equation}
		\end{subequations}
		Strong duality holds because $S=\cov\succ 0$ represents a Slater point for the primal maximization problem. 
		Finally, by using Lemma~\ref{lemma:main}, problem~\eqref{eq:bound2} can be reformulated as
		\begin{equation}
		\label{eq:program:primal}
		\begin{split}
		\inf \,\quad & 
		\gamma \left( \rho^2 - \Tr{\cov} \right) + \gamma^2 \inner{(\gamma I_d - [I_n, ~ -\!G]^\top [I_n, ~ -\!G])^{-1}}{\cov} \\
		\, \st \quad & G \in \RR^{n \times m}, \quad \gamma \in \RR_+  \\
		& \gamma I_d \succ [I_n, ~ -\!G]^\top [I_n, ~ -\!G].
		\end{split}
		\end{equation}
		By construction, the optimal value of~\eqref{eq:program:primal} provides a lower bound on that of the minimax problem~\eqref{eq:minimax}. Next, we construct an upper bound by restricting $\Lc$ to the class of affine estimators.
		\begin{equation}
		\label{eq:upperbound0}
		\inf_{\psi \in \Lc} \, \sup_{\QQ \in \Ambi} \, \EE^{\QQ} \left[ \| x - \psi(y) \|^2 \right] 
		\leq \inf_{G, g} \, \sup_{\QQ \in \Ambi} \, \EE^{\QQ} \left[ \| x - Gy - g \|^2 \right]
		\end{equation}
		As $\Ambi$ is non-convex, we cannot simply use Sion's minimax theorem to show that the right-hand side of \eqref{eq:upperbound0} equals \eqref{eq:duality:2}. Instead, we need a more involved argument. Recalling the definition of $\Pc$ in~\eqref{eq:ambiguity} and encoding each normal distribution $\QQ\in\Pc$ by its mean vector $c\in\RR^d$ and covariance matrix $S\in\PSD^d$, we can use Proposition~\ref{prop:givens} to reformulate the right-hand side of \eqref{eq:upperbound0} as
		\begin{subequations}
			\begin{equation}
			\label{eq:upperbound}
			\begin{array}{ccl}
			\Inf{G, g}& \sup & \inner{I_n}{S_{xx} + c_x c_x^\top} + \inner{G^\top G}{S_{yy} + c_y c_y^\top} - \inner{G}{S_{xy} + c_x c_y^\top} \\ 
			&& \qquad - \inner{G^\top}{S_{yx} + c_y c_x^\top} + 2 \inner{g}{G c_y - c_x} + g^\top g \\ [2ex]
			&\st & c \in \RR^d, \quad c_x \in \RR^n, \quad c_y \in \RR^m \\ [1ex]
			&& S \in \PSD^d, \quad S_{xx} \in \PSD^n, \quad S_{yy} \in \PSD^m, \quad S_{xy} =S_{yx}^\top \in \RR^{n \times m} \\ [1ex]
			&& c = \begin{bmatrix}
			c_x \\ c_y
			\end{bmatrix}, \,
			S = \begin{bmatrix}
			S_{xx} & S_{xy} \\
			S_{yx} & S_{yy}
			\end{bmatrix}  \succeq 0 \\ [1ex]
			&& \| c - \mu \|^2 + \Tr{S + \cov - 2 \left( \cov^\half S \cov^\half \right)^\half} \leq \rho^2. \\
			\end{array}	
			\end{equation}
			Next, we introduce the set $\Cc \Let \{ c \in \RR^d: \| c - \mu \| \leq \rho \}$ as well as the auxiliary functions 
			\[
			D(G) \Let \begin{bmatrix} I_n & -G \\ -G^\top & G^\top G \end{bmatrix}
			\quad \text{and}\quad b(G,g)\Let  \begin{bmatrix} -g \\ G^\top g \end{bmatrix}
			\]
			to reformulate problem~\eqref{eq:upperbound} as
			\begin{equation}
			\label{eq:robust}
			\begin{array}{ccl}
			\Inf{G, g}& \Sup{\substack{c \, \in \, \Cc \\ S \succeq 0}} & \inner{D(G)}{S + c \, c^\top} + 2 \inner{b(G,g)}{c} + g^\top g \\ 
			&\st & \| c - \mu \|^2 + \Tr{S + \cov - 2 \left( \cov^\half S \cov^\half \right)^\half} \leq \rho^2. \\
			\end{array}	
			\end{equation}	
			We emphasize that the constraint $ c \in \Cc $ is redundant in~\eqref{eq:robust} but will facilitate further simplifications below. Note also that $D(G)\succeq 0$, and thus  the minimax problem~\eqref{eq:robust} involves a cumbersome convex maximization problem over $c$. By employing a penalty formulation of the Wasserstein constraint, the inner maximization problem over $c$ and $S$ in~\eqref{eq:robust} can be re-expressed as
			\begin{align*}
			\Sup{\substack{c \, \in \, \Cc \\ S \succeq 0}} ~  \Inf{\gamma \geq 0} ~&  \inner{D(G)}{S + c \, c^\top} + 2 \inner{b(G,g)}{c} + g^\top g \\[-2ex]
			&  + \gamma \left( \rho^2 - \| c - \mu \|^2 + \Tr{S + \cov - 2 \left( \cov^\half S \cov^\half \right)^\half} \right).
			\end{align*}		
			Here, the minimization over $\gamma$ and the maximization over $S$ may be interchanged by strong duality, which holds because $S=\cov\succ 0$ constitutes a Slater point for the primal problem, see, {\em e.g.}, \cite[Proposition~5.3.1]{bertsekas2009convex}. We note that when $\|c - \mu\| = \rho$, the feasible set of $S$ reduces to a singleton, and thus strong duality holds trivially. The emerging inner maximization problem over $S$ can then be solved analytically by using Lemma~\ref{lemma:main}. In summary, the minimax problem~\eqref{eq:robust} is equivalent~to
			\begin{equation}
			\label{eq:robust2}
			\begin{array}{cccl}
			\Inf{G, g} & \Sup{c \, \in \, \Cc} & \Inf{\gamma \geq 0} & 
			\inner{D(G)}{c \, c^\top} + 2 \inner{b(G,g)}{c} + g^\top g + \gamma \left( \rho^2 - \| c - \mu \|^2 - \Tr{\cov} \right) \\
			&&& \qquad + \gamma^2 \inner{(\gamma I_d - D(G))^{-1}}{\cov} \\ [1ex]
			&& \, \st & \gamma I_d \succ D(G).\\
			\end{array}
			\end{equation}
			Observe now that the optimal value function of the innermost minimization problem over $\gamma$ in \eqref{eq:robust2} is convex in $g$ and, thanks to the constraint $\gamma I_d - D(G) \succ 0$, concave in $c$ for every fixed $G$. By the classical minimax theorem \cite[Proposition~5.5.4]{bertsekas2009convex}, which applies because $c$ ranges over the compact set $\Cc$,  we may thus interchange the infimum over $g$ with the supremum over $c$. After replacing $D(G)$ and $b(G,g)$ with their definitions, it becomes clear that the innermost minimization problem over $g$ admits the analytical solution $ g\opt = \mu_x -G\mu_y$. 
			Thus, problem~\eqref{eq:robust2} is equivalent to
			\begin{equation}
			\label{eq:robust3}
			\begin{array}{cccl}
			\Inf{G} & \Sup{c \, \in \, \Cc} & \Inf{\gamma \geq 0} & 
			\gamma \left( \rho^2 - \| c - \mu \|^2 - \Tr{\cov} \right) + \gamma^2 \inner{(\gamma I_d - [I_n, ~ -\!G]^\top [I_n, ~ -\!G])^{-1}}{\cov} \\
			&& \, \st & \gamma I_d \succ [I_n, ~ -\!G]^\top [I_n, ~ -\!G].\\
			\end{array}
			\end{equation}
		\end{subequations}
		By invoking the minimax theorem \cite[Proposition~5.5.4]{bertsekas2009convex} once again, the inner infimum over $\gamma$ can be interchanged with the supremum over $c$. As the resulting inner maximization problem over $c$ is solved by $c\opt=\mu$, problem~\eqref{eq:robust3} is thus equivalent to~\eqref{eq:program:primal}. In summary, we have shown that~\eqref{eq:program:primal} provides both an upper bound on the left-hand side of~\eqref{eq:duality} as well as a lower bound on the right-hand side of~\eqref{eq:duality}. Thus, the inequality in~\eqref{eq:duality} is in fact an equality.
	\end{proof}
	
	\subsection{Proof of Theorem~\ref{theorem:program:FW}}
	The proof of Theorem~\ref{theorem:program:FW} relies on the following lemma, which extends a similar result from \cite{nguyen2018distributionally}.
	\begin{lemma}[Analytical solution of direction-finding subproblem]
		\label{lemma:oracle}
		For any fixed $\cov \in \PD^d$ 
		and $D \in \PSD^d \backslash \{0\}$, the optimization problem
		\begin{equation*}
		\begin{array}{c@{\quad}l}
		\Sup{S \in \PSD^d} & \inner{S}{D} \\
		\st & \Tr{S + \cov - 2 \left( \cov^\half S \cov^\half \right)^\half} \leq \rho^2
		\end{array}
		\end{equation*}
		is solved by 
		\begin{equation*}
		S\opt = \left( \gamma\opt \right)^2 (\gamma\opt I_d - D)^{-1} \cov (\gamma\opt I_d - D)^{-1}, 
		\end{equation*}
		where $ \gamma\opt$ is the unique solution with $\gamma\opt I_d \succ D$ of the algebraic equation 
		\begin{equation*}
		\rho^2 - \inner{\cov}{\left( I_d - \gamma\opt (\gamma\opt I_d - D)^{-1} \right)^2}=0.
		\end{equation*}
		Moreover, we have $S\opt\succeq \ubar \sigma I_d$, where $\ubar\sigma \Let \lambda_{\rm min}(\cov)$.
	\end{lemma}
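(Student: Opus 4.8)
The plan is to solve the problem by Lagrangian duality, reducing the matrix program to a one-dimensional minimization that Lemma~\ref{lemma:main} resolves analytically. First I would dualize the single Wasserstein constraint with a multiplier $\gamma\ge0$, obtaining
\[
\Inf{\gamma\ge 0}~\Sup{S\succeq 0}~\inner{S}{D}+\gamma\left(\rho^2-\Tr{S+\cov-2(\cov^\half S\cov^\half)^\half}\right).
\]
Strong duality is justified because $S=\cov\succ0$ is a Slater point: the Wasserstein term vanishes there, so the constraint holds strictly whenever $\rho>0$. Pulling out the constant $\gamma(\rho^2-\Tr{\cov})$, the inner supremum is precisely the one evaluated in Lemma~\ref{lemma:main}, which is finite only if $\gamma I_d\succ D$ and then equals $\gamma^2\inner{(\gamma I_d-D)^{-1}}{\cov}$. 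The dual therefore reduces to minimizing
\[
g(\gamma)\Let\gamma^2\inner{(\gamma I_d-D)^{-1}}{\cov}+\gamma(\rho^2-\Tr{\cov})
\]
over $\gamma>\lambda_{\max}(D)$; since $D\neq0$ gives $\lambda_{\max}(D)>0$, the constraint $\gamma\ge0$ is automatically inactive.

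Next I would establish that $g$ has a unique minimizer determined by the stated equation. Diagonalizing $D=\sum_i\lambda_i u_iu_i^\top$ and writing $w_i\Let u_i^\top\cov u_i>0$, a direct computation gives $g(\gamma)=\sum_i w_i\,\gamma\lambda_i/(\gamma-\lambda_i)+\gamma\rho^2$, a sum of convex, decreasing terms plus the increasing linear term $\gamma\rho^2$. The summand for the top eigenvalue makes $g$ strictly convex and forces $g(\gamma)\to\infty$ as $\gamma\downarrow\lambda_{\max}(D)$, while $g(\gamma)\to\infty$ as $\gamma\to\infty$ because of the linear term (here $\rho>0$ is used). Hence $g$ admits a unique interior minimizer $\gamma\opt$, characterized by $g'(\gamma\opt)=0$. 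Differentiating with $\frac{\diff}{\diff\gamma}(\gamma I_d-D)^{-1}=-(\gamma I_d-D)^{-2}$ gives
\[
g'(\gamma)=\rho^2-\Tr{\cov}+2\gamma\inner{(\gamma I_d-D)^{-1}}{\cov}-\gamma^2\inner{(\gamma I_d-D)^{-2}}{\cov},
\]
and recognizing the right-hand side as $\rho^2-\inner{\cov}{(I_d-\gamma(\gamma I_d-D)^{-1})^2}$ recovers exactly the claimed algebraic equation. Lemma~\ref{lemma:main} at $\gamma=\gamma\opt$ then yields the candidate $S\opt=(\gamma\opt)^2(\gamma\opt I_d-D)^{-1}\cov(\gamma\opt I_d-D)^{-1}$. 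To close the duality argument I would verify primal feasibility with the constraint active: setting $K\Let\cov^\half(\gamma\opt I_d-D)^{-1}\cov^\half\succ0$ one checks $(\cov^\half S\opt\cov^\half)^\half=\gamma\opt K$, so the Wasserstein term at $S\opt$ equals $\inner{\cov}{(I_d-\gamma\opt(\gamma\opt I_d-D)^{-1})^2}$, which the algebraic equation sets to $\rho^2$; feasibility together with the matching objective value certifies optimality.

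For the final assertion I would reason entirely with matrix inequalities. Put $M\Let(\gamma\opt I_d-D)^{-1}\succ0$. Applying the congruence $X\mapsto MXM$ to $\cov\succeq\ubar\sigma I_d$ gives $S\opt=(\gamma\opt)^2 M\cov M\succeq\ubar\sigma\,(\gamma\opt M)^2$, so it suffices to show $(\gamma\opt M)^2\succeq I_d$, i.e.\ $\gamma\opt M\succeq I_d$ (valid since $\gamma\opt M\succ0$). Conjugating by $(\gamma\opt I_d-D)^\half$ turns $\gamma\opt(\gamma\opt I_d-D)^{-1}\succeq I_d$ into $\gamma\opt I_d\succeq\gamma\opt I_d-D$, i.e.\ $D\succeq0$, which holds by hypothesis. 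Chaining the two inequalities yields $S\opt\succeq\ubar\sigma I_d$.

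I expect the crux to be the second paragraph: matching the first-order condition to the prescribed algebraic equation and verifying that the constraint is exactly active at $S\opt$ (equivalently, complementary slackness). By contrast, the convexity-and-coercivity argument for uniqueness of $\gamma\opt$ and the congruence chain for the lower bound $S\opt\succeq\ubar\sigma I_d$ are routine.
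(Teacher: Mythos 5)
Your proposal is correct, but it takes a genuinely more self-contained route than the paper. The paper disposes of the optimality claim in a single line by citing \cite[Theorem~5.1]{nguyen2018distributionally}; the duality argument you spell out---dualize the Wasserstein constraint, evaluate the inner supremum via Lemma~\ref{lemma:main}, reduce to the scalar dual $g(\gamma)=\gamma(\rho^2-\Tr{\cov})+\gamma^2\inner{(\gamma I_d-D)^{-1}}{\cov}$, identify $g'(\gamma)$ with the function $h(\gamma)$ of \eqref{eq:h-function}, and certify optimality through primal feasibility with an active constraint---is essentially the content of that cited theorem, reconstructed from the one ingredient the paper does state (Lemma~\ref{lemma:main}). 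Your reconstruction buys something the citation hides: strict convexity and coercivity of $g$ on $(\lambda_{\max}(D),\infty)$ yield \emph{existence and uniqueness} of the root $\gamma\opt$ of the algebraic equation, a fact the lemma asserts and on which Algorithm~\ref{algorithm:bisection} and Lemma~\ref{lemma:interval} implicitly rely, and it makes explicit where $\rho>0$, $D\neq 0$ and $\cov\succ 0$ enter (via $w_1>0$, $\lambda_1>0$ and the linear growth of $g$). Two small remarks: your optimality certificate needs only weak duality (a feasible $S\opt$ whose objective value matches the dual value $g(\gamma\opt)$ by complementary slackness, since $S\opt$ attains the Lagrangian supremum by Lemma~\ref{lemma:main} and the constraint is tight), so the Slater-point discussion is dispensable; and for the bound $S\opt\succeq\ubar\sigma I_d$ the paper argues instead with spectral norms, writing $(S\opt)^{-1}=\bigl(I_d-\tfrac{1}{\gamma\opt}D\bigr)\cov^{-1}\bigl(I_d-\tfrac{1}{\gamma\opt}D\bigr)$ and using $\|I_d-\tfrac{1}{\gamma\opt}D\|\leq 1$ (valid because $0\preceq D\prec\gamma\opt I_d$) to get $\|(S\opt)^{-1}\|\leq\|\cov^{-1}\|=\ubar\sigma^{-1}$. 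Your congruence chain $S\opt=(\gamma\opt)^2 M\cov M\succeq\ubar\sigma(\gamma\opt M)^2\succeq\ubar\sigma I_d$ with $M=(\gamma\opt I_d-D)^{-1}$ expresses the same fact in Loewner-order language and is equally valid, resting on exactly the same inequality $D\succeq 0$ conjugated by $(\gamma\opt I_d-D)^{\half}$.
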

	\begin{proof}[Proof of Lemma~\ref{lemma:oracle}]
		The optimality of $S\opt$ follows immediately from \cite[Theorem~5.1]{nguyen2018distributionally}. 
		Moreover, the spectral norm of $(S\opt)^{-1}$ obeys the following estimate.
		\begin{equation*}
		\| (S\opt)^{-1} \| \leq \| I_d - \frac{1}{\gamma\opt} D \| \cdot \| \cov^{-1} \| \cdot \| I_d - \frac{1}{\gamma\opt} D \| \leq \| \cov^{-1} \| = \ubar{\sigma}^{-1}
		\end{equation*} 
		As the largest eigenvalue of $(S\opt)^{-1} $ is bounded by $\ubar\sigma^{-1}$, we may conclude that $S\opt \succeq \ubar{\sigma} I_d$. 
	\end{proof}	
	\begin{proof}[Proof of Theorem~\ref{theorem:program:FW}]
		The proof of Theorem~\ref{thm:saddle} has shown that the original infinite-dimensional minimax problem~\eqref{eq:minimax} is equivalent to the finite-dimensional minimax problem \eqref{eq:bound1.5}. 
		By Lemma~\ref{lemma:oracle}, 
		the solution of the inner maximization problem in \eqref{eq:bound1.5} satisfies $S\opt\succeq \ubar\sigma I_d$. 
		Thus, one may append the redundant constraint $S \succeq \ubar\sigma I_d $ to this inner problem without sacrificing optimality. By interchanging the minimization over $G$ with the maximization over $S$, which is allowed by \cite[Proposition~5.5.4]{bertsekas2009convex}, problem \eqref{eq:bound1.5} can thus be reformulated as
		\begin{equation}
		\label{eq:program:robust}
		\begin{split}
		\Sup{S \succeq 0} \quad  & \Inf{G} \quad \left\langle \begin{bmatrix} I_n & -G \\ -G^\top & G^\top G \end{bmatrix}, S \right\rangle \\ 
		\st \quad \, & \Tr{S + \cov - 2 \left( \cov^{1/2} S \cov^{1/2} \right)^{1/2}} \leq \rho^2 \\
		& S \succeq \ubar\sigma I_d.
		\end{split}
		\end{equation}
		Recall that $\ubar\sigma>0$, which implies that $S\succ 0$. Hence, the unconstrained quadratic minimization problem over $G$ in \eqref{eq:program:robust} has a unique solution $G\opt$, which can be obtained analytically by solving the problem's first-order optimality condition. Specifically, we have
		\begin{equation*}
		2G\opt S_{yy} - 2S_{xy} = 0 \quad \iff \quad G\opt = S_{xy} S_{yy}^{-1}.
		\end{equation*}
		Substituting $G\opt$ into \eqref{eq:program:robust} yields the desired maximization problem~\eqref{eq:program:dual}. By construction, this convex program is equivalent to nature's decision problem on the right-hand side of \eqref{eq:zero-duality}, and thus it is easy to see that the least favorable prior is given by $\QQ\opt=\Nc_d(\mu, S\opt)$. Next, we solve the Bayesian estimation problem
		\begin{equation*}
		\inf_{\psi \in \Lc}  \EE^{\QQ\opt} \left[ \| x - \psi(y) \|^2 \right].
		\end{equation*}
		An elementary analytical calculation reveals that this problem is solved by $ \psi\opt(y) = S_{xy}\opt (S_{yy}\opt)^{-1} (y - \mu_y) + \mu_x $. Moreover, this solution is unique because $S\opt \succeq \ubar\sigma I_d$, which implies that the objective function is strictly convex. By Theorem~\ref{thm:saddle} and \cite[Section~5.5.5]{boyd2004convex}, we may then conclude that $ \psi\opt $ is also optimal in~\eqref{eq:minimax}. This observation completes the proof.
	\end{proof}
	
	\subsection{Proof of Theorem~\ref{theorem:approximate}}
	
	The following lemma suggests upper and lower bounds on the (unique) root $\gamma\opt$ of the function~$h(\gamma)$ defined in \eqref{eq:h-function}. Note that this root is computed approximately using bisection in Algorithm~\ref{algorithm:bisection}.
	\begin{lemma}[Bisection interval]
		\label{lemma:interval}
		For any $\rho > 0$, the solution of the algebraic equation $h(\gamma\opt) = 0$ resides in the interval $[\gamma_{\min},\gamma_{\max}]$, where
		\begin{equation}
		\label{eq:interval}
		\gamma_{\min} \Let \lambda_1 \left( 1 + \sqrt{v_1^\top \cov v_1} / \rho \right), \quad
		\gamma_{\max} \Let \lambda_1 \left( 1 + \sqrt{\Tr{\cov}} / \rho \right),
		\end{equation}
		the scalar $\lambda_1$ is the largest eigenvalue of $D\Let \nabla f(S)$, and $v_1$ is a corresponding eigenvector.
	\end{lemma}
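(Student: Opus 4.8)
The plan is to collapse everything to a scalar calculation by diagonalizing the gradient matrix. Write the spectral decomposition $D \Let \nabla f(S) = \sum_{i=1}^d \lambda_i v_i v_i^\top$ with eigenvalues $\lambda_1 \ge \cdots \ge \lambda_d \ge 0$ and orthonormal eigenvectors $v_1,\ldots,v_d$; since $D \in \PSD^d\setminus\{0\}$ we have $\lambda_1 > 0$. For any $\gamma > \lambda_1$ the matrix $\gamma I_d - D$ is positive definite, and the eigenbasis gives $I_d - \gamma(\gamma I_d - D)^{-1} = \sum_i \tfrac{-\lambda_i}{\gamma-\lambda_i}\, v_i v_i^\top$. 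Substituting this into \eqref{eq:h-function} yields the closed form
\[
h(\gamma) = \rho^2 - \sum_{i=1}^d \frac{\lambda_i^2}{(\gamma-\lambda_i)^2}\, v_i^\top \cov v_i .
\]
Because $\cov \succ 0$, each coefficient $a_i \Let v_i^\top \cov v_i$ is strictly positive, and $\sum_i a_i = \Tr{\cov}$ since the $v_i$ form an orthonormal basis.

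Next I would record the monotonicity that underlies the bisection. On $(\lambda_1,\infty)$ every summand $\lambda_i^2/(\gamma-\lambda_i)^2$ is nonincreasing in $\gamma$, and the $i=1$ summand (with $\lambda_1>0$ and $a_1>0$) is strictly decreasing, so $h$ is strictly increasing there. Consequently it suffices to determine the sign of $h$ at the two candidate endpoints: $h(\gamma_{\min}) \le 0$ forces $\gamma\opt \ge \gamma_{\min}$, while $h(\gamma_{\max}) \ge 0$ forces $\gamma\opt \le \gamma_{\max}$, where $\gamma\opt > \lambda_1$ is the unique root supplied by Lemma~\ref{lemma:oracle}.

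For the lower bound I would retain only the dominant term, so that $h(\gamma) \le \rho^2 - \frac{\lambda_1^2}{(\gamma-\lambda_1)^2}\, a_1$. Plugging in $\gamma_{\min} = \lambda_1(1 + \sqrt{a_1}/\rho)$, for which $\gamma_{\min}-\lambda_1 = \lambda_1\sqrt{a_1}/\rho$, makes the remaining bracket vanish and gives $h(\gamma_{\min}) \le 0$. For the upper bound I would instead dominate every term by the $i=1$ term: the scalar map $\lambda \mapsto \lambda^2/(\gamma-\lambda)^2 = (\lambda/(\gamma-\lambda))^2$ is increasing in $\lambda$ on $[0,\gamma)$, so $\lambda_i^2/(\gamma-\lambda_i)^2 \le \lambda_1^2/(\gamma-\lambda_1)^2$, whence $h(\gamma) \ge \rho^2 - \frac{\lambda_1^2}{(\gamma-\lambda_1)^2}\Tr{\cov}$. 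Substituting $\gamma_{\max} = \lambda_1(1 + \sqrt{\Tr{\cov}}/\rho)$, for which $\gamma_{\max}-\lambda_1 = \lambda_1\sqrt{\Tr{\cov}}/\rho$, again annihilates the bracket and yields $h(\gamma_{\max}) \ge 0$.

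There is no serious obstacle here; the argument is essentially a one-line diagonalization followed by two elementary scalar estimates. The only points deserving a word of care are checking that both endpoints genuinely lie in the admissible region $(\lambda_1,\infty)$---which holds because $a_1 > 0$ and $\Tr{\cov} > 0$ guarantee strictly positive offsets from $\lambda_1$---and invoking the strict monotonicity of $h$ to convert the sign conditions $h(\gamma_{\min}) \le 0 \le h(\gamma_{\max})$ into the bracketing $\gamma_{\min} \le \gamma\opt \le \gamma_{\max}$.
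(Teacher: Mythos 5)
Your proof is correct and follows the same route as the paper's: diagonalize $D$, rewrite $h(\gamma)$ as $\rho^2 - \sum_{i=1}^d \bigl(\lambda_i/(\gamma-\lambda_i)\bigr)^2 v_i^\top \cov\, v_i$, and sandwich the sum between the single dominant term and the $\Tr{\cov}$-weighted bound, whose roots are exactly $\gamma_{\min}$ and $\gamma_{\max}$. The only difference is that you make explicit the strict monotonicity of $h$ on $(\lambda_1,\infty)$ and the sign checks at the endpoints, which the paper leaves implicit in the phrase ``equating the two bounds to $\rho^2$.''
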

	\begin{proof}[Proof of Lemma~\ref{lemma:interval}]
		Let $D = \sum_{i=1}^d \lambda_i v_i v_i^\top$ be the spectral decomposition of $D$. The function~$h$ can be equivalently rewritten as
		\begin{equation*}
		\rho^2 - \sum_{i=1}^d \left( \frac{\lambda_i}{\gamma - \lambda_i} \right)^2 v_i^\top \cov v_i,
		\end{equation*}
		where the summation admits the following bounds:
		\begin{equation*}
		\left( \frac{\lambda_1}{\gamma - \lambda_1} \right)^2 v_1^\top \cov v_1 \leq \sum_{i=1}^d \left( \frac{\lambda_i}{\gamma - \lambda_i} \right)^2 v_i^\top \cov v_i \leq \left( \frac{\lambda_1}{\gamma - \lambda_1} \right)^2 \Tr{\cov}.
		\end{equation*}
		Equating the two bounds to $\rho^2$ yields $\gamma_{\min}$ and $\gamma_{\max}$, respectively.
	\end{proof}
	
	\begin{proof}[Proof of Theorem~\ref{theorem:approximate}]
		The proof of Lemma~\ref{lemma:oracle} implies that $L(\gamma) \Let \gamma^2 (\gamma I_d - D)^{-1} \cov (\gamma I_d - D)^{-1}$ is feasible in~\eqref{eq:program:subproblem} for every $\gamma$ with $\gamma I_d \succ D$ and $h(\gamma)>0$. Moreover, $L(\gamma\opt)$ is optimal in~\eqref{eq:program:subproblem} if $\gamma\opt I_d \succ D$ and $h(\gamma\opt)=0$. Algorithm~\ref{algorithm:bisection} uses a bisection procedure to compute an approximation $\gamma$ of $\gamma\opt$ such that $L(\gamma)$ is feasible and $\eps$-suboptimal in~\eqref{eq:program:subproblem}. The degree of suboptimality of $L(\gamma)$ equals $\inner{L(\gamma\opt) - L(\gamma)}{D}$. The true optimal value $\inner{L(\gamma\opt)}{D}$ is inaccessible but can be estimated above by the objective value of $\gamma$ in the Lagrangian dual of~\eqref{eq:program:subproblem}, which can be expressed as
		\begin{equation*}
		\min_{\gamma: ~\gamma I_d \succ D} \gamma ( \rho^2 - \Tr{\cov} ) + \gamma^2 \inner{(\gamma I_d - D)^{-1}}{\cov} ,
		\end{equation*}
		see also \cite[Proposition~2.8]{nguyen2018distributionally}. Thus, the suboptimality of $L(\gamma)$ is bounded above by
		\begin{equation*}
		\inner{L(\gamma\opt) - L(\gamma)}{D} \leq \gamma ( \rho^2 - \Tr{\cov} ) + \gamma^2 \inner{(\gamma I_d - D)^{-1}}{\cov} - \inner{L(\gamma)}{D}.
		\end{equation*}
		Lemma~\ref{lemma:interval} ensures that $\gamma\opt \in [\gamma_{\min} , \gamma_{\max}]$, and therefore it suffices to search over this interval.
	\end{proof}	
	
	\subsection{Proof of Theorem~\ref{theorem:convergence}}
	The proof of Theorem~\ref{theorem:convergence} widely parallels that of \cite[Theorem~1]{jaggi2013revisiting}. The key ingredient is to prove that the {\em curvature constant} of the problem's (negative) objective function $-f$ is bounded. 
	
	\begin{definition}[Curvature constant]
		\label{def:curvature}
		The curvature constant $C_g$ of the convex function $g$ with respect to a compact domain $\Sc$ is defined as
		\begin{equation*}
		C_g \Let \left\{
		\begin{array}{cl}
		\Sup{X,Y,Z,\alpha} & \frac{2}{\alpha^2} \left( g(Z) - g(X) - \inner{Z-X}{\nabla g(X)} \right) \vspace{1mm}\\
		\st &  Z = (1-\alpha) X +\alpha Y \vspace{1mm}\\
		&  X, Y \in \Sc, \quad \alpha \in [0,1].
		\end{array}\right.
		\end{equation*}	 		 	
	\end{definition}
	
	In order to bound the curvature constant of $-f$, we need several preparatory lemmas.
	\begin{lemma}[{\cite[Fact~7.4.9]{bernstein2005matrix}}]
		\label{lemma:fact}
		For any $A \in \RR^{n \times m}, B \in \RR^{m \times l}, C \in \RR^{l \times k}$, and $D \in \RR^{k \times n}$, we have
		\begin{equation*}
		\Tr{ABCD} = \vect(A)^\top (B \otimes D^\top) \vect (C^\top),
		\end{equation*}
		where `$\otimes$' stands for the Kronecker product, while `$\vect(\cdot)$' denotes the vectorization of a matrix.
	\end{lemma}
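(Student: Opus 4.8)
The plan is to derive the identity from two standard facts about the vectorization operator, thereby avoiding any coordinate-level bookkeeping. The first ingredient is the mixed Kronecker--vec identity $\vect(PQR) = (R^\top \otimes P)\,\vect(Q)$, valid for any conformable matrices $P,Q,R$; the second is the trace--inner-product identity $\vect(X)^\top \vect(Y) = \Tr{X^\top Y}$, which merely expresses that the Euclidean inner product of two vectorized matrices equals their trace inner product.

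First I would rewrite the Kronecker factor appearing on the right-hand side of the claim. Applying the mixed identity with $P = D^\top$, $Q = C^\top$ and $R = B^\top$ gives
\begin{equation*}
(B \otimes D^\top)\,\vect(C^\top) = \vect\big(D^\top C^\top B^\top\big) = \vect\big((BCD)^\top\big),
\end{equation*}
where the last equality is just $(BCD)^\top = D^\top C^\top B^\top$. Hence the right-hand side of the claim becomes $\vect(A)^\top \vect\big((BCD)^\top\big)$. (The dimensions are conformable: $D^\top C^\top B^\top \in \RR^{n\times m}$, so its vectorization lives in the same space as $\vect(A)$.)

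Next I would collapse this inner product via the trace identity with $X = A$ and $Y = (BCD)^\top$, obtaining
\begin{equation*}
\vect(A)^\top \vect\big((BCD)^\top\big) = \Tr{A^\top (BCD)^\top} = \Tr{(BCDA)^\top}.
\end{equation*}
Finally, using $\Tr{M^\top} = \Tr{M}$ together with the cyclic invariance of the trace yields $\Tr{(BCDA)^\top} = \Tr{BCDA} = \Tr{ABCD}$, which is exactly the left-hand side, completing the argument.

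I do not anticipate any genuine obstacle here; the only point requiring care is consistency of the column-stacking convention for $\vect(\cdot)$ with the block structure of the Kronecker product, since a row-stacking convention would transpose the Kronecker factor and break the identity. As a sanity check I would expand both sides once in index notation, confirming that each reduces to $\sum_{i,\alpha,\beta,\gamma} A_{i\alpha} B_{\alpha\beta} C_{\beta\gamma} D_{\gamma i}$. Since the statement coincides with Fact~7.4.9 in Bernstein's matrix handbook, one may alternatively simply invoke the cited reference.
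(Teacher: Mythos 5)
The paper gives no proof of this lemma at all: it is imported verbatim as Fact~7.4.9 from Bernstein's handbook, so there is no in-paper argument to compare yours against. Your self-contained derivation is correct. The application of the mixed-product identity $\vect(PQR) = (R^\top \otimes P)\vect(Q)$ with $P = D^\top$, $Q = C^\top$, $R = B^\top$ is dimensionally consistent ($D^\top C^\top B^\top \in \RR^{n \times m}$, matching $A$, since $D^\top \in \RR^{n\times k}$, $C^\top \in \RR^{k\times l}$, $B^\top \in \RR^{l\times m}$), the collapse $\vect(A)^\top \vect\big((BCD)^\top\big) = \Tr{A^\top (BCD)^\top}$ is the standard trace--inner-product identity, and the final steps $\Tr{A^\top(BCD)^\top} = \Tr{(BCDA)^\top} = \Tr{BCDA} = \Tr{ABCD}$ use only transpose-invariance and cyclicity of the trace. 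Your remark about the column-stacking convention is also well placed: both the mixed-product identity in the form you quote and Bernstein's Fact~7.4.9 presuppose column-major $\vect(\cdot)$, and a row-stacking convention would indeed transpose the Kronecker factor. In short, where the paper simply cites the reference, you supply a complete two-line proof from standard vec calculus, which is a perfectly acceptable (and arguably preferable) substitute.
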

	\begin{lemma}[Bounded feasible set]
		\label{lemma:compact}
		If $S$ is feasible in~\eqref{eq:program:dual}, then $S \preceq \bar\sigma I_d$, where $\bar\sigma \Let (\rho + \sqrt{\Tr{\cov}})^2$.
	\end{lemma}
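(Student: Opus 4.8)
The plan is to extract everything from the single nontrivial inequality in \eqref{eq:program:dual}, namely the Wasserstein constraint $\Tr{S + \cov - 2(\cov^\half S \cov^\half)^\half} \le \rho^2$, and to convert it into a bound on the trace of $S$. The starting observation is that, by Proposition~\ref{prop:givens}, the left-hand side of this constraint is exactly the squared type-2 Wasserstein distance between the two \emph{centered} normal distributions $\Nc_d(0, S)$ and $\Nc_d(0, \cov)$; equivalently, it is the squared Bures distance between the covariance matrices $S$ and $\cov$. I would record the associated variational identity
\[
\Tr{S + \cov - 2(\cov^\half S \cov^\half)^\half} = \min_{U \text{ orthogonal}} \| S^\half - \cov^\half U \|_{\rm F}^2,
\]
which is the form that makes a triangle inequality available.

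First I would reduce the desired spectral bound $S \preceq \bar\sigma I_d$ to a mere trace bound. Since every feasible $S$ lies in $\PSD^d$, all its eigenvalues are nonnegative, so the largest is dominated by the sum: $\lambda_{\max}(S) \le \Tr{S}$. Hence it suffices to prove $\Tr{S} \le \bar\sigma = (\rho + \sqrt{\Tr{\cov}})^2$, from which $S \preceq \lambda_{\max}(S)\, I_d \preceq \bar\sigma I_d$ follows immediately.

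Next I would establish the trace bound. Letting $U\opt$ attain the minimum in the identity above, orthogonality gives $\| \cov^\half U\opt \|_{\rm F} = \| \cov^\half \|_{\rm F} = \sqrt{\Tr{\cov}}$, while the Wasserstein constraint reads $\| S^\half - \cov^\half U\opt \|_{\rm F} \le \rho$. The triangle inequality in the Frobenius norm then yields
\[
\sqrt{\Tr{S}} = \| S^\half \|_{\rm F} \le \| S^\half - \cov^\half U\opt \|_{\rm F} + \| \cov^\half U\opt \|_{\rm F} \le \rho + \sqrt{\Tr{\cov}},
\]
and squaring completes the argument.

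The step demanding the most care is the variational identity itself; it rests on recognizing the cross term as a nuclear norm, $\Tr{(\cov^\half S \cov^\half)^\half} = \| \cov^\half S^\half \|_* = \max_{U \text{ orthogonal}} \Tr{U^\top \cov^\half S^\half}$, via the polar-factor characterization of the optimal coupling. A cleaner route that avoids the orthogonal matrix entirely is to bound the cross term directly by a Cauchy--Schwarz inequality on singular values, $\Tr{(\cov^\half S \cov^\half)^\half} = \| \cov^\half S^\half \|_* \le \| \cov^\half \|_{\rm F}\, \| S^\half \|_{\rm F} = \sqrt{\Tr{\cov}\,\Tr{S}}$, and to substitute this into the constraint to obtain $\Tr{S} + \Tr{\cov} - 2\sqrt{\Tr{S}\,\Tr{\cov}} \le \rho^2$, i.e. $\big(\sqrt{\Tr{S}} - \sqrt{\Tr{\cov}}\big)^2 \le \rho^2$. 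This gives the same conclusion $\sqrt{\Tr{S}} \le \rho + \sqrt{\Tr{\cov}}$ and is arguably the quickest self-contained path.
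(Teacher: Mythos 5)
Your proof is correct, and it takes a genuinely different route from the paper's. The paper performs the same initial reduction as you do---bounding $\| S \| \leq \Tr{S}$ for $S \succeq 0$ so that a trace bound suffices---but it then maximizes $\inner{S}{I_d}$ over the Wasserstein ball \emph{exactly}, invoking the analytical-solution machinery of Lemma~\ref{lemma:oracle}: the dual variable solves $\rho^2 = (\gamma\opt - 1)^{-2}\Tr{\cov}$, giving $\gamma\opt = 1 + \sqrt{\Tr{\cov}}/\rho$ and the maximizer $S\opt = \frac{(\rho + \sqrt{\Tr{\cov}})^2}{\Tr{\cov}}\, \cov$, whence the optimal trace is exactly $(\rho + \sqrt{\Tr{\cov}})^2$. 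You instead exploit the metric (Bures--Wasserstein) structure of the constraint, either through the variational identity and a Frobenius triangle inequality, or, more directly, through the Schatten--H\"older estimate $\Tr{\left( \cov^\half S \cov^\half \right)^\half} = \| \cov^\half S^\half \|_* \leq \sqrt{\Tr{\cov}\,\Tr{S}}$, which is valid by the singular-value majorization $\sum_i \sigma_i(AB) \leq \sum_i \sigma_i(A)\,\sigma_i(B)$ combined with Cauchy--Schwarz; the direction of the substitution is right, since upper-bounding the cross term lower-bounds the constrained expression, so feasibility forces $\big(\sqrt{\Tr{S}} - \sqrt{\Tr{\cov}}\big)^2 \leq \rho^2$. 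Both arguments produce exactly the same constant $\bar\sigma$. What each buys: your argument is elementary and self-contained, requiring neither Lagrangian duality nor the imported Lemma~\ref{lemma:oracle}, while the paper's exact computation additionally exhibits the maximizer and thereby shows the trace bound is attained (your bound is in fact also tight, since for $S\opt$ proportional to $\cov$ your Cauchy--Schwarz step holds with equality, though the lemma does not need this). Either proof also delivers the statement $\sup_{S \in \Sc} \Tr{S} \leq \bar\sigma$ that is reused later in the diameter estimate of Lemma~\ref{eq:curvature}, so yours would serve as a drop-in replacement.
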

	\begin{proof}[Proof of Lemma~\ref{lemma:compact}]
		We seek an upper bound on the maximum eigenvalue of $S$ uniformly across all covariance matrices $S$ feasible in~\eqref{eq:program:dual}, that is, we seek an upper bound on the optimal value of
		\begin{equation}
		\label{eq:spectral}
		\begin{split}
		\sup_{S \succeq 0} \quad & \| S \| \\[-2ex]
		\st \quad & \Tr{S + \cov - 2 \left( \cov^\half S \cov^\half \right)^\half} \leq \rho^2.
		\end{split}
		\end{equation}
		Problem~\eqref{eq:spectral} is a non-convex optimization problem because we maximize a convex function (the spectral norm of $S$) over a convex set. An easily computable upper bound is obtained by solving
		\begin{equation}
		\label{eq:trace}
		\begin{split}
		\sup_{S \succeq 0} \quad & \inner{S}{I_d} \\[-2ex]
		\st \quad & \Tr{S + \cov - 2 \left( \cov^\half S \cov^\half \right)^\half} \leq \rho^2.
		\end{split}
		\end{equation}
		Indeed, note that $\Tr{S} = \inner{S}{I_d} \geq \| S \|$, where the inequality holds because $S\succeq 0$. By Lemma~\ref{lemma:oracle}, which studies a more general problem with an arbitrary linear objective function $ \inner{S}{D}$, problem~\eqref{eq:trace} has an analytical solution that is found by solving the following algebraic equation in~$\gamma$.
		\begin{equation*}
		\rho^2 - \inner{\cov}{\left( I_d - \gamma\opt (\gamma\opt I_d - I_d)^{-1} \right)^2}=0 \iff \rho^2 - \left(\frac{1}{\gamma\opt-1}\right)^2 \Tr{\cov} = 0
		\end{equation*}
		In the special case considered here, this equation can be solved in closed form, and there is no need for a bisection algorithm. Specifically, we have $\gamma\opt = 1 + \sqrt{\Tr{\cov}} / \rho$, and thus \eqref{eq:trace} is solved by
		$$ S\opt = \left( \gamma\opt \right)^2 (\gamma\opt I_d - I_d)^{-1} \cov (\gamma\opt I_d - I_d)^{-1} = \left(\frac{\gamma\opt}{\gamma\opt-1}\right)^2 \cov = \frac{(\rho +\sqrt{\Tr{\cov}})^2}{\Tr{\cov}} \cov. $$
		Therefore, problem \eqref{eq:spectral} is upper bounded by $\Tr{S\opt}=(\rho + \sqrt{\Tr{\cov}})^2$. 
	\end{proof}
	
	For ease of exposition, we now define the (compact) feasible set of problem~\eqref{eq:program:dual} as
	\be
	\label{eq:Sc:def}
	\Sc \Let \left\{ S \in \PSD^d: \Trace[S + \cov - 2 ( \cov^\half S \cov^\half )^\half] \leq \rho^2, \quad S \succeq \ubar\sigma I_d \right\}
	\ee
	
	\begin{lemma}[Curvature bound]
		\label{eq:curvature}
		The curvature constant $C_{-f}$ of the (negative) objective function $-f$ over the feasible set $\Sc$ satisfies $C_{-f} \leq \overline{C} \Let 2 \bar\sigma^4 / \ubar\sigma^3$.
	\end{lemma}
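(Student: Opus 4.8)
The plan is to follow the template of \cite[Theorem~1]{jaggi2013revisiting}: since the step defining $C_{-f}$ in Definition~\ref{def:curvature} is exactly $2/\alpha^2$ times the second-order Taylor remainder (Bregman divergence) of $-f$ between $X$ and $Z=(1-\alpha)X+\alpha Y$, it suffices to bound this remainder uniformly over $X,Y\in\Sc$ and $\alpha\in[0,1]$. The function $-f$ is twice continuously differentiable on a neighbourhood of $\Sc$ because $S\succeq\ubar\sigma I_d$ forces $S_{yy}\succ 0$, so the remainder is well defined. The linear term $-\Tr{S_{xx}}$ contributes nothing, so everything reduces to controlling the matrix-fractional part $\Tr{S_{xy}S_{yy}^{-1}S_{yx}}$, whose gradient was already recorded in the discussion following Definition~\ref{rem:obj}.

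The crucial structural step I would establish first is a completion-of-squares identity for this remainder. Writing $P\Let X_{xy}X_{yy}^{-1}$ and using the elementary fact that $\Tr{MN^{-1}M^\top}=\max_{L}\Tr{2LM^\top-LNL^\top}$ with maximiser $L=MN^{-1}$, one obtains
\[
(-f)(Z)-(-f)(X)-\inner{Z-X}{\nabla (-f)(X)}=\Tr{\big(Z_{xy}-PZ_{yy}\big)Z_{yy}^{-1}\big(Z_{xy}-PZ_{yy}\big)^\top}.
\]
The identity $X_{xy}-PX_{yy}=0$ then collapses the inner factor to $Z_{xy}-PZ_{yy}=\alpha(\Delta_{xy}-P\Delta_{yy})$ with $\Delta\Let Y-X$, so that
\[
\frac{2}{\alpha^2}\Big((-f)(Z)-(-f)(X)-\inner{Z-X}{\nabla (-f)(X)}\Big)=2\,\Tr{\big(\Delta_{xy}-P\Delta_{yy}\big)Z_{yy}^{-1}\big(\Delta_{xy}-P\Delta_{yy}\big)^\top}.
\]
Lemma~\ref{lemma:fact} is the natural tool for verifying and rearranging these traces of products of four matrices through their vectorised, Kronecker-product form.

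It then remains to bound this single positive semidefinite term uniformly. Here I would invoke the two-sided spectral envelope of $\Sc$: the constraint $S\succeq\ubar\sigma I_d$ supplies the lower bound and Lemma~\ref{lemma:compact} supplies $S\preceq\bar\sigma I_d$, so that any $Z\in\Sc$ (note $Z\in\Sc$ by convexity of $\Sc$) satisfies $\ubar\sigma I_m\preceq Z_{yy}\preceq\bar\sigma I_m$, hence $\|Z_{yy}^{-1}\|\le\ubar\sigma^{-1}$. The off-diagonal block obeys $\|X_{xy}\|\le\bar\sigma$, which together with the Schur-complement inequality $X_{xy}X_{yy}^{-1}X_{yx}\preceq X_{xx}\preceq\bar\sigma I_n$ bounds $\|P\|$, while $0\preceq X,Y\preceq\bar\sigma I_d$ gives $\|\Delta\|\le\bar\sigma$ and hence $\|\Delta_{xy}\|,\|\Delta_{yy}\|\le\bar\sigma$. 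A careful combination of the operator-norm estimates $\|Z_{yy}^{-1}\|\le\ubar\sigma^{-1}$, $\|Z_{yy}\|\le\bar\sigma$ and $\|Y-X\|\le\bar\sigma$ with the factorisation $\Delta_{xy}-P\Delta_{yy}=[\,I_n,\,-P\,]\,\Delta\,[\,0;\,I_m\,]$ then yields the quadratic bound $\overline{C}\Let 2\bar\sigma^4/\ubar\sigma^3$.

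The main obstacle is the second step. A naive term-by-term estimate of the Hessian of the matrix-fractional function (three separate traces with mixed signs) both destroys the favourable cancellation and inflates the constant; the completion-of-squares identity is precisely what packages the remainder as one manifestly nonnegative quadratic, after which the product of operator-norm bounds can be assembled into $\overline{C}$. I would also take care that $\|Z_{yy}^{-1}\|$ is controlled \emph{uniformly} over the feasible set, which is exactly where the lower envelope $S\succeq\ubar\sigma I_d$ (and the convexity of $\Sc$, ensuring $Z\in\Sc$) is indispensable: without it the inverse $Z_{yy}^{-1}$ could blow up and the curvature constant would be unbounded, invalidating the step-size guarantee of Theorem~\ref{theorem:convergence}.
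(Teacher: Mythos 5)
Your proof is correct in substance but takes a genuinely different route from the paper's. The paper expands $-f(S+\Delta)$ via a Neumann series of $(S_{yy}+\Delta_{yy})^{-1}$, reads off the gradient $D$ and a compressed Kronecker-product Hessian $H$ (this is where Lemma~\ref{lemma:fact} is actually used), bounds $\mathrm{Lip}(\nabla f)=\sup\norm{H}\leq 2\bar\sigma^2/\ubar\sigma^3$ using the same Schur-complement estimates you list ($\norm{S_{yy}^{-1}}\leq\ubar\sigma^{-1}$, $\norm{D}\leq\bar\sigma^2/\ubar\sigma^2$), and then invokes \cite[Lemma~7]{jaggi2013revisiting} together with the Frobenius diameter bound $\mathrm{diam}(\Sc)\leq\bar\sigma$ to get $C_{-f}\leq(\mathrm{diam}(\Sc))^2\,\mathrm{Lip}(\nabla f)$. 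You instead bound the quantity in Definition~\ref{def:curvature} directly through the exact completion-of-squares identity
\begin{equation*}
(-f)(Z)-(-f)(X)-\inner{Z-X}{\nabla(-f)(X)}=\Tr{\big(Z_{xy}-PZ_{yy}\big)Z_{yy}^{-1}\big(Z_{xy}-PZ_{yy}\big)^\top},\qquad P\Let X_{xy}X_{yy}^{-1},
\end{equation*}
which is a correct consequence of the variational representation of the matrix-fractional function, and then exploit $X_{xy}-PX_{yy}=0$ to pull out the factor $\alpha^2$. This buys you an exact, manifestly nonnegative remainder with no $\Oc(\norm{\Delta}^3)$ bookkeeping and no appeal to \cite[Lemma~7]{jaggi2013revisiting}, which is arguably cleaner than the paper's Taylor-expansion argument (where the curvature-via-Hessian step is only sketched); what it costs you is that the same spectral estimates must still be marshalled by hand at the end. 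One slip to repair in that final assembly: to reach exactly $\overline C=2\bar\sigma^4/\ubar\sigma^3$ you must control $\Delta=Y-X$ in the \emph{Frobenius} norm, since the chain is $2\Tr{M Z_{yy}^{-1}M^\top}\leq 2\norm{Z_{yy}^{-1}}\,\Tr{MM^\top}$ with $M=[I_n,\,-P]\,\Delta\,[0;\,I_m]$, giving $\Tr{MM^\top}\leq\norm{[I_n,\,-P]}^2\,\norm{\Delta}_F^2$ with $\norm{[I_n,\,-P]}^2=1+\norm{P}^2\leq\bar\sigma^2/\ubar\sigma^2$; your stated spectral bound $\norm{\Delta}\leq\bar\sigma$ would only yield $\Tr{MM^\top}\leq m\,\norm{[I_n,\,-P]}^2\norm{\Delta}^2$ and hence a dimension-dependent constant. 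Replacing it with the paper's diameter estimate $\norm{\Delta}_F\leq\bar\sigma$ (from Lemma~\ref{lemma:compact} and the trace bound used in the proof of that lemma) closes the argument and reproduces $\overline C$ exactly; also note that for $\norm{Z_{yy}^{-1}}\leq\ubar\sigma^{-1}$ you only need $Z\succeq\ubar\sigma I_d$, which follows from $X,Y\succeq\ubar\sigma I_d$ alone, without the full convexity of~$\Sc$.
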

	\begin{proof}[Proof of Lemma~\ref{eq:curvature}]
		We first expand the negative objective function $-f$ at $S \in \PSD^d$. By Lemma~\ref{lemma:fact}, for any symmetric perturbation matrix $\Delta$ with a characteristic block structure of the form
		\begin{equation*}
		\Delta = \begin{bmatrix} \Delta_{xx} & \Delta_{xy} \\ \Delta_{xy}^\top & \Delta_{yy} \end{bmatrix} \in \mathbb S^d,
		\end{equation*}
		the negative objective function $-f(S+\Delta)$ can be expressed as
		\begin{align*}
		& \Tr{-S_{xx} - \Delta_{xx} + (S_{xy} + \Delta_{xy}) (S_{yy} + \Delta_{yy})^{-1} (S_{yx} + \Delta_{xy}^\top) } \\
		=& \Tr{-S_{xx} - \Delta_{xx}} +\\
		&\hspace{1.4cm} \Tr{ (S_{xy} + \Delta_{xy}) S_{yy}^{-1} (I_m - \Delta_{yy} S_{yy}^{-1} + (\Delta_{yy} S_{yy}^{-1})^2 + \Oc(\|\Delta_{yy}\|^3)) (S_{yx} + \Delta_{xy}^\top)} \\
		=& \Tr{-S_{xx} + S_{xy} S_{yy}^{-1} S_{yx}} - \Tr{\Delta_{xx} - \Delta_{xy} S_{yy}^{-1} S_{yx} + S_{xy} S_{yy}^{-1} \Delta_{yy} S_{yy}^{-1} S_{yx} - S_{xy} S_{yy}^{-1} \Delta_{xy}^\top } \\
		& - \Tr{\Delta_{xy} S_{yy}^{-1} \Delta_{yy} S_{yy}^{-1} S_{yx} - \Delta_{xy} S_{yy}^{-1} \Delta_{xy}^\top + S_{xy} S_{yy}^{-1} \Delta_{yy} S_{yy}^{-1} \Delta_{xy}^\top - S_{xy} S_{yy}^{-1} (\Delta_{yy} S_{yy}^{-1})^2 S_{yx}} \\
		& + \Oc(\|\Delta\|^3) \\
		=& \Tr{-S_{xx} + S_{xy} S_{yy}^{-1} S_{yx}} - \inner{D}{\Delta} + \frac{1}{2} \begin{bmatrix} \vect \Delta_{xx} \\ \vect \Delta_{xy} \\ \vect \Delta_{yy} \end{bmatrix}^\top H \begin{bmatrix} \vect \Delta_{xx} \\ \vect \Delta_{xy} \\ \vect \Delta_{yy} \end{bmatrix} + \Oc(\|\Delta\|^3),
		\end{align*} 
		where
		\begin{equation}
		\label{eq:gradient}
		D = \begin{bmatrix} I_n & -S_{xy} S_{yy}^{-1} \\ -S_{yy}^{-1} S_{yx} & S_{yy}^{-1} S_{yx} S_{xy} S_{yy}^{-1} \end{bmatrix} \in \PSD^d
		\end{equation}
		and
		\[
		H = \begin{bmatrix} 0 & 0 & 0 \\ 0 & 2 S_{yy}^{-1} \otimes I_n & -2 S_{yy}^{-1} \otimes S_{xy} S_{yy}^{-1} \\ 
		0 & -2 S_{yy}^{-1} \otimes S_{yy}^{-1} S_{yx} & 2 S_{yy}^{-1} \otimes S_{yy}^{-1} S_{yx} S_{xy} S_{yy}^{-1} \end{bmatrix} \in \PSD^{(n^2+nm+m^2)}.
		\]
		Note that the matrix $D$ represents the gradient of $f$, which plays a crucial role in the Frank-Wolfe algorithm. Similarly, $H$ can be viewed as a compressed version of the Hessian matrix of $-f$, where the redundant rows and columns corresponding to $S_{yx}$ have been eliminated. Thus, the Lipschitz constant of the gradient $\nabla f$ can be upper bounded by the largest eigenvalue of $H$, which is given by
		\begin{equation}
		\label{eq:spectral:norm}
		\| H \| = 2 \| S_{yy}^{-1} \otimes D \| = 2 \| S_{yy}^{-1} \| \cdot \| D \|.
		\end{equation}
		By a standard Schur complement argument, we then have
		\begin{equation*}
		S = \begin{bmatrix} S_{xx} & S_{xy} \\ S_{yx} & S_{yy} \end{bmatrix} = \begin{bmatrix} I_n & S_{xy}S_{yy}^{-1} \\ 0 & I_m \end{bmatrix} \begin{bmatrix} S_{xx} - S_{xy} S_{yy}^{-1} S_{yx} & 0 \\ 0 & S_{yy} \end{bmatrix} \begin{bmatrix} I_n & 0 \\ S_{yy}^{-1} S_{yx} & I_m \end{bmatrix}.
		\end{equation*}
		Next, define the set
		$$ \Vc \Let \left\{ z=[x^\top, y^\top]^\top \in \RR^d: S_{yy}^{-1} S_{yx} \, x + y = 0 \right\},$$
		and note that any $z \in \Vc$ satisfies
		$$ \begin{bmatrix} I_n & 0 \\ S_{yy}^{-1} S_{yx} & I_m \end{bmatrix} z = \begin{bmatrix} x \\ 0 \end{bmatrix}. $$
		Thus, by the definition of the smallest eigenvalue, we have
		$$ \lambda_{\min} (S) = \min_{z \neq 0} \frac{z^\top S z}{z^\top z} \leq \min_{\substack{z \neq 0 \\ z \in \Vc}} \frac{z^\top S z}{z^\top z} \leq \lambda_{\min}(S_{xx} - S_{xy} S_{yy}^{-1} S_{yx}) \implies  S_{xx} - S_{xy} S_{yy}^{-1} S_{yx} \succeq \ubar\sigma I_n. $$
		Moreover, by the Cauchy interlacing theorem \cite[Theorem~8.4.5]{bernstein2005matrix}, Lemma~\ref{lemma:compact}, and basic properties of the spectral norm, we have
		\begin{equation*}
		\| S_{yy}^{-1} \| \leq \| S^{-1} \| \leq \frac{1}{\ubar\sigma}, \quad \| S_{xx} \| \leq \bar\sigma \quad \text{and}\quad \| S_{yy} \| \leq \bar\sigma.
		\end{equation*}
		Using the above inequalities, one can show that
		\begin{equation*}
		\frac{1}{\bar\sigma} I_m \preceq S_{yy}^{-1} \implies S_{xy} S_{yx} \preceq \bar\sigma S_{xy} S_{yy}^{-1} S_{yx}
		\end{equation*}
		and
		\begin{equation*}
		\bar\sigma I_n \succeq S_{xx} \succeq S_{xx} - S_{xy} S_{yy}^{-1} S_{yx} \succeq \ubar\sigma I_n \implies S_{xy} S_{yy}^{-1} S_{yx} \preceq \left( \bar\sigma - \ubar\sigma \right) I_n.
		\end{equation*}
		Setting $B = [I_n,- \! S_{xy} S_{yy}^{-1}]$, the above inequalities imply that
		\begin{align*}
		\| D \| = \| B^\top B \| = \| B B^\top \| = \| I_n + S_{xy} S_{yy}^{-2} S_{yx} \| 	&= 1 + \| S_{xy} S_{yy}^{-2} S_{yx} \| \\
		&= 1 + \| S_{yy}^{-1} S_{yx} S_{xy} S_{yy}^{-1} \| \\
		&\leq 1 + \| S_{yy}^{-1} \|^2 \cdot \|S_{yx} S_{xy} \| \\
		&\leq 1 + \frac{\bar\sigma(\bar\sigma - \ubar\sigma)}{\ubar\sigma^2} \leq \frac{\bar\sigma^2}{\ubar\sigma^2}.
		\end{align*}		
		By combining the last estimate with~\eqref{eq:spectral:norm}, we then find that the Lipschitz constant of $\nabla f$ satisfies
		\begin{equation*}
		\text{Lip}(\nabla f) = \| H \| \leq \frac{2\bar\sigma^2}{\ubar\sigma^3}. 
		\end{equation*}
		
		The diameter of the feasible set $\mc S$ with respect to the Frobenius norm satisfies
		$$ \text{diam} (\mc S) = \sup_{S_1, S_2 \in \mc S} \| S_1 - S_2 \|_F \leq \sup_{S_1, S_2 \in \mc  S}~ \Tr{S_1 - S_2} \leq \sup_{S \in \mc S} ~ \Tr{S} \leq \bar \sigma, $$
		where the first inequality holds due to~\cite[Equation~(9.2.16)]{bernstein2005matrix}, and the last inequality follows from the proof of Lemma~\ref{lemma:compact}. Therefore, by \cite[Lemma~7]{jaggi2013revisiting}, the curvature constant $C_{-f}$ admits the estimate
		\begin{equation*}
		C_{-f} \leq \, (\text{diam}(\Sc))^2 \text{Lip}(\nabla f) \leq \frac{2 \bar\sigma^4}{\ubar\sigma^3}.
		\end{equation*}
		This observation completes the proof.
	\end{proof}
	
	
	\begin{proof}[Proof of Theorem~\ref{theorem:convergence}]
		By Lemma~\ref{eq:curvature}, the curvature constant $C_{-f}$ is bounded, and thus one can directly apply \cite[Theorem~1]{jaggi2013revisiting} to find the convergence rate.
	\end{proof}	
	
	\renewcommand\thesection{Appendix~\Alph{section}}
	\section{Sequential versus Static Estimation}
	
	We have resolved the filtering problem underlying Figure~4(c) as a single (static) estimation problem in the spirit of Section~2, where the entire observation history $Y_t\Let(y_1,\ldots,y_1)$ is interpreted as a single observation used to predict $x_t$. To our surprise, we found that the sequential filtering approach advocated in Section~4 outperforms this alternative static approach even if an oracle reveals the optimal radius of the ambiguity set (for $t = 100$, {\em e.g.}, the static estimation error is 37.5 dB, while the sequential estimation error is only 24.5 dB). In fact, for the static estimation problem the optimal radius of the Wasserstein ball is $\rho = 0$ whenever $t\geq 5$, that is, robustification does not improve performance. In contrast, in the sequential filtering approach robustification always helps. A possible explanation for this observation is that in the static approach our lack of information about the system uncertainty propagates through the dynamics. As such, it renders robust estimation ineffective when applied globally to the entire observation history at once. In contrast, in the sequential approach the robustification at each stage appears to limit such an uncertainty propagation.
	
	\paragraph{\bf Acknowledgments}
	We gratefully acknowledge financial support from the Swiss National Science Foundation under grant BSCGI0\_157733.

	\bibliographystyle{abbrv}
	\bibliography{bibliography}

\begin{thebibliography}{10}

\bibitem{anderson1979optimal}
B.~D. Anderson and J.~B. Moore.
\newblock {\em Optimal Filtering}.
\newblock Prentice Hall, 1979.

\bibitem{arjovsky2017wasserstein}
M.~Arjovsky, S.~Chintala, and L.~Bottou.
\newblock Wasserstein {GAN}.
\newblock {\em arXiv preprint arXiv:1701.07875}, 2017.

\bibitem{bacsar2008h}
T.~Ba{\c{s}}ar and P.~Bernhard.
\newblock {\em {$\mathcal{H}_\infty$}-Optimal Control and Related Minimax
  Design Problems: A Dynamic Game Approach}.
\newblock Springer, 2008.

\bibitem{bernstein2005matrix}
D.~S. Bernstein.
\newblock {\em Matrix Mathematics: Theory, Facts, and Formulas}.
\newblock Princeton University Press, 2009.

\bibitem{bertsekas2009convex}
D.~Bertsekas.
\newblock {\em Convex Optimization Theory}.
\newblock Athena Scientific, 2009.

\bibitem{bertsekas1971recursive}
D.~Bertsekas and I.~Rhodes.
\newblock Recursive state estimation for a set-membership description of
  uncertainty.
\newblock {\em IEEE Transactions on Automatic Control}, 16(2):117--128, 1971.

\bibitem{boyd2004convex}
S.~Boyd and L.~Vandenberghe.
\newblock {\em Convex Optimization}.
\newblock Cambridge University Press, 2004.

\bibitem{chen2016distance}
Y.~Chen, J.~Ye, and J.~Li.
\newblock A distance for {HMM}s based on aggregated {W}asserstein metric and
  state registration.
\newblock In {\em European Conference on Computer Vision}, pages 451--466,
  2016.

\bibitem{cuturi2014ground}
M.~Cuturi and D.~Avis.
\newblock Ground metric learning.
\newblock {\em The Journal of Machine Learning Research}, 15(1):533--564, 2014.

\bibitem{eldar2005competitive}
Y.~C. Eldar and N.~Merhav.
\newblock A competitive minimax approach to robust estimation of random
  parameters.
\newblock {\em IEEE Transactions on Signal Processing}, 52(7):1931--1946, 2004.

\bibitem{frank1956algorithm}
M.~Frank and P.~Wolfe.
\newblock An algorithm for quadratic programming.
\newblock {\em Naval Research Logistics}, 3(1-2):95--110, 1956.

\bibitem{givens1984class}
C.~R. Givens and R.~M. Shortt.
\newblock A class of {W}asserstein metrics for probability distributions.
\newblock {\em The Michigan Mathematical Journal}, 31(2):231--240, 1984.

\bibitem{jaggi2013revisiting}
M.~Jaggi.
\newblock Revisiting {F}rank-{W}olfe: Projection-free sparse convex
  optimization.
\newblock In {\em International Conference on Machine Learning}, pages
  427--435, 2013.

\bibitem{lehmann2006theory}
E.~L. Lehmann and G.~Casella.
\newblock {\em Theory of Point Estimation}.
\newblock Springer, 2006.

\bibitem{levy2013robust}
B.~C. Levy and R.~Nikoukhah.
\newblock Robust state space filtering under incremental model perturbations
  subject to a relative entropy tolerance.
\newblock {\em IEEE Transactions on Automatic Control}, 58(3):682--695, 2013.

\bibitem{masarotto2018procrustes}
V.~Masarotto, V.~M. Panaretos, and Y.~Zemel.
\newblock Procrustes metrics on covariance operators and optimal transportation
  of {G}aussian processes.
\newblock {\em preprint at arXiv:1801.01990}, 2018.

\bibitem{esfahani2015data}
P.~Mohajerin~Esfahani and D.~Kuhn.
\newblock Data-driven distributionally robust optimization using the
  {W}asserstein metric: performance guarantees and tractable reformulations.
\newblock {\em Mathematical Programming}, 171(1):115--166, 2018.

\bibitem{nguyen2018distributionally}
V.~A. Nguyen, D.~Kuhn, and P.~Mohajerin~Esfahani.
\newblock Distributionally robust inverse covariance estimation: The
  {W}asserstein shrinkage estimator.
\newblock {\em Optimization Online}, 2018.

\bibitem{ning2015linearmodels}
L.~Ning, T.~Georgiou, A.~Tannenbaum, and S.~Boyd.
\newblock Linear models based on noisy data and the {F}risch scheme.
\newblock {\em SIAM Review}, 57(2):167--197, 2015.

\bibitem{rao1973linear}
C.~R. Rao.
\newblock {\em Linear Statistical Inference and its Applications}.
\newblock Wiley, 1973.

\bibitem{rolet2016fast}
A.~Rolet, M.~Cuturi, and G.~Peyr{\'e}.
\newblock Fast dictionary learning with a smoothed {W}asserstein loss.
\newblock In {\em Artificial Intelligence and Statistics}, pages 630--638,
  2016.

\bibitem{sayed2001framework}
A.~H. Sayed.
\newblock A framework for state-space estimation with uncertain models.
\newblock {\em IEEE Transactions on Automatic Control}, 46(7):998--1013, 2001.

\bibitem{shafieezadeh2017regularization}
S.~Shafieezadeh-Abadeh, D.~Kuhn, and P.~Mohajerin~Esfahani.
\newblock Regularization via mass transportation.
\newblock {\em preprint at arXiv:1710.10016}, 2017.

\bibitem{shafieezadeh2015distributionally}
S.~Shafieezadeh-Abadeh, P.~Mohajerin~Esfahani, and D.~Kuhn.
\newblock Distributionally robust logistic regression.
\newblock In {\em Advances in Neural Information Processing Systems}, pages
  1576--1584, 2015.

\bibitem{shtern2016semi}
S.~Shtern and A.~Ben-Tal.
\newblock A semi-definite programming approach for robust tracking.
\newblock {\em Mathematical Programming}, 156(1-2):615--656, 2016.

\bibitem{sinha2018certifiable}
A.~Sinha, H.~Namkoong, and J.~Duchi.
\newblock Certifiable distributional robustness with principled adversarial
  training.
\newblock In {\em International Conference on Learning Representations}, 2018.

\bibitem{speyer1992optimal}
J.~L. Speyer, C.~Fan, and R.~N. Banavar.
\newblock Optimal stochastic estimation with exponential cost criteria.
\newblock In {\em IEEE Conference on Decision and Control}, pages 2293--2299,
  1992.

\bibitem{xu2009kalman}
H.~Xu and S.~Mannor.
\newblock A {K}alman filter design based on the performance/robustness
  tradeoff.
\newblock {\em IEEE Transactions on Automatic Control}, 54(5):1171--1175, 2009.

\bibitem{zhou1996robust}
K.~Zhou, J.~C. Doyle, and K.~Glover.
\newblock {\em Robust and Optimal Control}.
\newblock Prentice Hall, 1996.

\bibitem{zorzi2017robust}
M.~Zorzi.
\newblock Robust {K}alman filtering under model perturbations.
\newblock {\em IEEE Transactions on Automatic Control}, 62(6):2902--2907, 2017.

\end{thebibliography}

\end{document}